\newtheorem{theorem}{Theorem}[section]
\newtheorem{lemma}[theorem]{Lemma}
\newtheorem{corollary}[theorem]{Corollary}
\newtheorem{proposition}[theorem]{Proposition}
\newtheorem{conjecture}[theorem]{Conjecture}
\renewcommand{\leq}{\leqslant}
\renewcommand{\geq}{\geqslant}
\theoremstyle{definition}
\theoremstyle{definition}
\newtheorem{remark}[theorem]{Remark}
\numberwithin{equation}{section}
\newcommand{\ve}{\varepsilon}
\newcommand{\vp}{\varphi}
\newcommand{\mn}{\sqrt{-1}}
\newcommand{\ov}[1]{\overline{#1}}
\newcommand{\de}{\partial}
\newcommand{\ddbar}{\sqrt{-1} \partial \overline{\partial}}
\newcommand{\ti}[1]{\tilde{#1}}
\renewcommand{\hom}{\mathrm{Hom}}
\numberwithin{equation}{section} \numberwithin{figure}{section}
\author{Valentino Tosatti}
\address{Department of Mathematics, Northwestern University, 2033 Sheridan Road, Evanston, IL 60208}
\email{tosatti@math.northwestern.edu}
\author{Yuguang Zhang}
\address{Yau Mathematical Sciences Center,  Tsinghua University,  Beijing 100084, P.R.China.}
\address{Current address: Department of Mathematical Sciences, University of Bath, Bath, BA2 7AY, UK.}
\email{yuguangzhang76@yahoo.com}
\title{Collapsing Hyperk\"ahler manifolds}
\begin{document}

\begin{abstract}
Given a projective hyperk\"ahler manifold with a holomorphic Lagrangian fibration, we prove that hyperk\"ahler metrics with volume of the torus fibers shrinking to zero collapse in the Gromov-Hausdorff sense (and smoothly away from the singular fibers) to a compact metric space which is a half-dimensional special K\"ahler manifold outside a singular set of real Hausdorff codimension $2$, and is homeomorphic to the base projective space.
\end{abstract}

\maketitle

\section{Introduction}
Let $M^m$ be a compact Calabi-Yau manifold, which for us is a compact K\"ahler manifold $M^m$ with $c_1(M)=0$ in $H^2(M,\mathbb{R})$. Yau's Theorem \cite{Ya} shows that given any K\"ahler class $[\alpha]$ on $M$ we can find a unique representative $\omega$ of $[\alpha]$ which is a {\em Ricci-flat K\"ahler metric}. The basic problem that we study in this paper is to understand the limiting behavior of such Ricci-flat metrics if we degenerate the class $[\alpha]$. More precisely, we fix a class $[\alpha_0]$ on the boundary of the K\"ahler cone and for $0<t\leq 1$ we let $\ti{\omega}_t$ be the unique Ricci-flat K\"ahler metric in the class $[\alpha_0]+t[\omega_M]$, where $\omega_M$ is a fixed Ricci-flat K\"ahler metric on $M$, and we wish to understand the behavior of $(M,\ti{\omega}_t)$ as $t\to 0$. The metrics $\ti{\omega}_t$ satisfy the equation
\begin{equation}\label{eq1}  \tilde{\omega}_{t}^{m}=c_{t} t^{m-n}\omega_M^m,\end{equation}
for some explicit constants $c_t$ which approach a positive constant as $t\to 0$. Up to scaling the whole setup, we may assume without loss of generality that $c_t \rightarrow 1$ when $t\rightarrow 0$.

This question has been extensively studied in the literature, the most relevant works being \cite{GW,To0,To1,GTZ,GTZ2,HT,TWY,TZ2,CT,So}, see also the surveys \cite{To2,To3,Zh}. In particular, decisive results in the {\em non-collapsing} case when $\int_M\alpha_0^n>0$ have been obtained in \cite{To0,CT,So}. In this paper we consider the more challenging {\em collapsing} case when $\int_M\alpha_0^n=0$, and we will always assume that $[\alpha_0]=f^*[\omega_N]$ where $(N^n,\omega_N)$ is a compact K\"ahler manifold with $0<n<m$ and $f:M\to N$ is  a holomorphic surjective map with connected fibers (i.e. a fiber space). This is the same setup as in \cite{To1,GTZ,GTZ2,HT,TWY,TZ2}, and as explained there, in this case there are proper analytic subvarieties $S'\subset N$ and $S=f^{-1}(S')\subset M$ such that $f:M\backslash S\to N_0:=N\backslash S'$ is a proper submersion with fibers $M_{y}=f^{-1}(y), y\in N\backslash S',$ smooth Calabi-Yau $(m-n)$-folds.

In \cite{TWY}, building upon the earlier \cite{To1}, it is shown that there is a K\"ahler metric $\omega$ on $N_{0}$ such that as $t\to 0$ the metrics $\tilde{\omega}_{t}$  converges  to $f^{*}\omega$ uniformly on compact subsets  of $M\backslash S$, and $\omega$ satisfies
\begin{equation}\label{wp}
{\rm Ric}(\omega) =\omega_{WP}\geq 0,
\end{equation}
on $N_{0}$,  where $\omega_{WP}$ is a {\em Weil-Petersson form} which measures the variation of the complex structures of the fibers $M_y$ (see e.g. \cite{To1,ST}). This is improved to smooth convergence on compact subsets of $M\backslash S$ in \cite{GTZ,HT,TZ2} when the fibers $M_y$ are tori (or finite \'etale quotients of tori). Explicit estimates are also obtained for $\ti{\omega}_t$ near $S$, but these blow up very fast near $S$.

Our main concern is understanding the possible collapsed Gromov-Hausdorff limits of $(M,\ti{\omega}_t)$ as $t\to 0$, and their singularities. In this regard, we have the following conjecture (see \cite[Question 4.4]{To2} \cite[Question 6]{To3}), which is motivated by an analogous conjecture by Gross-Wilson \cite{GW}, Kontsevich-Soibelman \cite{KS,KS2} and Todorov \cite{Man} for collapsed limits of Ricci-flat K\"ahler metrics on Calabi-Yau manifolds near a large complex structure limit:
 \begin{conjecture}\label{con1}
 If   $(X, d_{X})$  denotes the  metric completion of $(N_{0}, \omega)$, and $S_{X}=X\backslash N_{0}$, then
\begin{itemize}
\item[(a)] $(X, d_{X})$ is a compact length metric space and $S_X$ has real Hausdorff codimension at least $2$.
\item[(b)] We have that $$(M, \tilde{\omega}_{t}) \stackrel{d_{GH}}\longrightarrow  (X, d_{X}),$$ when $t \to 0.$
\item[(c)] $X$ is homeomorphic to $N$.
\end{itemize}
\end{conjecture}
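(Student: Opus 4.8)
The plan is to prove Conjecture~\ref{con1} under the standing hypotheses of the paper ($M^{2n}$ projective hyperk\"ahler, $f\colon M\to N$ a holomorphic Lagrangian fibration) by reducing all three assertions to a precise description of two collapsing geometries near the singular fibres: the limiting metric $\omega$ near $S'$, and the Ricci-flat metrics $\ti{\omega}_{t}$ near $S$. Two features of the hyperk\"ahler setting enter at the outset. First, by the theorems of Matsushita and Hwang the base is $N\cong\PP^{n}$ and the discriminant $S'\subset\PP^{n}$ is a hypersurface, so $S'$ and $S=f^{-1}(S')$ have complex, hence real, codimension~$2$, and along the smooth locus of $S'$ the abelian fibres undergo the simplest semistable degeneration (one vanishing cycle), so that locally there the picture is modelled on an elliptic fibration with a nodal fibre times a torus. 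Second, the weight-one variation of Hodge structures carried by $\{M_{y}\}_{y\in N_{0}}$ together with the holomorphic symplectic form makes $f\colon M\setminus S\to N_{0}$ an algebraic completely integrable system, so $(N_{0},\omega,J)$ is a special K\"ahler manifold; this already identifies $N_{0}=X\setminus S_{X}$ as a half-dimensional special K\"ahler manifold, with $\Ric(\omega)=\omega_{WP}\geq0$ as recorded in \eqref{wp}.

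\emph{Step 1: the metric completion.} Here I would analyse $\omega$ transversally to $S'$. Along the smooth locus of $S'$ the local monodromy is a single Picard--Lefschetz transvection, and the nilpotent orbit theorem applied to the period map of $f$ (equivalently, the known asymptotics of the Weil--Petersson metric) should give that in a transverse holomorphic coordinate $u$ the metric $\omega$ is comparable, up to bounded factors, to $(-\log|u|^{2})\,\ddbar|u|^{2}$ plus a term comparable to $\omega_{N}$ in the directions tangent to $S'$; along the deeper strata of $S'$ one argues similarly using the monodromy weight filtration. Since $\int_{0}\sqrt{-\log r}\,dr<\infty$, this shows $\omega$ has finite total volume and that the transverse distance to $S'$ is finite, so $(N_{0},\omega)$ has finite diameter and its metric completion $(X,d_{X})$ is compact; as the completion of a length space is a length space, $(X,d_{X})$ is a compact length space. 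The same local description shows that $d_{X}$ restricted to $S_{X}$ is, up to the logarithmic factor, bi-Lipschitz to $d_{\omega_{N}}$ restricted to $S'$, so $S_{X}$ has real Hausdorff codimension at least~$2$. This establishes part~(a).

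\emph{Step 2: uniform metric control near the singular fibres.} I expect this to be the main obstacle. The soft input is Yau's Schwarz lemma: on the compact $M$ one has $\tr{\ti{\omega}_{t}}{f^{*}\omega_{N}}\leq C$, hence $f^{*}\omega_{N}\leq C\,\ti{\omega}_{t}$, so $d_{\ti{\omega}_{t}}(x_{1},x_{2})\geq c\,d_{\omega_{N}}(f(x_{1}),f(x_{2}))$ and, letting $t\to0$ on $M\setminus S$, also $\omega\geq c\,\omega_{N}$ on $N_{0}$. The hard part is to control distances from above near $S$, even though the metric tensors $\ti{\omega}_{t}$ blow up there. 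For this I would construct, over a small neighbourhood of each point of $S'$, a semi-flat local model for $\ti{\omega}_{t}$ built from the holomorphic symplectic form and the nearby period data --- along the smooth locus of $S'$ a suitably rescaled Ooguri--Vafa metric times a flat torus factor --- and show, by comparison with this model and the a priori estimates of \cite{To1,TWY,GTZ,HT,TZ2} (for instance via a blow-up argument), that $\ti{\omega}_{t}$ is close to the model in a scale-invariant sense. From the model one reads off the two estimates everything rests on: the fibres collapse uniformly, i.e.\ the $\ti{\omega}_{t}$-diameter of $M_{y}$ tends to $0$ uniformly in $y\in N$ (including $y\in S'$); and distances do not blow up near $S$, i.e.\ the $\ti{\omega}_{t}$-distance between $f^{-1}(y')$ and $f^{-1}(y)$ tends to $0$ as $y'\to y\in S'$, uniformly in $t$. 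Together with Step~1 these also give a uniform diameter bound for $(M,\ti{\omega}_{t})$. The reason this is the obstacle is that the works cited in the excerpt supply only the \emph{unquantified} pointwise convergence $\ti{\omega}_{t}\to f^{*}\omega$ away from $S$; upgrading it to uniform metric control up to $S$ is exactly where the extra rigidity of the hyperk\"ahler geometry (the twistor family, the symplectic form, the Ooguri--Vafa-type local models) has to be exploited, and without it the region over $S'$ could contribute to the Gromov--Hausdorff limit in an uncontrolled way.

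\emph{Step 3: Gromov--Hausdorff convergence and the homeomorphism.} On $N$ I would consider the quotient pseudometrics $d_{t}(y_{1},y_{2})=\inf\{\,d_{\ti{\omega}_{t}}(x_{1},x_{2}):f(x_{i})=y_{i}\,\}$; by the Schwarz bound each $d_{t}$ is a genuine metric with $d_{t}\geq c\,d_{\omega_{N}}$. Combining the smooth convergence $\ti{\omega}_{t}\to f^{*}\omega$ on compact subsets of $M\setminus S$ (from \cite{GTZ,HT,TZ2}) with the uniform fibre collapse gives $d_{t}\to d_{\omega}$ uniformly on compact subsets of $N_{0}\times N_{0}$, while the near-$S$ estimates of Step~2 show the $d_{t}$ are equicontinuous on $N\times N$ for the manifold topology; hence $d_{t}$ converges to a metric $d_{\infty}$ on $N$ that agrees with $d_{\omega}$ on $N_{0}$, induces the manifold topology of $N$, and is complete. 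Then $(N,d_{\infty})$ is a compact metric space containing $(N_{0},d_{\omega})$ as a dense subset, hence is isometric to its completion $(X,d_{X})$; in particular $X$ is homeomorphic to $N=\PP^{n}$, which is part~(c). Finally, by construction of $d_{\infty}$ and the uniform fibre collapse, the maps $f\colon(M,\ti{\omega}_{t})\to(N,d_{\infty})$ are $\ve_{t}$-Gromov--Hausdorff approximations with $\ve_{t}\to0$, which gives part~(b).
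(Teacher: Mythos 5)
Your Step 2 is where the proposal breaks down, and it is not a repairable technicality: it is an unproven program, and a far harder one than what the paper actually needs. You propose to control $\ti{\omega}_t$ itself near the singular fibers $S$, uniformly in $t$, by building Ooguri--Vafa-type semi-flat local models and proving scale-invariant closeness via a blow-up argument. No such estimates are known in this generality; this is precisely the kind of refined asymptotics that is only available (conditionally, and in a very special case: $\dim M=3$ with nodal $K3$ fibers) in \cite{Li}, and the known bounds for $\ti{\omega}_t$ near $S$ blow up too fast to be useful. Moreover your local model rests on the claim that over the smooth locus of $S'$ the degeneration is a single Picard--Lefschetz transvection (``one vanishing cycle''); this is false in general for Lagrangian fibrations, where the local monodromy is only quasi-unipotent, which is why the paper must introduce the orbifold orders $m_i$, pass to a log resolution with simple normal crossings, and apply Schmid's nilpotent orbit theorem \cite{Sc} after a branched base change to get the logarithmic bounds on the period matrix (Theorem \ref{prop1}). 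Your Step 3 then inherits the gap: the asserted equicontinuity of the quotient pseudometrics $d_t$ on $N\times N$, and even the convergence $d_t\to d_\omega$ on compact subsets of $N_0\times N_0$ (minimizing paths could a priori shortcut through the region over $S'$), both require exactly the uniform near-$S$ control you have not established.

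The paper's route deliberately avoids any uniform estimate on $\ti{\omega}_t$ near $S$. The only analytic input near the discriminant is an \emph{upper} bound for the limit metric $\omega$ on $N_0$ (after the orbifold base change, with a logarithmic factor), proved by Hodge-theoretic degeneration arguments. Parts (a) and (b) are then obtained by a soft argument: subsequential Gromov--Hausdorff limits $(X,d_X)$ exist and are already known to contain $(N_0,\omega)$ as a dense open subset via a local isometry $\psi$; the bound on $\omega$ gives small $d_X$-diameters of neighborhoods of $S_X$ (hence compactness of the completion and Hausdorff codimension $2$), and Cheeger--Colding theory (\cite[Theorem 3.7]{CC2}, geodesics avoiding sets that are small in codimension one) upgrades $\psi$ to a global isometry, identifying every subsequential limit with the metric completion --- this is the step that replaces your unproven ``no shortcuts through $S$'' control. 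Part (c) is likewise not proved via equicontinuity of $d_t$: one builds a Lipschitz surjection $h:X\to N$ from the Schwarz lemma, extends $\psi\circ\pi$ to a continuous surjection $p:\ti{N}\to X$, and shows $p$ factors through $\pi$ using the additional statement in Theorem \ref{prop1} that the tangential coefficients $g_{i\ov{j}}$, $i,j>k$, extend continuously, together with the fact that the local potential is pulled back from $N$, so the limit metric degenerates along the fibers of $\pi$. So while your Step 1 gestures at the right Hodge-theoretic mechanism for bounding $\omega$, the overall architecture you propose (uniform geometric control of $\ti{\omega}_t$ up to $S$, then quotient metrics) is not the paper's and, as written, contains an essential missing proof at its center.
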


This conjecture was proved by Gross-Wilson \cite{GW} when $f:M\to N$ is an elliptic fibration of $K3$ surfaces with only $I_1$ singular fibers. In our earlier work with Gross \cite{GTZ2} we proved Conjecture \ref{con1} completely in the case when $\dim N=1$. Very recently, conditionally to a certain H\"older estimate for solutions of a family of Monge-Amp\`ere equations, a new proof was obtained in \cite{Li} when $\dim M=3,\dim N=1,$ the generic fibers $M_y$ are $K3$ surfaces and the singular fibers are nodal $K3$ surfaces, which also gives better estimates near and on the singular fibers.

For bases $N$ of general dimension $n$, the only known partial result towards Conjecture \ref{con1} is the one proved in \cite{TWY,GTZ}: if $(X,d_X)$ is the Gromov-Hausdorff limit of a sequence $(M,\ti{\omega}_{t_i}), t_i\to 0$ (such limits always exist up to passing to subsequences), then there is a homeomorphism $\psi:N_0\to X_0$ onto a dense open subset $X_0\subset X$ such that $\psi:(N_0,\omega)\to (X_0,d_X|_{X_0})$ is a local isometry.

Our main result is the following:

\begin{theorem}\label{theorem1}
Conjecture \ref{con1} holds when $M$ is a projective hyperk\"ahler manifold.
\end{theorem}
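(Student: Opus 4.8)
The plan is to exploit the special structure available when $M$ is a projective hyperk\"ahler manifold admitting a holomorphic Lagrangian fibration $f:M\to N$. By a theorem of Matsushita, in this case $N$ is necessarily $\mathbb{P}^n$ (when $b_2>3$; in general a Fano variety with the rational cohomology of $\mathbb{P}^n$, and by recent work it is $\mathbb{P}^n$), and the smooth fibers $M_y$ are $n$-dimensional complex tori which are in fact abelian varieties, with the fibration carrying a natural structure of a (completed) algebraic integrable system. The key extra input compared with the general fiber-space setting is that the base $N_0=N\setminus S'$ carries a \emph{special K\"ahler structure}: the period map of the family of intermediate Jacobians endows $N_0$ with a flat torsion-free symplectic connection compatible with $\omega$, and the limiting metric $\omega$ from \eqref{wp} is precisely the special K\"ahler metric. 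I would first recall/establish this special K\"ahler description of $(N_0,\omega)$, which in particular gives local affine coordinates and a local potential, and gives a concrete description of the torus fibers as $\mathbb{R}^{2n}/\Lambda$ with $\Lambda$ a lattice of affine-linearly-varying periods. This is where one gets genuine control that is unavailable for general Calabi-Yau fibers.

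Next I would prove part (a): that the metric completion $(X,d_X)$ of $(N_0,\omega)$ is compact and $S_X=X\setminus N_0$ has Hausdorff codimension at least $2$. The idea is that near a point of $S'$, the discriminant locus of the integrable system, the monodromy of the period map is quasi-unipotent and the special K\"ahler metric has a controlled singularity — one should show $\omega$ has at worst a conical/cuspidal singularity transverse to $S'$ of the type that keeps finite distance and codimension $\geq 2$, using the fact that $\omega_{WP}\geq 0$ forces $\mathrm{Ric}(\omega)\geq 0$ and hence (by a Bishop-Gromov/volume-comparison argument, or a direct estimate on the Monge-Amp\`ere equation satisfied by $\omega$) a uniform diameter bound and a volume lower bound. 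Compactness of $(X,d_X)$ then follows by completeness plus total boundedness. The codimension-$2$ statement should be extracted from the local model of the special K\"ahler metric near the normal-crossing discriminant together with a capacity/Minkowski-content estimate; here one can also use that $S'\subset\mathbb{P}^n$ is a hypersurface (complex codimension $1$, hence real codimension $2$) and that the metric does not degenerate badly enough to decrease this.

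For part (b), the Gromov-Hausdorff convergence, the strategy is to upgrade the known local-isometry statement $\psi:(N_0,\omega)\to(X_0,d_X|_{X_0})$ of \cite{TWY,GTZ} — valid for any subsequential GH limit — to a genuine convergence $(M,\ti\omega_t)\to(X,d_X)$. The two things to prove are: (i) no collapsing or distance-distortion happens in the fiber directions, i.e. diameters of the torus fibers $(M_y,\ti\omega_t|_{M_y})$ go to zero uniformly (away from $S$ this follows from the smooth convergence $\ti\omega_t\to f^*\omega$ in the torus-fiber case of \cite{GTZ,HT,TZ2}, so the real work is a uniform diameter bound near $S$); and (ii) no "extra" points appear in the limit, i.e. $X_0$ is all of $X$ up to the codimension-$\geq 2$ set $S_X$, which is exactly what part (a) provides. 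For (i) near $S$ I would use the semi-flat approximation: on $M\setminus S$ the Ricci-flat metrics $\ti\omega_t$ are exponentially close (in $t$) to explicit semi-flat metrics built from the special K\"ahler data, whose fiber diameters are $O(\sqrt t)$; near $S$ one controls the fiber diameters by comparing with a reference metric and using that the fiber class has volume $\sim t^{n}$ together with an isoperimetric/Bishop-type argument, or by the gluing estimates of \cite{GTZ2} extended to higher-dimensional bases. Then a standard argument (as in \cite[Ch.\ 7]{...}, i.e. constructing approximate $\varepsilon$-isometries $M\to X$ via $\psi\circ f$) gives $d_{GH}((M,\ti\omega_t),(X,d_X))\to 0$ along every sequence, hence unconditionally.

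Finally, part (c): $X$ is homeomorphic to $N=\mathbb{P}^n$. Since $\psi:N_0\to X_0$ is a homeomorphism onto a dense open set and $X\setminus X_0=S_X$ is closed of codimension $\geq 2$, one wants to show $\psi$ extends to a homeomorphism $N\to X$. The extension across $S'$ should be constructed by showing that points of $S'$ have small metric neighborhoods in $N_0$ whose completions are small in $X$ — i.e. that the identity map of $N_0$ is uniformly continuous from the $\omega$-distance to (a smooth reference distance on) $N$ near $S'$, and conversely — so that Cauchy sequences in $(N_0,\omega)$ converging in $X$ correspond bijectively and continuously to points of $S'$. Concretely one bounds $d_\omega$ above by a constant times a smooth distance (from $\mathrm{Ric}(\omega)\geq 0$ and the diameter bound, or directly from the K\"ahler potential being bounded) and below by using the capacity estimate near $S'$; putting these together identifies $S_X$ with $S'$ and gives the homeomorphism. \emph{The main obstacle I expect is part (i) of (b): obtaining a uniform (in $t$) upper bound on the diameters of the torus fibers over points near $S$, where all the previously known estimates for $\ti\omega_t$ blow up.} This is exactly the point that was hard already for $\dim N=1$ in \cite{GTZ2}, and handling it for a general (normal-crossing) discriminant locus $S'\subset\mathbb{P}^n$ — presumably via a careful local analysis of the Lagrangian fibration near its singular fibers, the structure theory of degenerations of abelian varieties, and the semi-flat/gluing approximation — is where the bulk of the technical work will lie; the special K\"ahler and algebraic-integrable-system structure is what makes this tractable here in contrast to the general Calabi-Yau fiber case.
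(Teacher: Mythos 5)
The central gap is in parts (a) and (b). Your control of the metric completion rests on $\mathrm{Ric}(\omega)=\omega_{WP}\geq 0$ together with a Bishop--Gromov/volume-comparison argument, but nonnegative Ricci curvature on the \emph{incomplete} manifold $(N_0,\omega)$ gives neither a diameter bound nor any bound on the Hausdorff measure of $S_X$ with respect to $d_X$; the whole difficulty is a quantitative upper bound for $\omega$ near $S'$, which you gesture at (quasi-unipotent monodromy, controlled singularity) but never actually produce. The paper obtains exactly this: after a log resolution and a branched cover $q$ that makes the monodromy unipotent with $(T_\gamma-I)^2=0$, Schmid's nilpotent orbit theorem \cite{Sc} gives $Z(t)=Q(t)+\sum_i\frac{\log t_i}{2\pi\sqrt{-1}}\Delta_d\eta_i$ with $Q$ holomorphic, whence the coefficient bound $|g_{i\bar j}|\leq C(1-\ve(i)\log|t_i|-\ve(j)\log|t_j|)$ of Theorem \ref{prop1}, which implies \eqref{to0} and feeds into Theorem \ref{main}. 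More seriously, for part (b) you reduce to a uniform bound on the $\ti\omega_t$-diameters of the torus fibers near $S$, and you acknowledge you cannot prove it; this bound is indeed not available (all known estimates for $\ti\omega_t$ blow up near $S$, and even \cite{GTZ2} in $\dim N=1$ did not prove it). The paper's route avoids it entirely: it takes an arbitrary subsequential Gromov--Hausdorff limit, uses the dense locally isometric image of $(N_0,\omega)$ from \cite{TWY,GTZ}, covers $S_X$ by sets whose $d_X$-diameters are estimated via \eqref{to0} and the length-space property (giving compactness and $\dim_{\mathcal H}S_X\leq 2n-2$), and then upgrades the local isometry to a global one by Cheeger--Colding theory (identifying the renormalized limit measure with $\mathcal H^{2n}_{d_X}$ and applying \cite[Theorem 3.7]{CC2} to find almost-minimizing curves inside $X_0$); uniqueness of the limit follows because every subsequential limit is the same metric completion. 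No estimate on $\ti\omega_t$ over the singular fibers is ever needed, so the step you flag as the main obstacle is a detour into what is still an open problem.

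For part (c) your extension argument also does not go through as stated: you claim $d_\omega$ is bounded above near $S'$ by a smooth reference distance ``from $\mathrm{Ric}(\omega)\geq 0$ and the diameter bound, or directly from the K\"ahler potential being bounded,'' but neither implication is valid (a bounded potential bounds no distances, and nonnegative Ricci gives no local upper distance bound). Moreover $S'$ need not be a normal crossings divisor, so one must work on the modification $\pi:\ti N\to N$ and prove that the induced continuous surjection $p:\ti N\to X$ collapses each fiber of $\pi$ to a point (Proposition \ref{key}); this is the heart of (c) and requires strictly more than the logarithmic bound \eqref{to0}, namely the statement in Theorem \ref{prop1} that the horizontal coefficients $g_{i\bar j}$ with $i,j>k$ extend \emph{continuously} across $E$ after the branched cover, combined with a pluripotential argument (the local potential of $q^*\pi^*\omega$ is pulled back from $N$, hence constant on fibers of $\pi\circ q$, which forces the continuous limit of the metric restricted to those fiber directions to vanish, so translated paths have length tending to zero). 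Your sketch addresses neither the resolution step nor why distinct points of the same $\pi$-fiber get identified in $X$. In summary, your overall framing via special K\"ahler geometry, periods and monodromy points at the right tools, but the proposal is missing the two decisive inputs --- the Hodge-theoretic metric estimate near the discriminant and the Cheeger--Colding upgrade of the subsequential limits --- and replaces them with steps that fail or with the unproven fiber-diameter bound.
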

As proved in \cite{GTZ2}, in this case the limiting metric $\omega$ on $N_0$ is a {\em special K\"ahler metric} in the sense of \cite{Fr}. In this case, the base $N$ is always $\mathbb{CP}^n$ \cite{Hw} and the fibers $M_y,y\in N_0,$ are holomorphic Lagrangian $n$-tori \cite{Ma,Ma2}, so that $f$ is an {\em algebraic completely integrable system} over $N_0$. A classical result of Donagi-Witten \cite{DW} (see also \cite{Fr, Hi}) shows that the base of an algebraic completely integrable system admits a special K\"ahler metric, and our result shows that this metric arises as the collapsed limit of hyperk\"ahler metrics on the total space.

An application of our result is the revised Strominger-Yau-Zaslow (SYZ) conjecture due to  Gross-Wilson \cite{GW}, Kontsevich-Soibelman \cite{KS,KS2} and Todorov \cite{Man} (note that the statement of the conjecture in \cite{KS,KS2} also covers the hyperk\"ahler case).
 As explained in \cite{GTZ}, Theorem \ref{theorem1} implies a positive solution to such  conjecture for collapsed limits of hyperk\"ahler metrics near large complex structure limits which arise via hyperk\"ahler rotation from our setting above:

\begin{corollary}\label{ksthm}
The conjecture of Gross-Wilson \cite[Conjecture 6.2]{GW}, Kontsevich-Soibelman \cite[Conjectures 1 and 2]{KS} and Todorov \cite[p. 66]{Man} holds for those large complex structure families of hyperk\"ahler manifolds which arise from the setup of Theorem \ref{theorem1} via hyperk\"ahler rotation as in \cite[Theorem 1.3]{GTZ}.
\end{corollary}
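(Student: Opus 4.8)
The plan is to deduce the corollary from Theorem~\ref{theorem1} together with the hyperk\"ahler rotation mechanism of \cite[Theorem~1.3]{GTZ}: essentially all of the analytic content already resides in Theorem~\ref{theorem1}, and what remains is to translate its conclusion across the rotation. First I would recall precisely the assertion of the conjecture of Gross-Wilson \cite{GW}, Kontsevich-Soibelman \cite{KS,KS2} and Todorov \cite{Man} in the hyperk\"ahler case: for a maximally degenerate (large complex structure) one-parameter family of polarized hyperk\"ahler manifolds, the Ricci-flat K\"ahler metrics in the fixed polarization, suitably normalized, collapse in the Gromov-Hausdorff sense to a compact metric space $B$ of real dimension equal to half that of the total space, which carries an integral affine structure with singularities of real Hausdorff codimension at least $2$ and a Monge-Amp\`ere (indeed special K\"ahler) metric off the singular set, which is the base of a special Lagrangian torus fibration over its smooth part, with the convergence smooth there, and which is homeomorphic to $\PP^n$. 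Then I would recall from \cite{GTZ} how the families in Corollary~\ref{ksthm} are produced: starting from a projective hyperk\"ahler $(M,I)$ with holomorphic Lagrangian fibration $f:M\to\PP^n$ and the collapsing Ricci-flat K\"ahler metrics $\ti{\omega}_t$ of \eqref{eq1} in the classes $f^*[\omega_N]+t[\omega_M]$, one forms the hyperk\"ahler triple whose first form is $\ti{\omega}_t$ and whose holomorphic symplectic form is the (suitably normalized) holomorphic symplectic form $\Omega$ of $(M,I)$, and performs the hyperk\"ahler rotation sending $I$ to the complex structure $J_t$ for which $\Re\,\Omega$ is K\"ahler; this yields a family $(M,J_t)$ with a fixed polarization and degenerating complex structure which is exactly a large complex structure degeneration, and under the rotation the holomorphic Lagrangian torus fibers $M_y$ of $f$ become special Lagrangian tori. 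This is the content of \cite[Theorem~1.3]{GTZ}.

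The key point is then that the Gromov-Hausdorff limit is a purely metric notion and is therefore unchanged by hyperk\"ahler rotation: the underlying Riemannian manifolds on the two sides of the rotation dictionary are isometric, so, after matching the (constant in $t$) normalizations of \cite[Theorem~1.3]{GTZ} with the normalization $c_t\to 1$ used here, Theorem~\ref{theorem1} applied to $(M,I,\ti{\omega}_t)$ transports verbatim. Concretely, Theorem~\ref{theorem1} gives Gromov-Hausdorff convergence of $(M,\ti{\omega}_t)$ to a compact length space $(X,d_X)$ homeomorphic to $N=\PP^n$, with $S_X$ of real Hausdorff codimension at least $2$ and with $X\setminus S_X$ identified with the special K\"ahler manifold $(N_0,\omega)$; by the above, this $(X,d_X)$, up to the fixed normalization, is also the collapsed limit of the rotated large complex structure family.

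It remains to match $(X,d_X)$ with the structure the conjecture demands. The integral affine structure with singularities and the Monge-Amp\`ere property are built into the notion of special K\"ahler metric in the sense of \cite{Fr} (a compatible pair of flat torsion-free connections, a Hessian metric), hence come directly from the special K\"ahler statement recalled after Theorem~\ref{theorem1}; the special Lagrangian torus fibration over the smooth part is $f:M\setminus S\to N_0$ after rotation, the fibers being special Lagrangian by the linear-algebra computation underlying the previous paragraph; the smoothness of the convergence away from $S_X$, and hence the smooth convergence of the rescaled metrics to the semi-flat metric on the torus bundle over $N_0$ demanded by the metric SYZ part of the conjecture, follows from Theorem~\ref{theorem1} together with the smooth convergence results of \cite{GTZ,HT,TZ2} (available here since the fibers are tori); and the homeomorphism $X\cong\PP^n$ is Theorem~\ref{theorem1}(c), matching the form of the conjecture in the hyperk\"ahler case as in \cite{KS,KS2}.

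I expect the only real obstacle to be bookkeeping, not mathematics: one must keep careful track of the rescalings and normalizations across the hyperk\"ahler rotation --- which power of $t$ enters, the normalization of $\Omega$, how $\omega_{WP}$ in \eqref{wp} corresponds to the curvature of the special K\"ahler metric, and how the polarization behaves on the $(M,J_t)$ side --- but all of this is already carried out in \cite{GTZ}, so the proof of Corollary~\ref{ksthm} ultimately consists of combining Theorem~\ref{theorem1} with \cite[Theorem~1.3]{GTZ} and the surrounding discussion.
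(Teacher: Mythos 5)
Your proposal is correct and follows essentially the same route as the paper: the paper deduces the corollary by combining Theorem \ref{theorem1} with the hyperk\"ahler rotation argument of \cite[Theorem 1.3]{GTZ} and the special K\"ahler identification of the limit from \cite[Theorem 1.2]{GTZ2}, exactly as you describe, with the normalization bookkeeping and the semi-flat/special-Lagrangian picture spelled out in section \ref{sectsyz} (Theorem \ref{syzthm}).
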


Indeed, this follows exactly as in \cite[Theorem 1.3]{GTZ}, using Theorem \ref{theorem1} together with our earlier results in \cite[Theorem 1.2]{GTZ2}. The key new information provided by Theorem \ref{theorem1}, which was not available in \cite{GTZ,GTZ2}, is the uniqueness of the Gromov-Hausdorff limit, which is identified with the metric completion of the smooth part and is homeomorphic to the base, and the fact that it has singularities in real codimension at least $2$. This completes the program we started in \cite{GTZ} to extend Gross-Wilson's theorem on large complex structure limits of $K3$ surfaces \cite{GW} to higher-dimensional hyperk\"ahler manifolds.  Furthermore, by combining it with \cite{GTZ2},  Theorem \ref{theorem1} gives more  precise information about the limit space, which was predicted by \cite{GW,KS,KS2}.  This is explained in detail in section \ref{sectsyz} (see Theorem \ref{syzthm} there) in a slightly  more general  setup  than \cite{GTZ}.

We now give a brief outline of the paper. In section \ref{sectmain} we extend and sharpen a method introduced in our earlier work \cite{GTZ2} (when $\dim N=1$) and show that to prove parts (a) and (b) of Conjecture \ref{con1} in general it suffices to obtain an upper bound for the limiting metric $\omega$ near $S'$ (which may be assumed to be a simple normal crossings divisor after a modification) in terms of an {\em orbifold K\"ahler metric} up to a logarithmic factor. In section \ref{secthk} we give some improvements of earlier results of ours, and state the more precise estimate that we obtain in the hyperk\"ahler case, which implies the estimate needed in section \ref{sectmain}. We also show how the more precise estimate implies part (c) of Conjecture \ref{con1}. This estimate is then proved in section \ref{sectspec} by showing that the coefficients of the special K\"ahler metric $\omega$ are essentially given by periods of the Abelian varieties which are the fibers of $f$, and the blowup rate of these periods can be controlled using degenerations of Hodge structures. Lastly, in section \ref{sectsyz} we explain how Theorem \ref{theorem1} fits into the SYZ picture of mirror symmetry for hyperk\"ahler manifolds.
\\

{\bf Acknowledgments. }We thank H.-J. Hein, M. Popa, J. Song and G. Sz\'ekelyhidi for useful discussions, and Y.S. Zhang and the referees for comments. Part of this work was done during the first-named author's visits to the Yau Mathematical Sciences Center at Tsinghua University in Beijing and to the Institut Henri Poincar\'e in Paris (supported by a Chaire Poincar\'e at IHP funded by the Clay Mathematics Institute) and during the second-named author's visit to the Department of Mathematics at Northwestern University, which we   would like to thank for the hospitality. The first-named author was partially supported by NSF grant DMS-1610278.

\section{Gromov-Hausdorff Collapsing}\label{sectmain}
In this section we reduce parts (a) and (b) of Conjecture \ref{con1} in general to proving a suitable bound for the limiting metric near the discriminant locus of the map $f$, in terms of an orbifold K\"ahler metric on some log resolution of the discriminant locus of $f$. A stronger bound will then be proved in section \ref{sectspec} for hyperk\"ahler manifolds, which in section \ref{secthk} will be shown to be also sufficient to prove part (c) of the Conjecture. We decided to follow this presentation in order to highlight precisely what estimates are needed for each part of the Conjecture, and also because we envision that the estimate in Theorem \ref{main} may be more approachable in the general (not necessarily hyperk\"ahler) case than the stronger estimate in Theorem \ref{prop1}.

Let $f:M^m\to N^n$ be as in the Introduction, so $M$ is a compact Calabi-Yau manifold, $N$ is a compact K\"ahler manifold, $f$ is holomorphic surjective with connected fibers, and $0<n<m$. The discriminant locus of $f$ (i.e. the locus of critical values of $f$) is denoted by $S'\subset N$, and is a proper analytic subvariety of $N$. As an aside, we will see in Theorem \ref{triv} below (a small extension of our earlier result in \cite{TZ}) that $S'=\emptyset$ happens if and only if $f$ is a holomorphic fiber bundle, with base $N$ and fiber also Calabi-Yau manifolds. This is a very special situation, and in fact never happens if $M$ is hyperk\"ahler (since in this case $N\cong\mathbb{CP}^n$ \cite{Hw}). In any case, we may assume in the following that $S'\neq\emptyset$, since otherwise Conjecture \ref{con1} follows immediately from the results in \cite{TWY} which give uniform convergence of $\ti{\omega}_t$ to $f^*\omega$.

Let $\pi: \ti{N} \to N$ be a modification such that $E=\pi^{-1}(S')$ is a divisor with simple normal crossings and $\pi: \pi^{-1}(N_{0}) \to N_{0}$ is biholomorphic, where $N_0=N\backslash S'$. Write $E=\bigcup_{j=1}^\mu E_j$ for the decomposition of $E$ in irreducible components, so each $E_j$ is a smooth irreducible divisor and the $E_j$'s intersect in normal crossings. There is an integer $0\leq \ell\leq \mu$ so that the divisors $E_j$ with $1\leq j\leq \ell$ are $\pi$-exceptional, while $E_j$ with $\ell+1\leq j\leq \mu$ are proper transforms of divisors in $S'$. The limit cases $\ell=0,\mu$ are allowed, where $\ell=0$ means that $S'$ is already a simple normal crossings divisor and $\pi=\mathrm{Id}$, while $\ell=\mu$ means that $S'$ is of (complex) codimension at least $2$ in $N$.

Given natural numbers $m_i\in \mathbb{N}_{>0}$, $1\leq i\leq \mu$, there is a well-defined notion of {\em orbifold K\"ahler metric} $\omega_{\rm orb}$ on $\ti{N}$ with singularities along $E$ with orbifold order $m_i$ along each component $E_i$. Any such metric is a smooth K\"ahler metric on $\ti{N}\backslash E$ such that in any local chart $U$ (a unit polydisc with coordinates $(w_1,\dots, w_n)$) centered at a point of $E$ adapted to the normal crossings structure (so $E\cap U$ is given by $w_1\cdots w_k=0$ for some $1\leq k\leq n$, and say that $\{w_i=0\}=E_{j_i}\cap U$ for some $1\leq j_i\leq \mu$ and all $1\leq i\leq k$) we have that pulling back $\omega_{\rm orb}$ by the local uniformizing map $q:\ti{U}\to U$ ($\ti{U}$ is also the unit polydisc in $\mathbb{C}^n$) given by
$$q(w_1,\dots,w_n)=(w_1^{m_{j_1}},\dots,w_k^{m_{j_k}},w_{k+1},\dots,w_n),$$
the resulting metric on $\ti{U}\backslash\{w_1\cdots w_k=0\}$ extends smoothly to a K\"ahler metric on $\ti{U}$. This implies that on $U\backslash\{w_1\cdots w_k=0\}$ the metric $\omega_{\rm orb}$ is uniformly equivalent to the model
$$\sum_{i=1}^k \frac{\mn dw_i\wedge d\ov{w}_i}{|w_i|^{2(1-1/m_{j_i})}}+\sum_{i=k+1}^n \mn dw_i\wedge d\ov{w}_i.$$
We also fix a defining section $s_i$ of the divisor $E_i$ and a smooth Hermitian metric $h_i$ on $\mathcal{O}(E_i)$, for all $1\leq i\leq \mu$. Then a short calculation shows that given any K\"ahler metric $\omega_{\ti{N}}$ on $\ti{N}$ and any $\ve>0$ sufficiently small, the formula
$$\omega_{\rm orb}=\omega_{\ti{N}}+\ve\sum_{i=1}^\mu\ddbar|s_i|^{\frac{2}{m_i}}_{h_i},$$
defines an orbifold K\"ahler metric on $\ti{N}$ with orbifold order $m_i$ along each $E_i$. This shows that we can always find orbifold K\"ahler metrics adapted to any given orbifold structure.

Recall that on $N_0$ we have the limiting K\"ahler metric $\omega$, which is constructed in \cite{ST,To1} by solving a suitable Monge-Amp\`ere equation, and which satisfies \eqref{wp}.

The following result can be viewed as a generalization of \cite[Section 3]{GTZ2} to higher dimensions:
\begin{theorem}\label{main}
Suppose that there is a constant $C>0$ and natural numbers $d\in\mathbb{N}$ and $m_i\in \mathbb{N}_{>0}$, $1\leq i\leq \mu$, such that on $\pi^{-1}(N_0)$ we have
\begin{equation}\label{to0}
\pi^*\omega\leq C\left(1-\sum_{i=1}^\mu\log|s_i|_{h_i}\right)^d \omega_{\rm orb},
\end{equation}
where $\omega_{\rm orb}$ is an orbifold metric with orbifold order $m_i$ along each component $E_i$.
Then parts (a) and (b) of Conjecture \ref{con1} hold.
\end{theorem}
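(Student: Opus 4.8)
The plan is to treat parts (a) and (b) separately, reducing everything to the auxiliary metric appearing in \eqref{to0}. Set $\rho=1-\sum_{i=1}^\mu\log|s_i|_{h_i}$; rescaling the $h_i$ we may assume $\rho\geq 1$ on $\ti{N}$, with $\rho\to+\infty$ along $E$, so that \eqref{to0} reads $\pi^*\omega\leq C\rho^d\omega_{\rm orb}$ on $\ti{N}\setminus E=\pi^{-1}(N_0)$. \textbf{Part (a)} will follow from a local computation on $\ti N$. In a uniformizing chart the metric $\rho^d\omega_{\rm orb}$ is uniformly equivalent to $\rho^d$ times the model $\sum_{i=1}^k |w_i|^{-2(1-1/m_{j_i})}\mn\,dw_i\wedge d\ov{w}_i+\sum_{i=k+1}^n \mn\,dw_i\wedge d\ov{w}_i$, with $\rho\sim 1-\log|w_1\cdots w_k|$; since every exponent $1-1/m_{j_i}$ is $<1$, radial curves into $E$ have finite length and the $\rho^d\omega_{\rm orb}$-volume of $\ti N$ — and of each $E_j$ with its induced metric — is finite (the transverse weights $|w_i|^{-2(1-1/m)}$ become integrable against the area element). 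Hence $(\ti N\setminus E,\rho^d\omega_{\rm orb})$ is totally bounded, so its metric completion $Y$ is compact, and $E\subset Y$ has finite $(2n-2)$-dimensional Hausdorff measure (for the metric of $Y$). By \eqref{to0} the identity map is $\sqrt C$-Lipschitz from $(\ti N\setminus E,\rho^d\omega_{\rm orb})$ to $(N_0,\omega)$, so it extends to a $\sqrt C$-Lipschitz surjection $\Phi\colon Y\to X$ onto the metric completion $X$ of $(N_0,\omega)$. Thus $X$ is compact, and being the completion of a length space it is a compact length space; moreover $\Phi$ is the identity on the dense set $N_0$, so $S_X\subseteq\Phi(E)$, and since Lipschitz maps do not raise Hausdorff dimension, $\mathcal H^{2n-2}(S_X)\leq C^{n-1}\mathcal H^{2n-2}(E)<\infty$, i.e. $S_X$ has real Hausdorff codimension $\geq 2$. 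This is part (a).

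\textbf{Part (b).} Recall that $\mathrm{Ric}(\ti\omega_t)=0$, that $(M,\ti\omega_t)$ is precompact in the Gromov--Hausdorff topology, and that every subsequential limit $(X',d_{X'})$ of $(M,\ti\omega_{t_i})$ comes with a dense open subset $X'_0$ and a homeomorphism $\psi\colon N_0\to X'_0$ which is a local isometry for $\omega$ and $d_{X'}$ (\cite{TWY,GTZ}); it therefore suffices to prove every such limit is isometric to $(X,d_X)$, for then the whole family converges to $X$. Since $\psi$ is a local isometry it is $1$-Lipschitz from the length space $(N_0,d_\omega)$ to $(X',d_{X'})$, hence extends to a $1$-Lipschitz surjection $\bar\psi\colon X\to X'$. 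The crux of the proof — extending and sharpening the method of \cite[Section~3]{GTZ2} to $n>1$, and the only place \eqref{to0} is used — is the uniform metric negligibility of neighborhoods of $S$: with $K_\delta:=N\setminus B_\delta(S')$,
$$\lim_{\delta\to 0}\ \limsup_{t\to 0}\ \sup_{x\in M}\ d_{\ti\omega_t}\bigl(x,\,f^{-1}(K_\delta)\bigr)=0.$$
I would prove this by joining a point $x$ with $f(x)$ near $S'$ to $f^{-1}(K_\delta)$ by a path whose length, in the base directions, is bounded by the $\rho^d\omega_{\rm orb}$-distance on $\ti N$ from a neighborhood of $E$ to $\pi^{-1}(K_\delta)$ — which $\to 0$ as $\delta\to 0$ by the finite-distance computation of part (a) together with \eqref{to0} — and, in the fiber directions, is controlled using the known (rapidly blowing up, but transversally integrable) estimates for $\ti\omega_t$ near $S$, contributing terms that vanish as $t\to 0$. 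I expect this to be the main obstacle.

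\textbf{Conclusion of (b).} Granting the negligibility estimate, the rest is soft. On each fixed good region $f^{-1}(K_\delta)$ one has $(f^{-1}(K_\delta),\ti\omega_t)\to(K_\delta,d_\omega)$ (fibers collapsing) by \cite{TWY,GTZ}, and letting $\delta\to 0$ and using the negligibility estimate to absorb the part over $B_\delta(S')$ gives $(M,\ti\omega_t)\to(X,d_X)$ as soon as we identify the limit. For the identification, the same negligibility estimate shows that $S_{X'}=X'\setminus X'_0$ has finite $(2n-2)$-dimensional Hausdorff measure (it is a Lipschitz image of a subset of $(E,\rho^d\omega_{\rm orb}|_E)$, exactly as in part (a)); and a closed set of real Hausdorff codimension $\geq 2$ cannot affect the length structure of $X'$ between points of $X'_0$, since rectifiable paths can be perturbed off it with arbitrarily small increase in length. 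Hence $\psi\colon(N_0,\omega)\to(X'_0,d_{X'}|_{X'_0})$ is a \emph{global} isometry onto its image, and passing to metric completions — which is functorial for surjective isometries and sends $(N_0,d_\omega)$ to $X$ and $(X'_0,d_{X'}|_{X'_0})$ to $X'$ — shows $\bar\psi\colon X\to X'$ is an isometry. This proves part (b), up to the negligibility estimate, which is the heart of the argument.
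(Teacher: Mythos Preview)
Your Part (a) via the auxiliary metric $\rho^d\omega_{\rm orb}$ and a Lipschitz comparison is a reasonable alternative to the paper's explicit covering argument, and works once you justify that $E$ has finite $(2n-2)$-Hausdorff measure in the completion $Y$ (this is essentially the same covering computation the paper carries out). However, your Part (b) has two genuine gaps, and you have misidentified the crux.

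First, the ``negligibility estimate'' $\lim_{\delta\to 0}\limsup_{t\to 0}\sup_{x\in M} d_{\ti\omega_t}(x,f^{-1}(K_\delta))=0$ is not something you can extract from \eqref{to0}: that inequality bounds the \emph{limit} metric $\omega$ on $N_0$, not $\ti\omega_t$ on $M$, and the known pointwise estimates for $\ti\omega_t$ near $S$ blow up like a negative power of the distance to $S$, which is not ``transversally integrable'' in any useful sense. The paper never proves such an estimate. Fortunately it is also unnecessary: once every sequential Gromov--Hausdorff limit $X'$ is shown to be isometric to the completion $X$, convergence of the whole family follows, and the codimension-$2$ bound for $S_{X'}$ already follows from your Part (a) composed with the $1$-Lipschitz extension $\bar\psi\colon X\to X'$.

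Second, and this is the real issue, the assertion ``a closed set of real Hausdorff codimension $\geq 2$ cannot affect the length structure of $X'$, since rectifiable paths can be perturbed off it'' is \emph{false} in general length spaces; there is no transversality argument available. This step is exactly where the Ricci-flatness of $\ti\omega_t$ enters: $X'$ is a noncollapsed Ricci limit space (after rescaling), and the paper invokes the Cheeger--Colding structure theory \cite[Theorem 3.7]{CC2}, which says that for the renormalized limit measure $\nu$, if $\nu_{-1}(S_{X'})=0$ then $\nu$-a.e.\ pair of points in $X'_0$ is joined by a minimal geodesic lying entirely in $X'_0$. The paper checks $\nu_{-1}(S_{X'})=0$ by identifying $\nu$ with a multiple of $\mathcal H^{2n}_{d_{X'}}$ (using \cite{CC1} and the Monge--Amp\`ere equation for $\omega$) and then using the codimension-$2$ bound. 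This is what upgrades the local isometry $\psi$ to a global one. Your argument would go through if you replaced the unsupported perturbation claim by this appeal to \cite{CC1,CC2}; without it, the proof is incomplete.
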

\begin{proof}
Let us define $E'_{n+1}=\emptyset$ and for $1\leq p\leq n$ define recursively
\begin{equation}\label{strata}
E'_p=\bigcup_{|J|=p} (E_{j_1}\cap\cdots\cap E_{j_p})\backslash E'_{p+1},
\end{equation}
where the union is over all multiindices $J=(j_1,\dots,j_p)$ with $1\leq j_1,\dots,j_p\leq \mu$. Therefore each $E'_p$ is a (possibly empty) disjoint union of smooth connected $(n-p)$-dimensional relatively compact complex submanifolds of $\ti{N}$ (the real $(2n-2p)$-Hausdorff measure of $E'_p$ is therefore finite), and we can write $E=\bigcup_{p=1}^n E'_p.$ Note also that if $U$ is any small open neighborhood of $E'_{p+1}$, then $E'_{p}\backslash U$ is compact. Using this we see that for every small $\rho,\beta>0$ we can find a covering of $E$ by $N(\rho)$ open sets $\{V_i(\rho)\}\subset\ti{N}$ such that we have
\begin{equation}\label{limit}
\rho^{2n-2+\beta}N(\rho)\to 0,
\end{equation}
as $\rho\to 0$, and each $V_i(\rho)$ is contained in a chart with coordinates $(w_1,\dots,w_n)$ defined in the unit polydisc $\Delta^n$ where $E$ is given locally by $w_1\cdots w_k=0$ for some $1\leq k\leq n$, and in this chart we have
$$V_i(\rho)=\{w\in \Delta^n\ |\ |w_j|< \rho^{m_j}, \textrm{ for }1\leq j\leq k, \textrm{ and }|w_j|<\rho, \textrm{ for }k+1\leq j\leq n\},$$
where for simplicity of notation we will write $m_j$ for the orbifold order along $\{w_j=0\}$.
Define a map $q:\Delta^n\to\Delta^n$ by
$$q(w_1,\dots,w_n)=(w_1^{m_1},\dots,w_k^{m_k},w_{k+1},\dots,w_n),$$
so that
$q^{-1}(V_i(\rho))$ equals the polydisc of radius $\rho$, which we will denote by $\Delta^n(\rho)$. Then our assumption \eqref{to0} implies that on $\Delta^n\backslash E$ we have that
\begin{equation}\label{estimatio}
q^*\pi^*\omega\leq C \left(1-\sum_{i=1}^k\log |w_i|\right)^d \omega_E,
\end{equation}
for some $C>0, d\in\mathbb{N}$, where $\omega_E$ is the Euclidean metric on $\Delta^n$.
Using this, we claim that for any two points $q_1,q_2\in \Delta^{*,k}(\rho)\times\Delta^{n-k}(\rho)\subset\Delta^n(\rho)$ there is a path $\ti{\gamma}$ connecting them such that $\ti{\gamma}\subset \Delta^{*,k}(\rho)\times\Delta^{n-k}(\rho)$ and
\begin{equation}\label{too0}
{\rm length}_{q^*\pi^*\omega} (\ti{\gamma}) \leqslant C \rho (-\log \rho)^{d}.
\end{equation}
Indeed, if we denote $w_{i}=r_{i}e^{\sqrt{-1} \theta_{i}}$, then on $\Delta^n\backslash E$ the estimate \eqref{estimatio} translates to
\begin{equation}\label{estima}
q^*\pi^*\omega\leqslant C \left(1-\sum_{i=1}^k\log r_{i}\right)^d \sum_{j=1}^n (dr_{j}^{2}+r_{j}^{2}d\theta_{j}^{2}).
\end{equation}
Write
$$q_1=(r_1e^{\mn\theta_1},\dots,r_ne^{\mn\theta_n}),$$
where $0<r_j<\rho$ for $1\leq j\leq k,$ while $0\leq r_j<\rho$ for $k+1\leq j\leq n$, and if $r_j=0$ for any such $j$ then we set $\theta_j=0$.
We then define a path
$$\gamma_1(s)= ((s\rho/2+(1-s)r_1) e^{\sqrt{-1}\theta_1},\dots,(s\rho/2+(1-s)r_n) e^{\sqrt{-1}\theta_n}),$$ with $0\leq s\leq 1$ gives a path in $\Delta^{*,k}(\rho)\times\Delta^{n-k}(\rho)$ whose initial point is $q_1$ and whose endpoint lies on the distinguished boundary of $\Delta^n(\rho/2)$, given by
$$S(\rho/2)=\{w \in \Delta^{n}(\rho)\ |\quad |w_j|=\rho/2, 1\leq j\leq n\}.$$
The Euclidean norm of $\gamma_1'$ is at most $\rho$, and so using \eqref{estima} we obtain
\[\begin{split}
{\rm length}_{q^*\pi^*\omega}  (\gamma_{1})  &\leqslant  C \rho\int_{0}^{1} \left(1-\sum_{i=1}^k\log (s\rho/2+(1-s)r_i)\right)^{\frac{d}{2}} ds\\
 &\leq C\rho\int_0^1  \left(1-k\log (s\rho/2)\right)^{\frac{d}{2}} ds\\
 &\leq C\rho(-\log \rho)^{d},
\end{split}\]
where we used the fact that $\rho$ is small to increase the power of $-\log$.
On the other hand the distinguished boundary $S(\rho/2)$ is diffeomorphic to the real torus $T^n$ (in particular it is connected), and using again \eqref{estima}
we see that
$$ {\rm diam}_{q^*\pi^*\omega}  (S(\rho/2))  \leqslant  C \rho(-\log \rho)^{\frac{d}{2}}\leqslant   C \rho(-\log \rho)^{d}. $$
Therefore we conclude that $q_1$ and $q_2$ can indeed by joined by a curve $\ti{\gamma}\subset \Delta^{*,k}(\rho)\times\Delta^{n-k}(\rho)$ satisfying \eqref{too0}.
Considering the image $\gamma=q(\ti{\gamma})$, whose $\pi^*\omega$-length is equal to the $q^*\pi^*\omega$-length of $\ti{\gamma}$, we conclude that every two points in $V_i(\rho)\backslash E$ can be joined by a path $\gamma$ contained in $V_i(\rho)\backslash E$ with
\begin{equation}\label{too1}
{\rm length}_{\pi^*\omega} (\gamma) \leqslant C \rho (-\log \rho)^{d}.
\end{equation}
Since the open sets $\{V_i(\rho)\}$ cover $E$, and since $\rho(-\log \rho)^{d}\to 0$ as $\rho\to 0$, it follows in particular that $$\sup_{y_{1},y_{2}\in N_{0}} d_{\omega}(y_{1},y_{2}) \leqslant C,$$ where
$d_{\omega}$ is the metric space structure   on $N_{0}$ induced by $\omega$. This implies that the metric completion of $(N_0,d_\omega)$ is compact.

Now pick any sequence $t_k\to 0$ such that $(M,\ti{\omega}_{t_k})$ converges in the Gromov-Hausdorff topology to a compact length metric space $(X,d_X)$. As we recalled in the Introduction, in \cite[Corollary 1.4]{TWY} (see also \cite[Theorem 1.2]{GTZ}) we constructed a local isometric embedding
of $(N_0,\omega)$ into $(X,d_X)$ with open dense image $X_0\subset X$ via a homeomorphism $\psi:N_0\to X_0$. Call $S_X=X\backslash X_0$.
The density of $X_0$ implies that for every fixed $\rho>0$ the set
$$\bigcup_{i=1}^{N(\rho)}\ov{\psi(\pi(V_i(\rho))\cap N_0)},$$
covers $S_X$.
 Then the fact that $(X,d_X)$ is a length space implies that for every $i$ we have
\[\begin{split}{\rm diam}_{d_{X}} (\overline{\psi(\pi(V_i(\rho))\cap N_0)}) &={\rm diam}_{d_{X}} (\psi(\pi(V_i(\rho))\cap N_0))\\
&=\sup_{p,q\in \psi(\pi(V_i(\rho))\cap N_0)}\inf_{\eta}{\rm length}_{d_X}(\eta),
\end{split}\]
where the infimum is over all curves $\eta$ in $X$ joining $p$ and $q$. But we have just shown that $p$ and $q$ can be joined by curves of the form $\psi(\pi(\gamma))$, with $\gamma\subset V_i(\rho)\backslash E$ satisfying \eqref{too1},
and since $\psi$ is a local isometry we have that
\begin{equation}\label{lengthp}
{\rm length}_{d_X}(\psi(\pi(\gamma)))={\rm length}_{\omega}(\pi(\gamma))={\rm length}_{\pi^*\omega}(\gamma),
\end{equation}
for any such curve $\gamma$.
We then conclude that
$${\rm diam}_{d_{X}} (\overline{\psi(\pi(V_i(\rho))\cap N_0)}) \leqslant C  \rho(-\log \rho)^{d},$$
for all $\rho>0$ small and for $C>0$ independent of $\rho$.

Fix now small $\beta,\ve>0$, and given any $\eta>0$, choose $\rho>0$ small so that $ C  \rho(-\log \rho)^{d}<\eta.$ We estimate
\begin{eqnarray*}
\mathcal{H}_{d_{X}, \eta}^{2n-2+\beta+\ve}(S_{X}) & \leqslant   & \sum_{i=1}^{N(\rho)}\varpi_{2n} {\rm diam}_{d_{X}}^{2n-2+\beta+\ve}  (\overline{\psi(\pi(V_{i}(\rho))\cap N_{0})})\\  & \leqslant &  C N(\rho)\rho^{2n-2+\beta+\ve}(-\log \rho)^{d(2n-2+\beta+\ve)}\\ & =& C \rho^{\ve}(-\log \rho)^{d(2n-2+\beta+\ve)}  \left(N(\rho)\rho^{2n-2+\beta}\right)\\ & \to & 0,
\end{eqnarray*}
as $\rho\to 0$ thanks to \eqref{limit},
where $\varpi_{2n}$ denotes the volume of unit ball in $\mathbb{R}^{2n}$. Note that as $\eta\to 0$ then $\rho\to 0$ as well.
Thus
\begin{equation}\label{measure}
\mathcal{H}_{d_{X}}^{2n-2+\beta+\ve}(S_{X})=\lim_{\eta \to 0} \mathcal{H}_{d_{X}, \eta}^{2n-2+\beta+\ve}(S_{X})=0,
\end{equation}
for any small $\beta,\ve>0$, and so we conclude that $\dim_{\mathcal{H} }S_{X}\leqslant 2n-2$. This proves part (a) of Conjecture \ref{con1}.

Note also that for any two points $x,y\in N_0$ we have
\begin{equation}\label{length}
d_X(\psi(x),\psi(y))\leq d_\omega(x,y).
\end{equation}
Indeed, for any $\ve>0$ there is a path $\gamma_\ve$ in $N_0$ joining $x$ and $y$ with
$\mathrm{length}_{\omega}(\gamma_\ve)\leq d_\omega(x,y)+\ve$. From \eqref{lengthp} we see that
$$\mathrm{length}_{\omega}(\gamma_\ve)=\mathrm{length}_{d_X}(\psi(\gamma_\ve))\geq d_X(\psi(x),\psi(y)),$$ and letting $\ve\to 0$ proves \eqref{length}.

If we let $\omega_M$ be any Ricci-flat K\"ahler metric on $M$, and let $\nu$ be the reduced measure constructed in  \cite[Section 5]{GTZ}, then $\nu(S_{X})=0$ \cite[Remark 5.3]{GTZ} and \cite[Section 5]{GTZ} shows that there exist constants $\upsilon,c>0$ such that
for any $K\subset N_{0}$,
$$\nu(K)=\upsilon \int_{f^{-1}(K)} \omega_M^m=\upsilon \int_{K} f_{*}(\omega_M^m)=c\upsilon\int_{K}\omega^{n}$$ because, as explained for example in Section 4 in \cite{To1}, on $N_0$ the metric $\omega$ satisfies
\begin{equation}\label{eq3} \omega^{n}=cf_{*}(\omega_M^m), \end{equation}
for some explicit constant $c>0$.  Therefore
$$\nu(K)=\lambda {\rm Vol}_{\omega}(K)=\lambda\mathcal{H}^{2n}_{d_{X}}(K),$$ for some constant $\lambda>0$.
Thanks to  \cite[Theorem 1.10]{CC1} we have that $\nu$ is a Radon measure, and then the same argument as in \cite[p.105]{GTZ2} shows that $\nu= \lambda \mathcal{H}^{2n}_{d_{X}}$ as measures on $X$. If we let $\nu_{-1}$ be the measure induced by $\nu$ ``in codimension $1$'', as defined in  \cite[Section 2]{CC2} (see also the discussion in \cite[p.106]{GTZ2}), then we deduce that $$\nu_{-1}(S_{X})=\lambda'  \mathcal{H}^{2n-1}_{d_{X}}(S_{X})=0,$$
using \eqref{measure}, for some positive constant $\lambda'$.
We then apply \cite[Theorem 3.7]{CC2} which shows that given any $x_1\in X_0$ for $\mathcal{H}^{2n}_{d_X}$-almost all $y\in X_0$ there exists a minimal geodesic from $x_1$ to $y$ which lies entirely in $X_0$. In particular, given any two points $x_1,x_2\in X_0$ and $\delta>0$, there is a point $y\in X_0$ with $d_X(x_2,y)<\delta$ which can be joined to $x_1$ by a minimal geodesic $\eta_1$ contained in $X_0$. Furthermore we can take $y$ close enough to $x_2$ so that it can also be joined to $x_2$ by a curve $\eta_2$ contained in $X_0$ with $d_X$-length at most $\delta$. Concatenating $\eta_1$ and $\eta_2$ we obtain a curve $\eta$ in $X_0$ joining $x_1$ to $x_2$ with
$${\rm length}_{d_X}(\eta)\leqslant d_{X} (x_{1}, y)+ \delta\leq d_X(x_1,x_2)+2\delta.$$
Since $\psi:N_0\to X_0$ is a homeomorphism, we conclude that given any two points $q_1,q_2\in N_0$ and $\delta>0$, there is a curve $\gamma$ in $X_0$ joining $q_1$ and $q_2$ with
$${\rm length}_\omega(\gamma)\leq d_{X} (\psi(q_{1}), \psi(q_{2}))+ 2\delta.$$
Therefore, thanks to \eqref{length}, we conclude that
$$d_{X} (\psi(q_{1}), \psi(q_{2})) \leqslant d_{\omega} (q_{1}, q_{2})\leqslant{\rm length}_{\omega} (\gamma) \leqslant d_{X} (\psi(q_{1}), \psi(q_{2}))+ 2\delta.$$
Letting $\delta \to 0$, we conclude that
$$d_{\omega} (q_{1}, q_{2}) =  d_{X} (\psi(q_{1}), \psi(q_{2})).$$
Hence $\psi:(N_0,\omega)\to (X_0,d_X)$ is a global isometry, and since $X_0$ is dense in $X$ this implies that $(X, d_{X})$ is isometric to the metric completion of $(N_{0}, \omega)$.
This proves part (b) of Conjecture \ref{con1}.
\end{proof}

The method that we developed to prove Theorem \ref{main} is quite robust, and it applies to other setups as well, see e.g. \cite{ZS} for a very recent work that uses our result in different settings.

\section{Metrics on torus fibrations}\label{secthk}
In this section we prove some general results about metrics on torus fibrations, extending our earlier work in \cite{GTZ,TZ}, and in Theorem \ref{prop1} we state the main estimate which holds in the hyperk\"ahler case, which implies estimate \eqref{to0}, and which will be proved in section \ref{sectspec}. We also show that the main estimate implies part (c) of Conjecture \ref{con1}.

\subsection{Semi-flat forms on torus fibrations}
We start with a general discussion. Let $(M_0^m,\omega_M)$ be a possibly noncompact K\"ahler manifold with a proper holomorphic submersion $f:M_0^m\to N_0^n$ with connected fibers onto a complex manifold $N_0$, $0<n<m$. Assume that all the fibers $M_y=f^{-1}(y), y\in N_0$ are complex tori, so that $M_y\cong\mathbb{C}^{m-n}/\Lambda_y$, for some lattice $\Lambda_y\subset \mathbb{C}^{m-n}$, and that $f$ admits a holomorphic section $\sigma:N_0\to M_0$.

\begin{theorem}\label{semiflat}
Under these assumptions, there is a unique closed semipositive $(1,1)$ form $\omega_{\rm SF}$ on $M_0$, such that $\omega_{\rm SF}|_{M_y}$ is the unique flat K\"ahler metric cohomologous to $\omega_M|_{M_y}$, and such that given any coordinate ball $B\subset N_0$, and any trivialization $f^{-1}(B)\cong  (B\times\mathbb{C}^{m-n})/\Lambda$ which maps $\sigma$ to the zero section, the form $\omega_{\rm SF}$ is given by the explicit formula of \cite{GTZ,He,HT}.
\end{theorem}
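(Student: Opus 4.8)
The plan is to construct $\omega_{\rm SF}$ locally via the explicit formula and then check that the local pieces glue. First I would fix a coordinate ball $B \subset N_0$ and a trivialization $f^{-1}(B) \cong (B \times \mathbb{C}^{m-n})/\Lambda$ sending $\sigma$ to the zero section, where $\Lambda$ is generated by $2(m-n)$ holomorphic sections $\lambda_1, \dots, \lambda_{2(m-n)} : B \to \mathbb{C}^{m-n}$ (the period maps); these exist after shrinking $B$ because the local system of lattices is trivializable over a ball. The flat metric on each fiber $M_y$ cohomologous to $\omega_M|_{M_y}$ is unique (two cohomologous flat metrics on a torus differ by an exact form, but a translation-invariant exact form vanishes), so $\omega_{\rm SF}|_{M_y}$ is forced; this already gives uniqueness of $\omega_{\rm SF}$ on each fiber. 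The explicit semi-flat formula of \cite{GTZ,He,HT} — built from the fiberwise flat metric, the period matrix, and its derivatives in the base directions, with a correction term making the form closed — then defines a closed $(1,1)$ form on $f^{-1}(B)$ restricting correctly on fibers. I would record that this formula is semipositive: it is a sum of a fiberwise-positive piece and a pullback-type piece, and closedness plus the explicit shape force semipositivity (this is the standard "semi-flat" computation; I would cite it rather than redo it).

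Next I would address \emph{uniqueness} globally. Suppose $\omega$ and $\omega'$ are two closed semipositive $(1,1)$ forms on $M_0$ both restricting to the fiberwise flat metric cohomologous to $\omega_M|_{M_y}$. Their difference $\eta = \omega - \omega'$ is a closed $(1,1)$ form vanishing on every fiber. The key point is that a closed form vanishing on all fibers of a submersion with a section is a pullback: more precisely, $\eta = f^*\beta$ for a closed $(1,1)$ form $\beta$ on $N_0$, where $\beta = \sigma^*\eta$. To see this, note $\eta - f^*\sigma^*\eta$ is closed, vanishes on fibers, and pulls back to zero under $\sigma$; one checks in the local trivialization that a closed $(1,1)$ form vanishing on fibers and on the zero section must vanish identically (it has no "$dz \wedge d\bar{w}$" or "$dw \wedge d\bar{w}$" components once restricted, and closedness propagates this). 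But $\omega$ and $\omega'$ are determined fiberwise, so the pullback ambiguity $\beta$ is exactly the freedom in the construction — and since the formula in \cite{GTZ,He,HT} is the canonical one (pinned down, say, by requiring $\sigma^*\omega_{\rm SF}$ to match a prescribed form, or simply by the explicit coordinate expression), the forms agree. This shows the form produced by the local formula is independent of the chosen trivialization on overlaps: two trivializations differ by an element of $\mathrm{GL}(m-n,\mathbb{Z})$ acting on $\mathbb{C}^{m-n}$ together with a translation by a section of $\Lambda$, and the semi-flat formula is constructed precisely to be invariant under these changes (this invariance is, again, part of the cited computation). Hence the local forms patch to a global closed semipositive $\omega_{\rm SF}$ on $M_0$.

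The main obstacle is the gluing/invariance step: verifying that the explicit formula is genuinely independent of the trivialization requires tracking how the period matrix $\Lambda(y)$, its derivatives, and the fiber coordinates transform under a change of the $\mathbb{Z}$-basis of $\Lambda$ and under translation by a holomorphic section, and confirming the correction term transforms compatibly so that the total expression is unchanged. This is the computational heart of \cite{GTZ,He,HT}, and I would lean on those references for the detailed identity rather than reproduce it; the conceptual content I would supply here is the uniqueness argument above, which reduces global well-definedness to the fiberwise uniqueness of the flat metric plus the "closed form vanishing on fibers is a base pullback" lemma. A secondary, minor point is checking semipositivity is preserved under the gluing, which is automatic once we know the glued object equals the local semi-flat form everywhere.
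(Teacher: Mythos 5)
There is a genuine gap, and it sits exactly at the step you defer. Your plan is local-to-global: define $\omega_{\rm SF}$ on each $f^{-1}(B)$ by the explicit formula and glue, leaning on \cite{GTZ,He,HT} for the invariance of the formula under changes of trivialization. But those references do not contain that statement in the generality you need: as the paper points out, \cite{GTZ,HT} only work over a single small ball (so the gluing question never arises there), and \cite{He} constructs the form globally but without the explicit formula over balls. So the well-definedness under change of marking of $\Lambda$ and change of section-preserving trivialization is precisely the new content of the theorem, and it cannot be outsourced. The paper avoids this computation altogether by running the argument in the opposite direction: it first builds $\omega_{\rm SF}$ globally and intrinsically following \cite{He} (the fiberwise flat metric applied to the projection $P:T^{(1,0)}M_0\to\mathcal{E}$ along the horizontal distribution of the Gauss--Manin connection), and then identifies it over each ball with the explicit formula by observing that both forms are invariant under translation by flat sections and agree at every point of the zero section (both vanish in horizontal directions there and restrict to the same flat metric on fibers); uniqueness then follows because any two trivializations sending $\sigma$ to the zero section differ by fiberwise translation by a flat section, which leaves the formula unchanged.

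The conceptual replacement you offer for the invariance computation is also flawed. The lemma you invoke --- a closed $(1,1)$-form that vanishes on every fiber and on the zero section must vanish identically, hence a closed form vanishing on fibers equals $f^*\sigma^*\eta$ --- is false: on the local model $B\times\mathbb{C}$ with base coordinate $w$ and fiber coordinate $z$, the form $\sqrt{-1}\,(dw\wedge d\bar z+dz\wedge d\bar w)$ is real, closed, of type $(1,1)$, vanishes on every fiber $\{w=\mathrm{const}\}$ and on the zero section $\{z=0\}$, yet is nonzero; closedness does not ``propagate'' the vanishing of the mixed components. (One can rule out such terms only by using extra structure, e.g.\ invariance under fiberwise translations, which is exactly what the paper exploits.) Moreover, the uniqueness you try to prove at that stage --- uniqueness among all closed semipositive forms with the prescribed fiber restrictions --- is stronger than what the theorem asserts and is in fact false, since $\omega_{\rm SF}+f^*\beta$ for any closed semipositive $\beta$ on $N_0$ has the same fiber restrictions; you do note the $f^*\beta$ ambiguity afterwards, but the argument leading to it does not stand. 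A further small inaccuracy: two trivializations sending $\sigma$ to the zero section do not merely differ by an element of $\mathrm{GL}(m-n,\mathbb{Z})$ acting on $\mathbb{C}^{m-n}$; the relevant ambiguities are changes of the integral symplectic marking of the rank-$2(m-n)$ lattice (which change the period map) and fiberwise translations by flat sections, and it is the latter reduction that the paper uses. To repair your route you would have to carry out the transformation computation for the formula \eqref{eta} under these changes yourself, or else switch to the paper's intrinsic construction.
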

The last point means the following: the universal cover of $f^{-1}(B)$ is in fact biholomorphic to $p:B\times\mathbb{C}^{m-n}\to (B\times\mathbb{C}^{m-n})/\Lambda\cong f^{-1}(B)$ (see \cite{GTZ}), we may assume that $p$ pulls back $\sigma$ to the zero section,
and we can then write $p^*\omega_{\rm SF}=\ddbar \eta$ where
\begin{equation}\label{eta}
\eta(y,z) = -\frac{1}{4}\sum_{i,j=1}^{m-n} ({\rm Im}\,Z(y))^{-1}_{ij}(z_i - \bar{z}_i)(z_j - \bar{z}_j),
\end{equation}
and $Z:B\to\mathfrak{H}_{m-n}$ is a holomorphic period map from $B$ to the Siegel upper half space which was constructed in \cite{GTZ,He,HT}.

The form $\omega_{\rm SF}$ is called {\em semi-flat}. It was first introduced in \cite{GSVY} in the context of elliptically fibered $K3$ surfaces.

This result follows easily from the arguments of \cite{GTZ,He,HT}; in particular, $\omega_{\rm SF}$ is implicitly constructed in \cite{He} but without the explicit formula over small balls, while in \cite{GTZ,HT} we only considered the case when $N_0$ is a small ball. For the reader's convenience we give the proof.
\begin{proof}
We initially follow the construction in \cite[Section 3.2]{He}. For that construction to apply, we need the existence of a section (which we assume), and of a ``constant polarization'' (which exists because $\omega_M$ is K\"ahler, as in \cite[Proposition 2.1]{HT}). If $g_{y}$ denotes the unique flat K\"ahler metric on $M_y$ in the class $[\omega_M|_{M_y}]$, for any $y\in N_0$, then the restriction of $g_y$ to $T_{\sigma(y)}^{(1,0)}M_y$ defines a Hermitian metric on the holomorphic vector bundle $\mathcal{E}=\sigma^* T_{M_0/N_0}^{(1,0)}$ over $N_0$. As indicated above, we have a biholomorphism $M_0\cong \mathcal{E}/\Lambda$ for a holomorphic lattice bundle $\Lambda\subset \mathcal{E}$. Then $\Lambda$ induces a flat Gauss-Manin connection on $\mathcal{E}$, with horizontal space $\mathcal{H}$. Let $P:T^{(1,0)}M_0\to \mathcal{E}$ be the projection along $\mathcal{H}$, and for any $x\in M_0$, $u,v\in T_x^{(1,0)}M_0$ define
$g_{\rm SF}(u,v)=g_{f(x)}(Pu,Pv)$. Then $g_{\rm SF}$ defines a semipositive closed real $(1,1)$ form $\omega_{\rm SF}$ on $M_0$, as verified in \cite{He}, which restricts to
the correct flat K\"ahler metric on each fiber $M_y$.

If now $B\subset N_0$ is a coordinate ball, and $p:B\times\mathbb{C}^{m-n}\to f^{-1}(B)$ is the universal covering map, and assume that $p$ pulls back $\sigma$ to the zero section, then the construction in \cite{GTZ,HT} gives us
a function $\eta$ on $B\times\mathbb{C}^{m-n}$ defined by \eqref{eta}, such that $\ddbar\eta$ descends to a closed semipositive $(1,1)$ form $\omega'_{\rm SF}$ on $f^{-1}(B)$, with $\omega'_{\rm SF}|_{M_y}=\omega_{\rm SF}|_{M_y}$ for all $y\in B$. Both $\omega_{\rm SF}$ and $\omega'_{\rm SF}$ are invariant under translation by flat sections of the Gauss-Manin connection, and at every point on the zero section they are equal because they both vanish in the horizontal directions and are equal to the same flat K\"ahler metric on each fiber.
Therefore we conclude that $\omega_{\rm SF}=\omega'_{\rm SF}$ on all of $f^{-1}(B)$, as required. Uniqueness of $\omega_{\rm SF}$ follows from the fact that, locally on the base, it is given by this explicit formula, and that two different trivializations of $f^{-1}(B)$ which both map $\sigma$ to the zero section, must differ by fiberwise translation by a flat section, which leaves $\omega_{\rm SF}$ unchanged.
\end{proof}

As a consequence of the explicit formula \eqref{eta}, we see that if over a coordinate ball $B\subset N_0$ we define $\lambda_t:B\times\mathbb{C}^{m-n}\to B\times\mathbb{C}^{m-n}$ by $\lambda_t(y,z)=(y,t^{-\frac{1}{2}}z)$, then we have that
\begin{equation}\label{gnam}
t\lambda_t^*p^*\omega_{\rm SF}=p^*\omega_{\rm SF}.
\end{equation}

From now on we specialize to the setting of Theorem \ref{theorem1}, so that $M$ is projective hyperk\"ahler, and $f:M\to N$ is a holomorphic fiber space (and here we also assume the existence of a holomorphic section $\sigma:N_0\to M_0$, so that Theorem \ref{semiflat} applies), and $\ti{\omega}_t$ is the hyperk\"ahler metric on $M$ in the class $f^*[\omega_N]+t[\omega_N],$ $0<t\leq 1$. In this case it follows from \cite{Hw} that in fact $N\cong\mathbb{CP}^n$, but we will not need this fact. As before, we let $S'\subset N$ be the discriminant locus of $f$ and we set $N_0=N\backslash S, M_0=f^{-1}(N_0)$.

Using \eqref{gnam}, in \cite{HT} it is proven that given any compact $K\subset B\times\mathbb{C}^{m-n}$ and any $k\geq 0$, there is a constant $C$ independent of $t$ such that on $K$ we have
$$C^{-1}p^*(\omega_N+\omega_{\rm SF})\leq \lambda_t^*p^*\ti{\omega}_t\leq C(\omega_N+\omega_{\rm SF}),$$
and
\begin{equation}\label{cinfty}
\|\lambda_t^*p^*\ti{\omega}_t\|_{C^k(K,g_E)}\leq C,
\end{equation}
which is the same result as in \cite[Lemma 4.2 and Proposition 4.3]{GTZ} but without the need to use translations by holomorphic sections.

For later use, let us also assume that the fibration $f$ admits a holomorphic Lagrangian section $\sigma:N_0\to M_0$. As explained for example in \cite[p.43]{Fr}, \cite[Proposition 3.5]{Hw} or \cite[Proposition 2.4]{HO}), using this section and the holomorphic symplectic form on $M_0$, we get an isomorphism $M_0\cong T^{*(1,0)}N_0/\check{\Lambda},$ where $\check{\Lambda}\subset T^{*(1,0)}N_0$ is a lattice bundle, and let $p:T^{*(1,0)}N_0\to M_0$ be the natural projection (over any coordinate ball $B\subset N_0$ where the bundle is trivial, this agrees with the map $p$ as above). The dilations $\lambda_t$ are in fact well-defined as biholomorphisms $\lambda_t:T^{*(1,0)}N_0\to T^{*(1,0)}N_0$, which in trivializations over $B$ as above are given by the same formula as above.

We can now identify the smooth limit of the metrics $\lambda_t^*p^*\ti{\omega}_t$ on $T^{*(1,0)}N_0$ as $t\to 0$. The following is an improvement of \cite[Lemma 4.7]{GTZ}, again without the need to use translations by holomorphic sections:

\begin{proposition}\label{conver}
As $t\to 0$ we have
\begin{equation}\label{smooth}
\lambda_t^*p^*\ti{\omega}_t\to p^*(f^*\omega+\omega_{\rm SF}),
\end{equation}
smoothly on compact sets of $T^{*(1,0)}N_0$, where $\omega_{\rm SF}$ is given by Theorem \ref{semiflat}.
\end{proposition}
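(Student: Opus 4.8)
The plan is to combine the uniform estimate~\eqref{cinfty} with two exact equivariance properties that the model pieces enjoy under the dilations $\lambda_t$. By~\eqref{cinfty}, for every $k$ the forms $\lambda_t^*p^*\ti{\omega}_t$ are bounded in $C^k_{\rm loc}(T^{*(1,0)}N_0,g_E)$ uniformly in $t\in(0,1]$; hence by Arzel\`a--Ascoli and a diagonal argument every sequence $t_i\to 0$ admits a subsequence along which $\lambda_{t_i}^*p^*\ti{\omega}_{t_i}$ converges in $C^\infty_{\rm loc}(T^{*(1,0)}N_0)$ to a closed nonnegative real $(1,1)$-form $\omega_\infty$. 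It therefore suffices to prove that every such subsequential limit equals $p^*(f^*\omega+\omega_{\rm SF})$: the uniqueness of the limit then upgrades subsequential convergence to convergence of the whole family as $t\to 0$, which is the assertion.

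The identification rests on two identities. Since $f\circ p=\pi_0$ is the bundle projection $T^{*(1,0)}N_0\to N_0$ and $\pi_0\circ\lambda_t=\pi_0$, we have $\lambda_t^*p^*f^*\omega=\pi_0^*\omega=p^*f^*\omega$ for every $t$; and~\eqref{gnam} gives $t\,\lambda_t^*p^*\omega_{\rm SF}=p^*\omega_{\rm SF}$, so that $\lambda_t^*p^*(f^*\omega+t\,\omega_{\rm SF})=p^*(f^*\omega+\omega_{\rm SF})$ for every $t$. Thus if $\ti{\omega}_t$ coincided on $M_0$ with the semi-flat model $f^*\omega+t\,\omega_{\rm SF}$ we would already have $\lambda_t^*p^*\ti{\omega}_t\equiv p^*(f^*\omega+\omega_{\rm SF})$; the content of the Proposition is that the genuine deviation of $\ti{\omega}_t$ from this model is negligible even after the anisotropic rescaling by $\lambda_t$. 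To establish this I would use, on the one hand, that $\ti{\omega}_t\to f^*\omega$ in $C^\infty_{\rm loc}(M_0)$ (valid here because the fibers are tori, by~\cite{GTZ,HT,TZ2}), and, on the other hand, that running the argument of~\cite[Lemma~4.2, Proposition~4.3]{GTZ} and~\cite{HT} that produces~\eqref{cinfty} in fact yields the sharper statement $\lambda_t^*p^*\!\left(\ti{\omega}_t-f^*\omega-t\,\omega_{\rm SF}\right)\to 0$ in $C^\infty_{\rm loc}(T^{*(1,0)}N_0)$ — the point being that in the frame adapted to $\lambda_t$ the deviation of $\ti{\omega}_t$ from the semi-flat model is of strictly lower order (uniformly over each compact fiber $M_y$), hence is still killed after the fiber directions are dilated by $t^{-1/2}$ and the fiber volume by $t^{-(m-n)}$. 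Granting this, the two identities above give $\lambda_t^*p^*\ti{\omega}_t=p^*(f^*\omega+\omega_{\rm SF})+o(1)$, which is the claim.

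An equivalent, more intrinsic route is to identify $\omega_\infty$ directly. Its restriction to a fiber is invariant under the lattices $t^{1/2}\check{\Lambda}$, which become finer and finer and whose real span is the whole fiber, so $\omega_\infty|_{M_y}$ is invariant under all fiberwise translations, hence flat; a period computation via~\eqref{gnam} shows it lies in the class $[\omega_M|_{M_y}]$, so by the uniqueness in Theorem~\ref{semiflat} it equals $\omega_{\rm SF}|_{M_y}$. The horizontal part of $\omega_\infty$ is then pinned down by $\ti{\omega}_t\to f^*\omega$ on $M_0$ together with $\lambda_t^*p^*f^*\omega=p^*f^*\omega$, and the closedness of $\omega_\infty$, combined with the analogous structure of $\omega_{\rm SF}$ coming from~\eqref{eta}, forces agreement of the remaining mixed components.

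The main obstacle is exactly the step just described: because $\lambda_t$ magnifies the fiber directions by $t^{-1/2}$, only a deviation estimate with the correct anisotropic decay rate survives the rescaling, and supplying this is precisely what the sharp estimates of~\cite{GTZ,HT} underlying~\eqref{cinfty} are designed to do. Once the scaling bookkeeping is set up, the remaining verifications — closedness and nonnegativity of $\omega_\infty$, uniformity of the estimates over the compact fibers, and the matching of the mixed components — are routine.
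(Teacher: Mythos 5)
Your soft framework (uniform $C^\infty_{\rm loc}$ bounds from \eqref{cinfty}, Arzel\`a--Ascoli, then identify every subsequential limit) is fine, but the step that actually identifies the limit is missing. Your first route rests on the assertion that the arguments producing \eqref{cinfty} ``in fact yield'' $\lambda_t^*p^*\bigl(\ti{\omega}_t-f^*\omega-t\,\omega_{\rm SF}\bigr)\to 0$ in $C^\infty_{\rm loc}$. Given the two equivariance identities you record, that statement \emph{is} the Proposition; it is not contained in \cite[Lemma 4.2, Proposition 4.3]{GTZ} or \cite{HT}, which give uniform two-sided metric bounds and derivative bounds but no convergence, and saying that ``the deviation is of strictly lower order, uniformly, hence killed by the dilation'' is exactly the claim to be proved, so as written this is circular. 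Your second, intrinsic route does pin down the fiberwise part of $\omega_\infty$ (invariance under the shrinking lattices plus the period computation), and invariance under all Gauss--Manin-flat translations together with $\ti{\omega}_t\to f^*\omega$ identifies the horizontal-horizontal part; but the claim that closedness then ``forces agreement of the remaining mixed components'' is unjustified and false as stated: there exist closed, fiberwise-translation-invariant real $(1,1)$-forms with purely mixed components (e.g.\ $\ddbar\bigl(\mathrm{Im}(g(y))\,\mathrm{Im}(z)\bigr)$ on $B\times\mathbb{C}$ with $g$ holomorphic, which equals a multiple of $g'(y)\,dy\wedge d\ov{z}+\ov{g'(y)}\,dz\wedge d\ov{y}$), so closedness, positivity, translation-invariance, the fiber restriction and the horizontal part do not determine $\omega_\infty$. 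To extract the mixed part from $\ti{\omega}_t\to f^*\omega$ you would need a rate (mixed coefficients $o(t^{1/2})$ and fiber-fiber coefficients $t\,\omega_{\rm SF}+o(t)$ in a flat frame), which qualitative convergence does not provide.

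The missing input is potential-theoretic, and it is precisely how the paper closes this gap: write $\ti{\omega}_t=f^*\omega_N+t\omega_M+\ddbar\vp_t$ with $|\vp_t|\le C$ (from \cite{To1}); over a coordinate ball use \cite[Proposition 3.1]{GTZ} to write $\omega_M=T_{\ti{\sigma}}^*\omega_{\rm SF}+\ddbar\xi$ for some local holomorphic section $\ti{\sigma}$; then $t\lambda_t^*p^*T_{\ti{\sigma}}^*\omega_{\rm SF}=\ddbar\,\eta\bigl(y,z+t^{1/2}\ti{\sigma}(y)\bigr)\to p^*\omega_{\rm SF}$ by the explicit quadratic formula \eqref{eta} --- note that the harmlessness of the translation by $\ti{\sigma}$ after rescaling is a computation, not automatic, and it is one of the points of the Proposition; finally the remaining term $\ddbar u_t$, with $u_t=(\vp_t+t\xi)\circ p\circ\lambda_t$, is controlled by combining $|u_t|\le C$ with the $C^\infty$ bounds coming from \eqref{cinfty} and the $C^0$-convergence $\vp_t\to\vp\circ f$, as in \cite[Lemma 4.7]{GTZ}, giving $u_t\to\vp\circ f\circ p$ smoothly and hence the limit $p^*(f^*\omega+\omega_{\rm SF})$, mixed components included. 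Some such quantitative use of the uniformly bounded K\"ahler potentials (or an equivalent rate estimate) must enter your argument; without it the identification of $\omega_\infty$ does not go through.
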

\begin{proof}
Fix any coordinate ball $B\subset N\backslash S'$.
Thanks to \cite[Proposition 3.1]{GTZ} we can find a holomorphic section $\ti{\sigma}:B\to f^{-1}(B)$ of $f$ and a smooth function $\xi$ on $f^{-1}(B)$ so that
$$\omega_M=T_{\ti{\sigma}}^*\omega_{\rm SF}+\ddbar \xi,$$
holds on $f^{-1}(B)$, where $T_{\ti{\sigma}}:f^{-1}(B)\to f^{-1}(B)$ is given by fiberwise translation by $\ti{\sigma}$.
We can now follow the argument of \cite[Lemma 4.7]{GTZ}, with some small modifications. Recall that we have
$$\ti{\omega}_t=f^*\omega_N+t\omega_M+\ddbar\vp_t,$$
with $|\vp_t|\leq C$ (see \cite{To1}).
On $p^{-1}(f^{-1}(B))\cong B\times\mathbb{C}^{m-n}$ we may then write
$$\lambda_t^*p^*\ti{\omega}_t=p^*f^*\omega+t\lambda_t^*p^*T_{\ti{\sigma}}^*\omega_{\rm SF}+\ddbar u_t,$$
where we have set $u_t=\vp_t\circ p\circ\lambda_t+t\xi\circ p\circ\lambda_t,$
and thanks to Theorem \ref{semiflat} we also have
$p^*\omega_{\rm SF}=\ddbar\eta,$
where $\eta$ is explicitly given by \eqref{eta}. The map $T_{\ti{\sigma}}$ is induced by the translation $(y,z)\mapsto (y,z+\ti{\sigma}(y))$ on $B\times\mathbb{C}^{m-n}$ (which we will also denote by $T_{\ti{\sigma}}$), and so this gives
$$t\lambda_t^*p^*T_{\ti{\sigma}}^*\omega_{\rm SF}=t\ddbar(\eta\circ T_{\ti{\sigma}}\circ \lambda_t),$$
and by direct inspection we see that this converges smoothly on compact sets to $p^*\omega_{\rm SF}$ as $t\to 0$.
Indeed, using \eqref{eta}, we see that
$$t(\eta\circ T_{\ti{\sigma}}\circ \lambda_t)(y,z)=t\eta(y,t^{-\frac{1}{2}}z+\ti{\sigma}(y))=\eta(y,z+t^{\frac{1}{2}}\ti{\sigma}(y)),$$ which converges to $\eta(y,z)$ smoothly on compact sets.
Thanks to \eqref{cinfty}, we see that $\ddbar u_t$ also has uniform local $C^\infty$ bounds, and since $|u_t|\leq C$, we conclude that the functions $u_t$ are themselves locally uniformly bounded in $C^\infty$. Arguing then exactly as in
\cite[Lemma 4.7]{GTZ} we conclude that $u_t\to \vp\circ f\circ p$ smoothly on compact sets, where $\omega_N+\ddbar\vp=\omega$ is the limiting metric on $N_0$. This concludes the proof.
\end{proof}
Of course, the same proof shows that even if $f:M_0\to N_0$ does not have a holomorphic Lagrangian section, we still obtain the convergence in \eqref{smooth} but just on the preimage of any coordinate ball $B\subset N_0$ (since on its universal cover $B\times\mathbb{C}^{m-n}$ we still have the stretching maps $\lambda_t$).

\subsection{The discriminant locus}

As in the Introduction, let $M$ be a projective hyperk\"ahler manifold, and $f:M\to N$ a surjective holomorphic map with connected fibers onto a compact K\"ahler manifold $N$ with $\dim N<\dim M$. Then we know by Matsushita \cite{Ma,Ma2} that $\dim N=\frac{1}{2}\dim M$ and that $f$ is an equidimensional holomorphic Lagrangian torus fibration, which in particular gives an algebraic completely integrable system where it is a submersion (see section \ref{4.2}). Also, by Hwang \cite{Hw} we have that $N$ is biholomorphic to $\mathbb{CP}^n$ where $\dim M=2n$.

If $S'\subset N$ denotes the discriminant locus of $f$, then it is well-known that $S'$ must be nonempty (see e.g. \cite[Proposition 4.1]{Hw}, where it is shown that $S'$ is in fact necessarily a divisor). We note here the following even stronger result, proved by the authors in \cite{TZ} with an extra hypothesis:

\begin{theorem}\label{triv}
Let $f:M\to N$ be a holomorphic submersion with connected fibers between compact K\"ahler manifolds with $c_1(M)=0$ in $H^2(M,\mathbb{R})$ (i.e. $M$ is a Calabi-Yau manifold). Then
$f$ is a holomorphic fiber bundle with fiber $F$ and base $N$ both Calabi-Yau manifolds.
\end{theorem}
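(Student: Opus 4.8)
Here is my plan for proving Theorem \ref{triv}.

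\medskip

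The plan is to reduce the statement to the Calabi--Yau theorem and a parallel transport argument. First I would observe that since $f$ is a proper holomorphic submersion, by Ehresmann's theorem it is already a $C^\infty$ fiber bundle, so the content is purely to upgrade this to a \emph{holomorphic} bundle and to identify the fibers and base as Calabi--Yau. The key is to produce a Ricci-flat K\"ahler metric on the total space $M$ that is ``adapted'' to the fibration. Fix any K\"ahler metric $\omega_M$ on $M$; since $c_1(M)=0$, Yau's theorem \cite{Ya} gives a unique Ricci-flat K\"ahler metric $\ti\omega$ in $[\omega_M]$. Restricting $\ti\omega$ to each fiber $F_y=f^{-1}(y)$ gives a K\"ahler metric whose Ricci form is the restriction of $\Ric(\ti\omega)=0$ to $F_y$, hence each fiber is itself Ricci-flat K\"ahler, so $c_1(F_y)=0$ in $H^2(F_y,\RR)$ and each fiber is a Calabi--Yau manifold; in particular all fibers are diffeomorphic and have $c_1=0$. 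The same holds for $N$: pulling back, $f^*c_1(N)$ relates to $c_1(M)$ and $c_1$ of the fibers via the adjunction-type identity $K_M = f^*K_N \otimes K_{M/N}$, and the relative canonical bundle $K_{M/N}$ restricts trivially (up to torsion/numerically) on each fiber; combined with $c_1(M)=0$ this forces $c_1(N)=0$, so $N$ is Calabi--Yau as well.

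\medskip

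Next, to get the holomorphic local triviality, I would use a holonomy/splitting argument. The natural approach is the one from \cite{TZ}: show that after possibly passing to a finite \'etale cover, the Ricci-flat metric $\ti\omega$ splits as a product, using the Cheeger--Gromoll / de Rham decomposition together with the fact that the holonomy of a Ricci-flat K\"ahler metric on a compact Calabi--Yau manifold is special (contained in $SU(m)$) and, by the Beauville--Bogomolov decomposition theorem, $M$ up to finite \'etale cover is a product of a complex torus, Calabi--Yau manifolds with holonomy exactly $SU$, and hyperk\"ahler factors. One then argues that a fibration of such a product onto a lower-dimensional K\"ahler manifold must, on each factor, be either constant or an isomorphism onto a corresponding factor of the base, which forces $f$ to be (a finite quotient of) a holomorphic projection — hence a holomorphic fiber bundle. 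Alternatively, and perhaps more directly in the spirit of this paper, one can argue via the semi-flat / period-map machinery: since $S'=\emptyset$ the whole of $M$ plays the role of $M_0$, the period map $Z:N\to \mathfrak H$ (or the appropriate period domain for the fibers) is holomorphic and defined on all of the compact base $N$, hence constant by the maximum principle (or by compactness of $N$ mapping into a Kobayashi-hyperbolic period domain, modulo monodromy, which must then be finite and trivializes on a cover), and a constant period map means the complex structures of the fibers do not vary, giving holomorphic local triviality directly.

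\medskip

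I would then assemble these pieces: constancy of the period map gives that $f$ is holomorphically locally trivial with fiber $F$ a Calabi--Yau manifold; the adjunction identity forces $N$ to be Calabi--Yau; and Ehresmann plus the constancy of complex structure upgrades local triviality to a genuine holomorphic fiber bundle structure. The main obstacle I expect is the step controlling the monodromy of the fibration around loops in $N$ — even with a constant period map, the bundle could a priori be a nontrivial holomorphic bundle with structure group the (discrete) mapping class group or the automorphism group of $F$; showing that this monodromy preserves the holomorphic structure and is compatible with a bundle reduction is the delicate point. In \cite{TZ} this was handled under an extra hypothesis, and the claim here is that this hypothesis can be removed, presumably by exploiting more carefully the Ricci-flat metric on $M$ (via \cite{Ya}) and the structure theory of compact Calabi--Yau manifolds to rule out exotic monodromy; this is where I would concentrate the real work.
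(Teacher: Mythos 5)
Your proposal does not close the actual gap that this theorem fills. The content of Theorem \ref{triv} beyond \cite{TZ} is precisely the passage from ``$c_1(M)=0$ in $H^2(M,\mathbb{R})$'' to the setting where \cite[Theorem 1.3]{TZ} applies, and your plan defers exactly this step (``this is where I would concentrate the real work''). The paper's resolution is short but specific, and you never produce it or an equivalent: since $M$ is compact K\"ahler with $c_1(M)=0$, the canonical bundle $K_M$ is torsion, so there is a finite \'etale cover $\ti{M}\to M$ with $K_{\ti{M}}$ trivial; the composition $\ti{M}\to N$ may have disconnected fibers, so one takes its Stein factorization $\ti{M}\to\ti{N}\to N$ (with $\ti{N}\to N$ finite \'etale and $\ti{M}\to\ti{N}$ a submersion with connected fibers), applies \cite[Theorem 1.3]{TZ} to $\ti{M}\to\ti{N}$, and then descends the fiber bundle structure to $f:M\to N$ via \cite[Lemma 4.5]{Fu}; the Calabi--Yau property of $N$ and $F$ then follows because they are finite unramified quotients of $\ti{N}$ and $\ti{F}$. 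Without some such reduction your argument never uses the hypothesis in a way that goes beyond what \cite{TZ} already assumed.

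There are also two concrete errors in the supporting steps. First, the restriction of a Ricci-flat K\"ahler metric to a fiber is \emph{not} Ricci-flat (Ricci curvature does not restrict to submanifolds); that the fibers have $c_1=0$ follows instead from adjunction together with the triviality of the normal bundle $N_{F_y/M}\cong f^*T_yN$, and similarly your claim that the adjunction identity ``forces $c_1(N)=0$'' is not a proof --- knowing $K_{M/N}$ is trivial on fibers does not by itself control $c_1(N)$ (in the paper this is an output of \cite{TZ} and the covering argument, not an input). Second, the alternative period-map route is unsound as stated: a holomorphic period map on a compact base need not be constant (non-isotrivial families of polarized abelian varieties over compact Shimura curves exist), so neither the maximum principle nor hyperbolicity of $\mathfrak{H}_n/\Gamma$ gives isotriviality; the Calabi--Yau condition on the total space must enter essentially, which is exactly what the Weil--Petersson/Ricci-flat analysis in \cite{TZ} provides and what your sketch omits.
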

\begin{proof}
It is well-known (see e.g. \cite{To4}) that $K_M$ is torsion in $\mathrm{Pic}(M)$, so there is a finite \'etale cover $\pi:\ti{M}\to M$ with $\ti{M}$ connected and with $K_{\ti{M}}$ trivial.
The composition $f\circ\pi:\ti{M}\to M\to N$ is a holomorphic submersion with possibly disconnected fibers, so we consider its Stein factorization $\ti{M}\overset{p}{\to}\ti{N}\overset{q}{\to}N$ where $\ti{N}$ is a (connected) compact K\"ahler manifold, $p$ is a holomorphic submersion with connected fibers and $q$ is a finite \'etale cover (see e.g. \cite[Lemma 2.4]{FG}). Therefore $p:\ti{M}\to\ti{N}$ satisfies the hypothesis of \cite[Theorem 1.3]{TZ}, and so it is a holomorphic fiber bundle with base $\ti{N}$ and fiber $\ti{F}$ both Calabi-Yau manifolds. By \cite[Lemma 4.5]{Fu}, it follows that $f:M\to N$ is a holomorphic fiber bundle as well. But we have finite unramified coverings $\ti{N}\overset{q}{\to} N$ and $\ti{F}\overset{\pi|_{\ti{F}}}{\to} F$, and so $N$ and $F$ are Calabi-Yau manifolds as well.
\end{proof}

\subsection{Estimates for the special K\"ahler metric  near the discriminant locus}

In this subsection we state our main estimate in the hyperk\"ahler setting, which implies the estimate \eqref{to0}. This estimate is then proved in section \ref{sectspec}.

As in the assumptions of Theorem \ref{theorem1}, let $M$ be a compact projective hyperk\"ahler $2n$-manifold, with holomorphic symplectic $2$-form $\Omega,$ and let $[\alpha]$ be an integral K\"{a}hler class on $M$. Assume that $M$ admits a surjective holomorphic map  $f: M \to N$  with connected fibers onto a compact K\"ahler $n$-manifold $N$. As before we let $S'\subset N$ be the discriminant locus of $f$, and let $N_0=N\backslash S',$ $M_0=f^{-1}(N_0)$.
The fibers $M_y=f^{-1}(y),y\in N_0,$ are holomorphic Lagrangian $n$-tori \cite{Ma,Ma2} so that $f:M_0\to N_0$ is an algebraic completely integrable system (see section \ref{4.2}). The base $N$ is known to be isomorphic to $\mathbb{CP}^n$ by \cite{Hw}, but we will not need this. The fibers $M_y,y\in N_0$, are Abelian varieties with the polarization $[\alpha_{y}]=[\alpha|_{M_{y}}]$, which is of type $(d_{1}, \cdots , d_{n})$ for some $d_i\in\mathbb{N}$. Let $\pi:\ti{N}\to N$ be a modification, which is an isomorphism over $N_0$, such that $E=\pi^{-1}(S')$ is a divisor with simple normal crossings, so near any point of $E$ there are coordinates $(u_{1}, \cdots, u_{n})$ on an open set $U\subset\ti{N}$ (which in these coordinates is the unit polydisc in $\mathbb{C}^n$) such that $E\cap U=\{u_{1}\cdots u_{k}=0\}$, for some $1\leq k\leq n$, and $\{u_i=0\}=E_{j_i}\cap U$  for some $1\leq j_i\leq\mu$ and all $1\leq i\leq k$.

By Section 3 of \cite{Fr}, there is a special K\"{a}hler metric $\omega$ on $N_{0}$ induced by the  algebraic  completely   integrable system $(f: M_{0} \rightarrow N_{0}, [\alpha],\Omega)$, and in \cite{GTZ2} we shows that this metric is equal to the collapsed smooth limit of the Ricci-flat metrics $\ti{\omega}_t$ as $t\to 0$, obtained in \cite{To1,GTZ}. The goal of this subsection is to study the asymptotic behaviour of $\pi^*\omega$ near $E$. As in section \ref{sectmain} we write $E=\bigcup_{j=1}^\mu E_j$ for the decomposition of $E$ into irreducible components.

\begin{theorem}\label{prop1} There are positive integers $m_i\in\mathbb{N}$, $1\leq i\leq\mu$, and a constant $C>0$ such that given any $y\in E$ and any local chart on $U$ as above, if we define the local uniformizing map $q:
\tilde{U} \rightarrow U$ ($\ti{U}$ is also the unit polydisc in $\mathbb{C}^n$) by
\begin{equation}\label{mapq}
q(t_1,\dots,t_n)=(t_1^{m_{j_1}},\dots,t_k^{m_{j_k}},t_{k+1},\dots,t_n),
\end{equation}
then on $\tilde{U}\backslash q^{-1}(E)$ we have
 $$q^{*} \pi^*\omega = \sqrt{-1} \sum_{i,j=1}^{n}g_{i\bar{j}}dt_{i}\wedge d\bar{t}_{j} $$  where
  \begin{equation}\label{estt}
  |g_{i\bar{j}}|\leq C(1-\varepsilon(i)\log |t_{i}|-\varepsilon(j)\log |t_{j}|), \  \ \ i,j=1, \cdots, n,
   \end{equation}
    where $\varepsilon(x)=1$ if $1\leq x \leq k$, and $\varepsilon(x)=0$ if $k+1\leq x \leq n$. Furthermore, the functions $g_{i\ov{j}}$ with $i,j>k$ extend continuously to all of $\ti{U}$. In particular, \eqref{to0} holds.
\end{theorem}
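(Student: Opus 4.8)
The plan is to show that the coefficients of the special K\"ahler metric $\omega$ near $E$ are, up to holomorphic factors, imaginary parts of periods of the weight-one variation of Hodge structure $\mathbb{H}=R^1f_*\mathbb{Z}$ carried by the Abelian variety fibers of $f$, and then to control the blowup rate of these periods near $E$ using degenerations of polarized Hodge structures. First I would use the description, going back to Donagi--Witten \cite{DW} and Freed \cite{Fr}, of the special K\"ahler metric of the algebraic completely integrable system $(f:M_0\to N_0,[\alpha],\Omega)$ in terms of period data: over any simply connected open $U\subset N_0$ there are holomorphic \emph{special coordinates} $\zeta^1,\dots,\zeta^n$, whose differentials $d\zeta^p$ are flat sections of the lattice bundle $\check\Lambda\subset T^{*(1,0)}N_0$ spanning a Lagrangian sub-local system of $\check\Lambda\otimes\mathbb{R}$, together with the period matrix $Z=(Z_{pr})\colon U\to\mathfrak{H}_n$ of the fibers, determined by $d\zeta^p_D=\sum_r Z_{pr}\,d\zeta^r$ where $\zeta^p_D$ are the complementary periods, so that $\omega$ is, up to a fixed positive constant, $\frac{\sqrt{-1}}{2}\sum_{p,r}(\mathrm{Im}\,Z_{pr})\,d\zeta^p\wedge d\ov{\zeta^r}$. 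Hence, writing again $\zeta^p$ for the pullbacks to $\ti U$ of these (a priori multivalued) period functions, in the coordinates $(t_1,\dots,t_n)$ of the statement we get, up to the same constant,
\[
g_{i\ov j}=\sum_{p,r=1}^n \frac{\partial\zeta^p}{\partial t_i}\,\ov{\frac{\partial\zeta^r}{\partial t_j}}\,\mathrm{Im}\,Z_{pr},
\]
and since both the $\zeta^p$ and the entries $Z_{pr}$ are periods of $\mathbb{H}$ over integral $1$-cycles on the fibers, the whole problem reduces to understanding these periods near $q^{-1}(E)$.

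Next I would invoke the theory of degenerations of Hodge structures. By the monodromy theorem the local monodromy of $\mathbb{H}$ around each component $E_i$ is quasi-unipotent, and I would define $m_i\in\mathbb{N}_{>0}$ to be the least integer for which the $m_i$-th root base change makes it unipotent; equivalently, after pulling back by the uniformizing map $q$ of \eqref{mapq} the local system $q^*\mathbb{H}$ has unipotent local monodromies $\exp(N_c)$, $1\leq c\leq k$, with $N_c$ commuting nilpotents adapted to the polarization, and these $m_i$ are the integers appearing in the statement. By the several-variable nilpotent orbit theorem (Schmid, Cattani--Kaplan--Schmid) one has $q^*Z(t)=\frac{1}{2\pi\sqrt{-1}}\sum_{c=1}^k(\log t_c)N_c+A(t)$ with $A$ holomorphic on all of $\ti U$, while $\mathrm{Im}\,q^*Z\geq 0$ and each $N_c\geq 0$ is positive semidefinite. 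Choosing the $d\zeta^p$ to span a Lagrangian sublattice contained in the common monodromy-invariant subspace $\bigcap_c\ker N_c$ (a coisotropic subspace of dimension $\geq n$), the functions $\zeta^p$ become single-valued and remain bounded near $q^{-1}(E)$, hence holomorphic there, so their derivatives $\partial\zeta^p/\partial t_i$ are holomorphic; moreover, by the $SL_2$-orbit theorem one can choose the symplectic $\mathbb{Z}$-basis of $H_1$ of the fiber so that the $\log|t_c|$-growth of $\mathrm{Im}\,q^*Z$ is confined to the sub-block indexed by the cycles vanishing at $E_{j_c}$.

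Substituting these asymptotics into the displayed formula for $g_{i\ov j}$, and using that the Jacobian of $q$ is diagonal with $\gamma$-th entry $m_{j_\gamma}t_\gamma^{m_{j_\gamma}-1}$ (which vanishes along $\{t_\gamma=0\}$) for $\gamma\leq k$ and $1$ for $\gamma>k$, a careful analysis should show that the only unbounded contributions to $g_{i\ov j}$ are those proportional to $\log|t_i|$ or $\log|t_j|$ and coming from the $N_{j_i}$ and $N_{j_j}$ blocks, which yields \eqref{estt}; when $i,j>k$ these terms are absent, the holomorphic part $A$ of $q^*Z$ is continuous up to $q^{-1}(E)$, and hence $g_{i\ov j}$ extends continuously. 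Finally, \eqref{estt} gives $(g_{i\ov j})\leq C n\,(1-\sum_{c\leq k}\log|t_c|)(\delta_{ij})$ as Hermitian matrices, i.e.\ $q^*\pi^*\omega\leq C(1-\sum_{c=1}^k\log|t_c|)\,\omega_E$ on $\ti U\setminus q^{-1}(E)$; since $q$ pulls an orbifold metric $\omega_{\rm orb}$ of order $m_i$ along $E_i$ back to a smooth metric uniformly equivalent to $\omega_E$, and $q^*(1-\sum_i\log|s_i|_{h_i})\gtrsim 1-\sum_{c\leq k}\log|t_c|$ near $q^{-1}(E)$, this descends to the bound \eqref{to0} with $d=1$ over $U\setminus E$; covering $E$ by finitely many such charts and using the compactness of $N$ then gives \eqref{to0} on all of $\pi^{-1}(N_0)$.

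I expect the main obstacle to be the step I called ``a careful analysis'': a priori the derivatives $\partial\zeta^p/\partial t_i$ and the entries $\mathrm{Im}\,Z_{pr}$ could each grow logarithmically (and after the change of coordinates the $\partial\zeta^p/\partial t_i$ could even acquire $|t_i|^{-1}$ poles), so that a naive bound for $g_{i\ov j}$ would be of size $(-\log|t|)^2$ or $\sum_{c\leq k}\log|t_c|$, which is too weak both for \eqref{estt} and for part (c) of the Conjecture. Ruling this out and obtaining exactly \eqref{estt} (and the continuity for $i,j>k$) requires choosing the special coordinates and the symplectic basis of $H_1$ of the fiber simultaneously adapted to all the commuting nilpotents $N_1,\dots,N_k$ and to the polarization, and then using the full strength of the several-variable $SL_2$-orbit theorem — equivalently, the Cattani--Kaplan--Schmid structure theory of polarized limiting mixed Hodge structures — rather than just the one-variable estimate $\mathrm{Im}\,Z=O(-\log|t|)$.
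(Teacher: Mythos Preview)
Your overall strategy matches the paper's: identify the special K\"ahler metric with period data via the Donagi--Witten/Freed description, make the monodromy unipotent by the base change $q$, and invoke the nilpotent orbit theorem to obtain $q^*Z(t)=Q(t)+\sum_{c=1}^k\frac{\log t_c}{2\pi\sqrt{-1}}\Delta_d\eta_c$ with $Q$ holomorphic. The paper also proves (as you anticipate) that the special coordinates $w_1,\dots,w_n$ extend holomorphically across $q^{-1}(E)$, though the argument is not that they are ``bounded'': rather, the lower eigenvalue bound on $\mathrm{Im}\,Z$ from the nilpotent orbit theorem shows the coefficients of $d w_p$ are $L^2$, hence extend (Lemma~\ref{extend}).

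The genuine gap is in the step you flag yourself. From your displayed formula $g_{i\bar j}=\sum_{p,r}\frac{\partial w_p}{\partial t_i}\overline{\frac{\partial w_r}{\partial t_j}}\,\mathrm{Im}\,Z_{pr}$ the naive bound is indeed $O(\sum_{c\leq k}\log|t_c|)$, and your proposed fix---using the $SL_2$-orbit/CKS theory to confine the $\log|t_c|$-growth of $\mathrm{Im}\,Z$ to a ``sub-block indexed by the cycles vanishing at $E_{j_c}$''---does not work as stated: the images of the nilpotents $N_c$ all lie in the single space $W_0=\sum_c\mathrm{Im}\,N_c$, so there is no $c$-dependent block decomposition of $\mathrm{Im}\,Z$, and the special coordinates $w_p$ have no reason to align with any such blocks. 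The paper avoids CKS entirely by a much more elementary device. It rewrites $q^*\omega=\tfrac14\sum_p(dw_p^*\wedge d\bar w_p-dw_p\wedge d\bar w_p^*)$ using the \emph{conjugate} coordinates, so that $g_{i\bar j}$ is bilinear in the bounded quantities $\partial w_p/\partial t_j$ and the a~priori logarithmic quantities $G_{pi}:=\partial w_p^*/\partial t_i=\sum_r Z_{pr}\,\partial w_r/\partial t_i$. Expanding via the nilpotent orbit theorem gives $G_{pi}=A_{pi}+\sum_{c}\frac{\log t_c}{2\pi\sqrt{-1}}B^c_{pi}$ with $A,B^c$ holomorphic. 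The key observation is then that $dw_p^*$ is a \emph{closed} $1$-form, so $\partial G_{pi}/\partial t_\ell=\partial G_{p\ell}/\partial t_i$; comparing the $t_\ell^{-1}$ poles on both sides for $\ell\neq i$ forces $B^\ell_{pi}=t_\ell\tilde B^\ell_{pi}$, i.e.\ the $\log t_\ell$ contribution with $\ell\neq i$ is actually $t_\ell\log t_\ell=O(1)$. Thus $G_{pi}=\frac{\varepsilon(i)\log t_i}{2\pi\sqrt{-1}}B^i_{pi}+O(1)$, which immediately yields \eqref{estt} and the continuity for $i,j>k$. No $SL_2$-orbit theorem is needed; the cancellation comes entirely from the integrability of the special K\"ahler structure (existence of conjugate coordinates), which is exactly the extra information beyond a bare weight-one VHS.
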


Since Theorem \ref{prop1} implies the estimate \eqref{to0}, once we complete the proof of Theorem \ref{prop1}, it will follow that parts (a) and (b) of Conjecture \ref{con1} hold, thanks to Theorem \ref{main}. We prove Theorem \ref{prop1} in the next section, which requires a detailed study of special K\"{a}hler metrics.

\subsection{The homeomorphism type of the Gromov-Hausdorff limit}
In this subsection we show how the estimates in Theorem \ref{prop1} also imply part (c) of Conjecture \ref{con1}. Therefore in the following we assume that we are in the setting of Theorem \ref{theorem1} and we assume the validity of Theorem \ref{prop1}, which will be proved in section \ref{sectspec}.

Let $(X,d_X)$ be the Gromov-Hausdorff limit of $(M,\ti{\omega}_t)$, which by Theorems \ref{main} and \ref{prop1} is isometric to the metric completion of $(N_0,\omega)$ and let $\psi: (N_{0}, \omega) \hookrightarrow (X, d_{X})$ be the isometric embedding. Our goal is to show that $X$ is homeomorphic to $N$ (which is of course homeomorphic to $\mathbb{CP}^n$).

Given any K\"ahler metric $\omega_N$ on $N$, the Yau Schwarz Lemma estimate $\ti{\omega}_{t} \geq C^{-1} f^{*}\omega_{N}$ proved in \cite{To1} gives a uniform Lipschitz constant bound for the map $f:(M, \ti{\omega}_{t})\rightarrow (N, \omega_{N}),$ independent of $t$ and so, up to passing to a sequence $t_i\to 0$, we obtain in the limit a Lipschitz surjective  map $h: (X, d_{X})\rightarrow (N, \omega_{N})$ with $h\circ \psi=\mathrm{Id}$.

Let also $\pi: \tilde{N}\rightarrow N$ be the modification (sequence of blowups with smooth centers) such that $E=\pi^{-1}(S')=\tilde{N}\backslash \pi^{-1}(N_{0})$ is a simple normal crossings divisor.
\begin{proposition}\label{bs}
There is a continuous surjective map $p: \tilde{N}\rightarrow X$ such that $\pi=h \circ p$.
\end{proposition}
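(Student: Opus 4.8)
The plan is to construct $p$ by sending a point of $\ti N$ to the limit, in $X$, of the $\psi$-images of nearby points of $N_0$, and to show this is well-defined and continuous using the uniform diameter bounds for the sets $V_i(\rho)$ established in the proof of Theorem \ref{main}. More precisely, for $y\in\pi^{-1}(N_0)$ we simply set $p(y)=\psi(\pi(y))$, which is already continuous there; the work is to extend $p$ across $E$. Given $y\in E$, pick a chart $U$ adapted to the normal crossings structure as in Theorem \ref{prop1}, and for each small $\rho>0$ let $V(\rho)\ni y$ be the shrinking polydisc neighborhood built in the proof of Theorem \ref{main}. The key point proved there is that $\mathrm{diam}_{d_X}\big(\ov{\psi(\pi(V(\rho))\cap N_0)}\big)\leq C\rho(-\log\rho)^d\to 0$ as $\rho\to 0$. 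Hence the closed sets $\ov{\psi(\pi(V(\rho))\cap N_0)}$, which are nested and have diameters tending to $0$, have an intersection consisting of a single point of $X$ (using that $X$ is complete); I define $p(y)$ to be this point. One checks this is independent of the chart and of the choice of defining neighborhoods, because any two such families of neighborhoods of $y$ are cofinal in each other, so their intersections agree.

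Next I would verify continuity of $p$ at points $y\in E$. Given $\epsilon>0$, choose $\rho$ with $C\rho(-\log\rho)^d<\epsilon/2$; then $V(\rho)$ is an open neighborhood of $y$ in $\ti N$, and for any $y'\in V(\rho)$ — whether $y'\in\pi^{-1}(N_0)$ or $y'\in E$ — we have $p(y')\in\ov{\psi(\pi(V(\rho))\cap N_0)}$ (for $y'\in E$ one uses that small neighborhoods of $y'$ eventually sit inside $V(\rho)$, by openness), so $d_X(p(y),p(y'))\leq\mathrm{diam}_{d_X}\ov{\psi(\pi(V(\rho))\cap N_0)}<\epsilon$. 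This gives continuity on all of $\ti N$. The identity $\pi=h\circ p$ holds on the dense open set $\pi^{-1}(N_0)$ by $h\circ\psi=\mathrm{Id}$, hence everywhere by continuity of both sides. Finally, surjectivity of $p$: the image $p(\ti N)$ is compact (continuous image of a compact space), hence closed in $X$, and it contains the dense subset $X_0=\psi(N_0)=p(\pi^{-1}(N_0))$, so $p(\ti N)=X$.

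The main obstacle is making the definition of $p$ on $E$ genuinely well-defined and chart-independent: one must ensure that the nested-diameter argument produces the \emph{same} point regardless of which adapted chart and which cofinal family of shrinking neighborhoods one uses. The cleanest way is to observe that for $y\in E$ the collection of all open neighborhoods $W$ of $y$ in $\ti N$ forms a directed set, that each such $W$ contains some $V(\rho)$ and is contained in some $V(\rho')$, and that $\rho\mapsto\mathrm{diam}_{d_X}\ov{\psi(\pi(V(\rho))\cap N_0)}\to 0$; then $\big\{\ov{\psi(\pi(W)\cap N_0)}\big\}_W$ is a family of closed sets with the finite intersection property whose diameters have infimum $0$, so by completeness of $X$ it has a unique common point, and that point manifestly does not depend on any choices. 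A secondary subtlety is that the covering sets $V_i(\rho)$ in Theorem \ref{main} were indexed globally; here I instead want, for each fixed $y\in E$, a single distinguished shrinking neighborhood, which is no problem since the construction in Theorem \ref{main} is local and produces such a $V(\rho)$ around any point of $E$ once a chart is fixed.
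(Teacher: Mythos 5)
Your proposal is correct and follows essentially the same route as the paper: both extend $\psi\circ\pi$ across $E$ using the diameter bound $C\rho(-\log\rho)^d\to 0$ from the proof of Theorem \ref{main} together with completeness of $X$, then get $\pi=h\circ p$ from $h\circ\psi=\mathrm{Id}$ and surjectivity from compactness (the paper phrases the extension via Cauchy sequences and lifts Cauchy sequences to $\ti{N}$ for surjectivity, while you use nested closed sets and density of $X_0$ in the closed image, which are equivalent). The only slip is your claim that every neighborhood $W$ of $y$ is ``contained in some $V(\rho')$,'' which is false for large $W$ but also unnecessary, since cofinality of the $V(\rho)$'s in one direction already gives chart-independence.
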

\begin{proof}
 We have the homeomorphism $p=\psi\circ\pi: \pi^{-1}(N_{0}) \rightarrow \psi(N_{0})\subset X$. We claim that $p$ extends to the desired map $p: \tilde{N}\rightarrow X$. We use some of the notation as in the proof of Theorem \ref{main}.

To see this, let $y\in \tilde{N}\backslash \pi^{-1}(N_{0})$, and let $y_{i} \in \pi^{-1}(N_{0})$ be such that $y_{i} \rightarrow y$ in $\tilde{N}$. For any small neighborhood $\Delta^{n}(\rho)$ of $y$, we have $y_{i}\in\Delta^{n}(\rho)$ for $i$ sufficiently large, and for all $m\geq 0$ we have $$d_{\pi^*\omega} (y_{i},y_{i+m})\leq C \rho (\log \rho)^{d}\rightarrow 0.$$ Thus $\pi(y_{i})$ is a Cauchy sequence in $(N_{0}, \omega)$, and $\pi(y_{i})$ converges to a unique point $\bar{y}$ in $X$. We define $p(y)=\bar{y}$. The map $p$ is well-defined because if $y'_i$ is another sequence that converges to $y$ then for every small $\rho>0$ we see that for all $i$ sufficiently large $y_i,y'_i\in\Delta^{n}(\rho)$ and
$$d_{\pi^*\omega} (y_{i},y'_i)\leq C \rho (\log \rho)^{d}\rightarrow 0,$$
so that necessarily $\pi(y'_i)\to \bar{y}$ as well. A similar argument shows that $p$ is continuous. To see that $p$ is surjective, given any point $\bar{y}\in X$, there is a corresponding Cauchy sequence $y_i$ in $(N_0,\omega)$ which converges to it. Choose any preimages $y'_i\in\ti{N}, \pi(y'_i)=y_i.$ By compactness of $\ti{N}$, a subsequence will converge to some point $y'\in\ti{N}$, and we then have $p(y')=\bar{y}$.

If $y'=\pi (y) \in N$, then $\pi (y_{i}) =h(p(y_{i}))\rightarrow y'$, and thus $h(\bar{y})=y'$. This completes the proof.
\end{proof}

In particular, Proposition \ref{bs} already implies that $X$ is homeomorphic to $N$ if the discriminant locus $S'=N\backslash N_0$ of $f$ is a simple normal crossings divisor. Indeed, in this special case we may take $\ti{N}=N, \pi=\mathrm{Id}$, to conclude that $h\circ p=\mathrm{Id}$. Therefore $p$ must be injective, and it is then a continuous bijection between compact Hausdorff spaces, hence a homeomorphism.

To prove that $X$ is homeomorphic to $N$ in general, the key is the following:
\begin{proposition}\label{key}
Given any two points $y,\hat{y}\in\ti{N}$ such that $\pi(y)=\pi(\hat{y})$, we have that $p(y)=p(\hat{y})$.
\end{proposition}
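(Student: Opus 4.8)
The statement we must prove is Proposition \ref{key}: if $y,\hat y\in\ti N$ have $\pi(y)=\pi(\hat y)=:y'\in N$, then $p(y)=p(\hat y)$ in $X$. The plan is to reduce this to the connectedness of the fiber $\pi^{-1}(y')$ together with the local estimate \eqref{too1} (equivalently \eqref{estt}) which says that $\pi^*\omega$ shrinks the small polydisc neighborhoods $\Delta^n(\rho)$ to diameter $O(\rho(-\log\rho)^d)\to 0$. First I would recall that $\pi:\ti N\to N$ is a finite sequence of blowups along smooth centers, so the fiber $Z:=\pi^{-1}(y')$ is a connected (indeed projective, and in fact rational) compact complex variety; this is the standard structural fact about modifications, and I would cite it rather than prove it. If $y'\in N_0$ then $Z$ is a single point and there is nothing to prove, so assume $y'\in S'$, hence $Z\subset E$.

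The core of the argument is then the following: I claim that for every $z\in Z$ and every $\ve>0$ there is a neighborhood $W_z$ of $z$ in $\ti N$ such that $\mathrm{diam}_{d_X}\big(\overline{p(W_z\cap\pi^{-1}(N_0))}\big)<\ve$. This is exactly what the argument in the proof of Proposition \ref{bs} already gives: choosing $W_z$ to be a polydisc neighborhood $\Delta^n(\rho)$ adapted to the normal crossings structure with $\rho$ small enough that $C\rho(-\log\rho)^d<\ve$, the estimate \eqref{too1} (after pulling back by the uniformizing map $q$, as in the proof of Theorem \ref{main}) shows that any two points of $W_z\cap\pi^{-1}(N_0)$ are joined by a path of $\pi^*\omega$-length $<\ve$, and since $p=\psi\circ\pi$ is a local isometry on $\pi^{-1}(N_0)$, their images in $X$ are within $\ve$ of each other; continuity of $p$ (established in Proposition \ref{bs}) extends this to the closure. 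Now cover the connected compact set $Z$ by finitely many such $W_{z_1},\dots,W_{z_L}$; by connectedness we may order them so that consecutive ones overlap, and $y$ lies in some $W_{z_a}$, $\hat y$ in some $W_{z_b}$. Walking along the chain and using the triangle inequality, $d_X(p(y),p(\hat y))\le$ (number of sets)$\cdot\,(\text{a few }\ve\text{'s})\le L\cdot 3\ve$ say — but here I need to be slightly careful: $L$ depends on $\rho$. The clean way around this is to fix the combinatorics first: choose an honest finite cover $Z\subset\bigcup_{j=1}^L W_{z_j}$ by connected open sets with a fixed nerve, then \emph{shrink} each $W_{z_j}$ (replacing the ambient polydisc radius $\rho$ by a small $\rho'$, keeping the same centers and the same overlap pattern) so that each piece has $d_X$-diameter $<\ve/L$; then the chain estimate gives $d_X(p(y),p(\hat y))<\ve$ for every $\ve>0$, hence $p(y)=p(\hat y)$.

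The main obstacle, and the point that needs the most care, is the uniformity issue just flagged: the number $N(\rho)$ of small polydiscs needed to cover $E$ (or $Z$) blows up as $\rho\to 0$ like $\rho^{-(2n-2)}$, so one cannot naively let $\rho\to0$ inside a chain of fixed length. The resolution is to decouple the two steps — first fix a finite good cover of $Z$ with a fixed nerve realizing the connectedness, then independently shrink the "width" of each chart — which works because shrinking a polydisc neighborhood of $z$ only decreases its $d_X$-image-diameter while preserving enough overlap along $Z$. A second, more minor technical point is that when the chart is adapted to the normal crossings divisor one should pull everything back by the local uniformizing map $q$ of \eqref{mapq} before invoking the length bound \eqref{too1}, exactly as in the proof of Theorem \ref{main}; on the uniformizing polydisc the stretching argument of that proof applies verbatim. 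Once Proposition \ref{key} is in hand, the map $p:\ti N\to X$ descends to a continuous map $\bar p:N\to X$ with $h\circ\bar p=\mathrm{Id}_N$ and (since $p$ is surjective) $\bar p$ surjective, hence $\bar p$ is a continuous bijection of compact Hausdorff spaces and therefore a homeomorphism, proving part (c).
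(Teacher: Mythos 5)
There is a genuine gap, and it sits exactly at the point you flagged: the ``decoupling'' step. The diameter bound \eqref{too1} only applies to the adapted polydiscs $V_i(\rho)$ that are small in \emph{every} direction, and for those the chain argument cannot close: to chain across a positive-dimensional fiber $Z=\pi^{-1}(y')$ you need on the order of $\rho^{-1}$ such polydiscs (more if you cover all of $Z$), while each link only contributes $C\rho(-\log\rho)^d$, so the total estimate is of size $(-\log\rho)^d\to\infty$. Your proposed repair --- fix the centers and the nerve, then shrink the radius --- is not coherent: once the radius drops below half the chart-distance between neighboring centers the overlaps are empty, so the ``same overlap pattern'' cannot be preserved. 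The only way to keep a fixed nerve is to keep the neighborhoods of fixed size in the directions along $E$ (thin only transversally), but for such slabs the bound \eqref{too1} no longer applies, and the assertion that their $p$-images have small $d_X$-diameter is essentially the proposition you are trying to prove. Indeed, no argument using only the upper bound \eqref{to0} can work: the Euclidean metric on a chart satisfies \eqref{estimatio} trivially, yet distinct points of $\{w_1=\dots=w_k=0\}$ have distinct limits in its completion. The proof must use that $\pi^*\omega$ is a pullback from $N$, i.e.\ that it degenerates along the fibers of $\pi$, and your proposal never invokes this.

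This is precisely what the paper's proof supplies. It takes a path $\gamma$ inside the fiber, stratifies $E$ and reduces to a curve lying in one open stratum, translates $\gamma$ slightly off $E$, and shows the $\pi^*\omega$-lengths of the translated paths tend to zero. For this it needs two inputs beyond \eqref{estt}: the ``Furthermore'' clause of Theorem \ref{prop1}, namely that the tangential coefficients $g_{i\bar j}$ with $i,j>k$ extend continuously to $\ti{U}$ (so the translated lengths converge), and the fact that locally $q^*\pi^*\omega=\ddbar\vp$ with $\vp$ pulled back from $N$, hence constant along the fiber, which via a weak-convergence argument forces the continuous limit of the tangential metric to be zero on the fiber. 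Your use of Proposition \ref{bs} and of the connectedness of $\pi$-fibers is fine as far as it goes, but without these two ingredients the key claim that a bounded number of neighborhoods with arbitrarily small $p$-image diameter can be chained from $y$ to $\hat{y}$ is not established, so the argument does not prove Proposition \ref{key}.
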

Indeed, granted this, $p$ would then factor through $\pi$, i.e. we would find a continuous surjective map $\bar{p}:N\to X$ such that $p=\bar{p}\circ\pi$ ($\bar{p}$ is continuous because $\pi$ is a topological quotient map), and so $\pi=h\circ \bar{p}\circ\pi$. This clearly implies that $h\circ\bar{p}=\mathrm{Id}$ since $\pi$ is surjective, and so we conclude that $\bar{p}$ is a homeomorphism. It remains therefore to prove Proposition \ref{key}.
\begin{proof}
Recall that the map $\pi$ is a composition of blowups with smooth centers, so in particular its fibers are connected. Note that all the fibers of $\pi$ are subvarieties of $\ti{N}$, so their singular locus is itself stratified by locally closed smooth subvarieties of decreasing dimensions. We can then find a piecewise smooth path $\gamma$ in $\ti{N}$ joining $y$ and $\hat{y}$ such that $\pi(\gamma)=\pi(y)$ (the path may fail to be smooth only when crossing between the strata of the singularities of the fiber). In particular, $\gamma$ is contained in the simple normal crossings divisor $E=\ti{N}\backslash \pi^{-1}(N_0).$
Stratify $E$ by $E=E^1\supset E^2\supset\dots \supset E^n\supset E^{n+1}=\emptyset$ where each $E^j$ is the union of all $j$-tuple intersections of components of $E$, so that $E^j$ is smooth of codimension $j$ in $\ti{N}$ (usually disconnected). We can then find points $y_0=y, y_1,\dots, y_m=\hat{y}$ ($m\geq 1$) which all lie successively on the curve $\gamma$ such that for all $i=0,\dots,m-1$ the curve $\gamma$ between the points $y_i$ and $y_{i+1}$ (minus possibly the endpoints) is smooth and with image lying all in a unique open stratum $E^{k(i)}\backslash E^{k(i)+1}$. It is then enough to show that for all $i=0,\dots,m-1$ we have $p(y_i)=p(y_{i+1})$, which clearly implies what we want.

By renaming, we can just assume that we have two points $y,\hat{y}\in E$, with the curve $\gamma\subset E$ joining them which satisfies $\pi(\gamma)=\pi(y)=\pi(\hat{y})$, and except possibly for the endpoints, $\gamma$ is smooth with image contained in a unique open stratum $E^k\backslash E^{k+1}$ of codimension $k$ in $\ti{N}$, for some $1\leq k\leq n$.  Since $E^n$ is a finite set of points, the case $k=n$ is trivial (since by further subdivisions of the curve $\gamma$ we can always assume that the points $y,\hat{y}$ are close together), so we may assume that $k<n$, so $E^{k+1}$ is nonempty.
We then have three cases:\\

\noindent
{\bf Case 1. }Both $y,\hat{y}$ lie on $E^k\backslash E^{k+1}$.\\

We can assume that the points $y,\hat{y}$ are close together, because otherwise we can split the path $\gamma$ into smaller parts. This way, we can assume that the points $y,\hat{y}$ are close, both contained in the same chart $U$ as in Theorem \ref{prop1}. We can also assume that this chart is small enough so that after the basechange $q$ we can write $q^*\pi^*\omega=\ddbar\vp$ on $\ti{U}$ for some continuous psh function $\vp$ which is pulled back from $N$. The function $\vp$ is continuous thanks to Ko\l odziej's theorem \cite{Kol}, see the discussion in \cite[Section 4]{To1} for details.

So we are on a unit polydisc $U$ in $\mathbb{C}^n$ where $E$ is described by $t_1\cdots t_k=0$, and $E^k\cap U=\{t_1=\dots=t_k=0\}.$ After the basechange $q$, on $\ti{U}$ we have that
\begin{equation}\label{expr}
q^*\pi^*\omega=\sqrt{-1}\sum_{i,j=1}^n g_{i\ov{j}}dt_i\wedge d\ov{t}_j,
\end{equation}
where on $\ti{U}\backslash q^{-1}(E)$ the functions $g_{i\ov{j}}$ satisfy the properties in Theorem \ref{prop1}, in particular the functions
$g_{i\ov{j}}$
where all $i,j$ are $>k$, extend continuously to all the polydisc $\ti{U}$. Note that over $E^k$ the map $q$ is the identity, so we will abuse notation and denote the $q$-preimages of $y,\hat{y}$ and $\gamma$ by the same symbols.

Suppose that $\gamma$ is parametrized by the interval $[0,1]$, with $\gamma(0)=y,\gamma(1)=\hat{y}$. Define sequences of points $y'_i, \hat{y}'_i\in \ti{U}\backslash\{t_1\cdots t_k=0\}$ by translating $y,\hat{y}$ by $(1/i,\dots,1/i,0,\dots,0)$ (i.e. in the positive $t_1,\dots,t_k$ directions), so that $y'_i\to y, \hat{y}'_i\to \hat{y}$. Similarly, translate the path $\gamma$ in the same way to obtain a sequence of paths $\gamma_i$ contained in $\ti{U}\backslash\{t_1\cdots t_k=0\}$ which connect $y'_i$ and $\hat{y}'_i$, and with $\dot{\gamma}_i(s)=\dot{\gamma}(s)$ for all $0<s<1$ and all $i$. Since the path $\gamma$ is contained in $E^k$, its tangent vector lies in the span of $\frac{\de}{\de t_{k+1}},\dots,\frac{\de}{\de t_n}$ (and their conjugates), and so by Theorem \ref{prop1} the norm squared $|\dot{\gamma}_i(s)|^2_{q^*\pi^*\omega}$ (which only involves the coefficients $g_{i\ov{j}}$ with $i,j>k$ which extend continuously to $\ti{U}$) converges locally uniformly in $s$ to a limit, as $i\to\infty$.

We now claim that this limit is in fact zero. To prove this, recall that $q^*\pi^*\omega=\ddbar\vp$ where $\vp$ is pulled back from $N$, therefore it is constant along the path $\gamma$, since it is constant on the whole fiber $F$ of $\pi\circ q$ that contains $\gamma$. But the fiber $F$ is a complex subvariety of $\ti{U}$, and as we said earlier we may assume that the image $\gamma|_{(0,1)}$ is all inside an open (positive-dimensional) complex submanifold $F'\subset F\subset E^k$ (one of the elements of the stratification of $F$ by locally closed smooth subvarieties) along which $\vp$ is constant. Given any $0<s<1$, we may choose our coordinates as before so that near $\gamma(s)$ we can write $F'=\{t_1=\dots=t_\ell=0\}$ for some $k\leq\ell< n$. For each $i$ let $\iota_i:\Delta^{n-\ell}\to\ti{U}$ be the embedding of the unit polydisc in $\mathbb{C}^{n-\ell}$ given by $(t_{\ell+1},\dots,t_n)\mapsto (1/i,\dots,1/i,t_{\ell+1},\dots,t_n)$, whose image contains the image of $\gamma_i|_{(0,1)}$. Then the pullbacks $\iota_i^*q^*\pi^*\omega$ converge locally uniformly on $\Delta^{n-\ell}$ to a continuous limit $\omega_0$ as $i\to\infty$. At the same time, $\iota_i^*q^*\pi^*\omega=\ddbar (\vp\circ\iota_i),$ and the functions $\vp\circ\iota_i$ converge uniformly to the restriction of $\vp$ to $F'$, which is a constant. Therefore, $\iota_i^*q^*\pi^*\omega\to 0$ weakly as currents on $\Delta^{n-\ell}$, and hence $\omega_0=0$. It follows from this that $|\dot{\gamma}_i(s)|^2_{q^*\pi^*\omega}\to 0$, locally uniformly in $s$, proving our claim.

We can then take the image under $q$ to obtain sequences $y_i=q(y'_i),\hat{y}_i=q(\hat{y}'_i)$ of points in $U\backslash E$ which converge to $y,\hat{y}$, and are joined by the paths $q(\gamma_i)$ in $U\backslash E$ whose $\pi^*\omega$-length goes to zero. This proves that
$d_{\pi^*\omega}(y_i,\hat{y}_i)\to 0,$
and so $\pi(y_i)$ and $\pi(\hat{y}_i)$ converge to the same point in $X$, i.e. we have that $p(y)=p(\hat{y})$.\\

\noindent
{\bf Case 2. }The point $y$ lies on $E^k\backslash E^{k+1}$ and the point $\hat{y}$ lies on $E^{k+1}$.\\

Thanks to Case 1, we may assume that $y$ and $\hat{y}$ are very close together, so that they both lie in a chart $U$ where now $E\cap U=\{t_1\cdots t_{k+1}=0\}$, with $E^k\cap U=\{t_1=\dots=t_k=0\}, E^{k+1}\cap U=\{t_1=\dots=t_{k+1}=0\}$. Write $t,\hat{t}$ for the $(t_{k+2},\dots,t_n)$ coordinates of $y,\hat{y}$ respectively, so that in our coordinates we have
$y=(0,\dots,0,1,t), \hat{y}=(0,\dots,0,0,\hat{t})$ (up to scaling the $t_k$ coordinate).
For every $i\geq 2$, look at the point $\gamma(1-1/i)\in E^k\backslash E^{k+1}$.

Thanks to Case 1, there is $0<\ve_2\ll 1$ such that there is a path in $U\backslash E$ joining $(\ve_2,\dots,\ve_2,1,t)$ and $\gamma(1-1/2)+(\ve_2,\dots,\ve_2,0,\dots,0)$ ($k$ copies of $\ve_2$) of $\pi^*\omega$-length at most $1/2$. Then, choose $0<\ve_3<\ve_2$ such that there is a path in $U\backslash E$ joining $(\ve_3,\dots,\ve_3,1,t)$ and $\gamma(1-1/3)+(\ve_3,\dots,\ve_3,0,\dots,0))$ of $\pi^*\omega$-length at most $1/3$. Continue this way to obtain a sequence $\ve_i\to 0$ and paths $\gamma_i$ in $U\backslash E$ joining $y_i:=(\ve_i,\dots,\ve_i,1,t)$ and $\hat{y}_i:=\gamma(1-1/i)+(\ve_i,\dots,\ve_i,0,\dots,0)$ of $\pi^*\omega$-length at most $1/i$. This proves that $d_{\pi^*\omega}(y_i,\hat{y}_i)\to 0,$ and therefore $p(y)=p(\hat{y})$.\\

\noindent
{\bf Case 3. }Both $y,\hat{y}$ lie on $E^{k+1}$.\\

Recall that by construction we have that the path $\gamma$ (except the endpoints) is all contained in $E^k\backslash E^{k+1}$. Then we can find points $y'=\gamma(\ve),\hat{y}'=\gamma(1-\ve)\in E^{k}\backslash E^{k+1}$ which lie on $\gamma$, close to the corresponding $y,\hat{y}$ ($0<\ve\ll 1$).
Applying Case 2 we get that $p(y')=p(y)$ and $p(\hat{y}')=p(\hat{y})$, while Case 1 gives $p(y')=p(\hat{y}')$, and this completes the proof.
\end{proof}

\begin{remark}
In fact the smoothness of the base $N$ was not used substantially in our arguments, and Theorem \ref{theorem1} immediately extends to the case when $N$ is a possible singular projective variety. Indeed, in general the variety $N$ is the image of a map $f:M\to\mathbb{CP}^k$ for some $k$, and the Schwarz Lemma argument at the beginning of this section applies with $\omega_N$ replaced with the Fubini-Study metric on $\mathbb{CP}^k$, and so as before we obtain a Lipschitz map $h:(X,d_X)\to (\mathbb{CP}^k,\omega_{\rm FS})$ with image still equal to $N$, and this gives $h:X\to N$ with $h\circ\psi =\mathrm{Id}$ on $N_0$ (where now the complement $S'$ of $N_0$ in $N$ consists of the singular points of $N$ together with the critical values of $f$ in the smooth part of $N$). As before we have a map $\pi:\ti{N}\to N$, composition of blowups with smooth centers, such that $\ti{N}$ is smooth and $\pi^{-1}(S')=E$ is a divisor with simple normal crossings, and all the other arguments go through verbatim. However, since no example of such fibrations with $M$ hyperk\"ahler and $N$ singular is known (and it is conjectured that none exists, cf. \cite{Hw}), we have assumed for simplicity throughout the paper that $N$ is smooth.
\end{remark}
\section{Special K\"{a}hler geometry}\label{sectspec}
The goal of this section is to prove Theorem  \ref{prop1}, and therefore also Theorem \ref{theorem1} thanks to the results in sections \ref{sectmain} and \ref{secthk}. The proof  depends heavily on the  geometry of special K\"{a}hler metrics.
The notion of a special K\"{a}hler metric was introduced by physicists   (cf. \cite{Fr,Da,Hi,St}), and an intrinsic definition was given in  \cite{Fr}. Special K\"{a}hler metrics  exist on the base of algebraic completely integrable systems, and conversely such metrics, at least locally, induce algebraic completely integrable systems (provided that they are integral, in a suitable sense). We review some background of special   K\"{a}hler geometry, following
 \cite{Fr} closely, and then  we  prove Theorem  \ref{prop1}.

\subsection{Special K\"{a}hler metrics}\label{4.1} Let $(N_{0}, \omega)$ be a (possibly noncompact) K\"{a}hler manifold of dimension $n$. A special K\"{a}hler structure is a real torsion-free flat connection $\nabla$ on $TN_0$ such that $$\nabla\omega=0,\quad d^{\nabla}I=0,$$ where $I$ is the complex structure of $N_{0}$.

For a special K\"{a}hler manifold  $(N_{0}, \omega)$,  it is shown in  \cite{Fr}  that $N_{0}$ admits local flat Darboux coordinates, i.e. for any point $y\in N_{0}$, there are real  coordinates $y_{1}, \cdots, y_{2n}$ on a neighborhood of $y$ such that $\nabla dy_{i}=0$, $i=1, \cdots, 2n$, and  \begin{equation}\label{eq6.1}\omega=\sum\limits_{i=1}^{n}dy_{i}\wedge dy_{i+n}. \end{equation} The transition functions between two such coordinates are of the form $y_{j}'=\sum\limits_{i=1}^{2n}A_{ji}y_{i}+b_{j}$, $b_{j}\in \mathbb{R}$, and $A=(A_{ji})\in Sp(2n, \mathbb{R})$.  Hence the local flat Darboux coordinates covering gives a real affine manifold structure on $N_{0}$. If we have $A\in Sp(2n, \mathbb{Z})$ then we call it an integral special K\"ahler manifold.

If $y_{1}, \cdots, y_{2n}$ are local flat Darboux coordinates, there are two holomorphic coordinates systems $\{w_{1}, \cdots, w_{n}\}$ and $\{w_{1}^{*}, \cdots, w_{n}^{*}\}$  satisfying that \begin{equation}\label{eq6.2} dy_{i}= {\rm Re}dw_{i}, \  \  \  \  dy_{i+n}= -{\rm Re}dw_{i}^{*},  \  \  \  i=1, \cdots, n.\end{equation}   We  call $\{w_{i}\}$ the special coordinates system  and $\{w_{i}^{*}\}$ the conjugate coordinates system.  We define a complex matrix $Z=[Z_{ij}]$ by \begin{equation}\label{eq6.3} Z_{ij}= \frac{\partial w_{j}^{*}}{\partial w_{i}}.\end{equation} The K\"{a}hler form $\omega$ being a $(1,1)$-form implies that $Z_{ij}=Z_{ji}$, and there is a holomorphic function $\mathfrak{F}$, called a holomorphic prepotential function, such that  $ w_{i}^{*}=\frac{\partial \mathfrak{F}}{\partial w_{i}}$,  $  Z_{ij}= \frac{\partial^{2} \mathfrak{F}}{\partial w_{i}\partial w_{j} }.$
The K\"{a}hler potential is given by
$$\phi=\frac{1}{2}{\rm Im}\left(\sum\limits_{i=1}^{n}w_{i}^{*}\bar{w}_{i}\right),$$ and the K\"{a}hler metric is \begin{equation}\label{eq6.4}\omega=\sqrt{-1}\partial \overline{\partial} \phi=\frac{\sqrt{-1}}{2}\sum_{ij}{\rm Im} Z_{ij}dw_{i}\wedge d\bar{w}_{j}.  \end{equation} Thus $Z$ satisfies  the Riemann relations
\begin{equation}\label{eq6.5}Z^{T}=Z,  \  \  \  {\rm Im}Z >0, \end{equation} i.e. $Z$ belongs to the Siegel upper half space $\mathfrak{H}_n$.

If $g$ denotes the corresponding Riemannian metric of $(\omega, I)$, then $g$ is an affine K\"{a}hler metric with respect to the  local flat Darboux coordinates $y_{1}, \cdots, y_{2n}$  (cf. \cite{Fr,Hi}), i.e.   \begin{equation}\label{eq6.51}g= \sum_{ij}\frac{\partial^{2} \phi}{\partial y_{i} \partial y_{j}}dy_{i} dy_{j}.    \end{equation}   Furthermore, $g$ is a Monge-Amp\`ere metric, i.e. the potential function $\phi$ satisfies the real Monge-Amp\`ere equation $$ \det \left(\frac{\partial^{2} \phi}{\partial y_{i} \partial y_{j}}\right) \equiv {\rm const}, $$ since  $$\sqrt{\det (g_{ij})}dy_{1}\wedge \cdots\wedge dy_{2n}=\frac{1}{n!}\omega^{n}.$$

In \cite{Lu}, it is proved that if $(N_{0}, \omega)$ is complete, then $\omega$ is a flat metric (see also \cite{Co}),  which can also be obtained by using Cheng-Yau's theorem for Monge-Amp\`ere metrics in the case when $N_{0}$ is  compact  \cite[Corollary 2.3]{CY}.   Therefore, many interesting examples of special K\"{a}hler manifolds are not complete, and it is a natural question to study their completions.

\subsection{Algebraic completely integrable systems}\label{4.2}
An algebraic completely  integrable system is  a holomorphic Lagrangian fibration  $f: M_{0} \rightarrow N_{0}$ from a quasiprojective manifold   $M_{0}$ with $\dim_{\mathbb{C}}M_0=2n$, to a K\"{a}hler manifold  $ N_{0}$ with $\dim_{\mathbb{C}}N_0=n$, i.e.  $M_{0}$ admits  a holomorphic symplectic form $\Omega$ and the class $[\alpha]$ of an ample line bundle, $f:M_0\to N_0$ is a proper holomorphic submersion with connected fibers, and every fiber $M_{y}=f^{-1}(y)$, $y\in N_{0}$, is a complex Lagrangian submanifold with respect to $\Omega$. This forces every fiber $M_{y}$ to be an Abelian variety (without a specified origin) with the polarization $[\alpha_{y}]=[\alpha|_{M_{y}}]$, and the polarization is of type $(d_{1}, \cdots , d_{n})$, for some $d_{i}\in\mathbb{N}$.

It is shown in \cite[Section 3]{Fr} or \cite[Theorem 2]{Hi} that an integral special K\"{a}hler structure exists  on the base of  an algebraic completely   integrable system, and more   generally on the moduli space of holomorphic Lagrangian submanifolds in hyperk\"{a}hler manifolds (see also \cite{mar}). We recall the construction.

The vector bundle $E=f_*(T_{M_0/N_0}^{(1,0)})$ is isomorphic to $T^{*(1,0)}N_0$ via the pairing induced by $\Omega$, and the fiberwise action of $E$ on $M_0$ by exponentiation has as kernel a lattice subbundle $\check{\Lambda}\subset T^{*(1,0)}N_0$, with fiber $\check{\Lambda}_{y}\cong H_{1}(M_{y}, \mathbb{Z})$  for any $y\in N_{0}$, i.e. $\check{\Lambda}=\hom_{\mathbb{Z}}(R^{1}f_{*}\mathbb{Z},\mathbb{Z})$. The quotient $T^{*(1,0)}N_{0}/\check{\Lambda}$ is called the Jacobian family of $f$ (see e.g. \cite[Section 2.1]{Cla}, \cite[Section 3]{Fr}, \cite[Proposition 2.1]{Mar}), and is also a holomorphic Lagrangian fibration from a hyperk\"ahler quasiprojective manifold (with a polarization which is induced by $[\alpha]$ on $M_0$) with fibers isomorphic to those of $f$ (as polarized Abelian varieties) for all $y\in N_0$. The Jacobian fibration comes with a holomorphic Lagrangian section, and over every coordinate ball $B\subset N_0$ the original family $f$ also admits a holomorphic Lagrangian section, and therefore is isomorphic to the Jacobian family over any such $B$ (cf. \cite[p.43]{Fr}, \cite[Proposition 3.5]{Hw}, \cite[Proposition 2.4]{HO}), but there may be no such isomorphism globally over $N_0$ precisely because $f$ need not have a holomorphic Lagrangian section on all of $N_0$.

The special K\"ahler metric induced by $f$ is then defined purely using the Jacobian family as follows. There is a  canonical holomorphic symplectic form $\Omega_{can}$ on the  holomorphic cotangent bundle $T^{*(1,0)}N_{0}$, which  is characterized by $d\zeta=-\zeta^{*}\Omega_{can}$   for any 1-form $\zeta$.
Under  a local trivialization $ T^{*(1,0)}N_{0}|_{U}\cong U\times \mathbb{C}^{n}$ by $\sum\limits_{i} z_{i}dw_{i} \mapsto (w_{1}, \cdots, w_{n}, z_{1}, \cdots, z_{n})$, over some open subset $U\subset N_0$, we have  \begin{equation}\label{eq6.006}\Omega_{can}=\sum\limits_{i=1}^{n} d w_{i}\wedge dz_{i}.\end{equation}
Any local section of  $\check{\Lambda}$ is holomorphic  Lagrangian with respect to   $\Omega_{can}$.

Since $T^{*(1,0)}N_{0}$ is canonically isometric to the tangent bundle $T^{*}N_{0}$ as real smooth vector bundles, we have that  $\check{\Lambda}\otimes_{\mathbb{Z}}\mathbb{R}\cong T^{*}N_{0}$, and the embedding $\check{\Lambda}\hookrightarrow T^{*(1,0)}N_{0}$ is given by $\check{v}\mapsto \check{v}^{(1,0)}= \check{v}-\sqrt{-1}I\check{v}$.  The dual lattice bundle of $\check{\Lambda}$ is $\Lambda=R^{1}f_{*}\mathbb{Z}\cong \hom_{\mathbb{Z}}(\check{\Lambda},\mathbb{Z})$, and is a lattice subbundle of the tangent bundle $TN_{0}$.  The torsion-free flat connection $\nabla$ in the special K\"{a}hler structure is the connection such that any local section of $\Lambda$ is parallel.

Let $\omega_{\rm SF}$ be the  $(1,1)$-form  on $T^{*(1,0)}N_{0}/\check{\Lambda}$ constructed in Theorem \ref{semiflat}, so that the restriction $\omega_{\mathrm{SF},y}=\omega_{\rm SF}|_{M_{y}}$ on $T^{*(1,0)}_{y}N_{0}/\check{\Lambda}_{y}\cong M_{y}$ is the flat K\"{a}hler metric representing $[\alpha_{y}]$. Therefore $\omega_{\rm SF}$ is  a  real bundle  symplectic form on $T^{*(1,0)}N_{0}$,  i.e. $\omega_{\mathrm{SF},y}$ is a linear symplectic form on $T^{*(1,0)}_{y}N_{0}$.  Since $\omega_{\rm SF}$ represents an integral cohomology class on the fibers, it defines an integral   symplectic form on $\check{\Lambda}$.
If $\check{v}_{1}, \cdots, \check{v}_{2n}$ are local sections  of $\check{\Lambda} $, which are  symplectic basis with respect to $\omega_{\rm SF}$, i.e.
\begin{equation}\label{eq4.2++}\omega_{\rm SF}(\check{v}_{i},\check{v}_{j})=0, \  \  \omega_{\rm SF}(\check{v}_{i+n},\check{v}_{j+n})=0 \  \  {\rm  and } \  \  \omega_{\rm SF}(\check{v}_{i},\check{v}_{j+n})=d_{i}\delta_{ij},\end{equation} $1\leq i,j\leq n$, then
local flat Darboux coordinates $y_{1}, \cdots, y_{2n}$ are defined such that $dy_{i}=d_{i}^{-1}\check{v}_{i}$ and $dy_{i+n}=-\check{v}_{i+n}$, $i=1, \cdots, n$. Here we regard  $\check{\Lambda}$ as a lattice subbundle of $T^{*}N_{0}$, and $\check{v}_{i}$, $i=1, \cdots,2n$,  as real 1-forms.

  We have  the special coordinates $w_{1}, \cdots, w_{n}$ and the conjugate  coordinates $w_{1}^{*}, \cdots, w_{n}^{*}$, and  $Z_{ij}= \frac{\partial w_{j}^{*}}{\partial w_{i}}$.    The image of $\check{\Lambda}_{y} \hookrightarrow T^{*(1,0)}_{y}N_{0} $ is generated by $dw_{i}$ and $dw_{i}^{*}$, $i=1, \cdots, n$, and the period matrix  of the Abelian variety $M_{y}$ is $[\Delta_{d},  Z_{ij}(y)]$, where $\Delta_{d}={\rm diag} (d_{1}, \cdots , d_{n})$.
By (\ref{eq6.1}) and (\ref{eq6.4}),
the special K\"{a}hler form is given  as   \begin{equation}\label{eq6.06}\omega =\sum\limits_{i=1}^{n}dy_{i}\wedge dy_{i+n}= \frac{\sqrt{-1}}{2}\sum_{ij}{\rm Im} Z_{ij}dw_{i}\wedge d\bar{w}_{j}.  \end{equation}

We end this subsection by showing  the following local calculation of the Ricci curvature of the special K\"ahler metric. A more general result is proved by the first-named author in \cite[Proposition 4.1]{To1} (see also \cite{ST}) in the case when $M\to N$ is a holomorphic fiber space with $M$ Calabi-Yau, and $N$ compact.

\begin{proposition}\label{pro0}
The Ricci form of the special K\"{a}hler metric $\omega$ is  the Weil-Petersson form $\omega_{WP}$  of the family of Abelian varieties $f: M_{0} \rightarrow N_{0}$, i.e. $${\rm Ric}(\omega)= \omega_{WP}.$$
\end{proposition}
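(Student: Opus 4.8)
The plan is to prove the identity by an explicit local computation in the special coordinates $w_1,\dots,w_n$, exploiting the fact that both $\Ric(\omega)$ and $\omega_{WP}$ are globally defined closed $(1,1)$-forms on $N_0$: it suffices to show that they agree on the domain $U$ of an arbitrary special coordinate chart, where all the relevant data has been made completely explicit in \S\ref{4.1}--\S\ref{4.2}.

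\textbf{The left-hand side.} By \eqref{eq6.4}--\eqref{eq6.06}, in the special coordinates $w_1,\dots,w_n$ (which are genuine holomorphic coordinates on $U\subset N_0$) the metric tensor of $\omega$ equals $g_{i\ov{j}}=\tfrac12\Im Z_{ij}$, where $Z=[Z_{ij}]\colon U\to\mathfrak H_n$ is the period map. Hence the standard formula for the Ricci form of a K\"ahler metric gives
\[
\Ric(\omega)=-\ddbar\log\det(g_{i\ov{j}})=-\ddbar\log\det(\Im Z),
\]
the multiplicative constant $2^{-n}$ being annihilated by $\ddbar\log$.

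\textbf{The right-hand side.} With the normalization of \cite{To1,ST}, the Weil-Petersson form of $f\colon M_0\to N_0$ is the curvature of the $L^2$ (Hodge) metric on the Hodge line bundle $f_*K_{M_0/N_0}$: if $\Omega_y$ is a local holomorphic frame over $U$, then $\omega_{WP}=-\ddbar\log\|\Omega_y\|_{L^2}^2$ with $\|\Omega_y\|_{L^2}^2=\int_{M_y}(\sqrt{-1})^{n^2}\,\Omega_y\wedge\ov{\Omega_y}$. Using the holomorphic trivialization $T^{*(1,0)}N_0|_U\cong U\times\CC^n$ of \S\ref{4.2}, with fibre coordinates $z_1,\dots,z_n$, the relative holomorphic volume form $\Omega_y=dz_1\wedge\cdots\wedge dz_n$ is such a frame: it is translation-invariant on each fibre, hence well defined on $M_y$, and in this trivialization it does not depend on $y$, so it is holomorphic over $U$. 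By \S\ref{4.2} the fibre $M_y$ is $\CC^n$ modulo the lattice generated by the columns of the period matrix $[\Delta_d,Z(y)]$, with $\Delta_d=\mathrm{diag}(d_1,\dots,d_n)$, whose Euclidean covolume is $(\det\Delta_d)(\det\Im Z(y))=(d_1\cdots d_n)\det\Im Z(y)$. Therefore $\|\Omega_y\|_{L^2}^2=2^{n}(d_1\cdots d_n)\det\Im Z(y)$, and since $2^{n}(d_1\cdots d_n)$ is a constant we get
\[
\omega_{WP}=-\ddbar\log\big(2^{n}(d_1\cdots d_n)\det\Im Z\big)=-\ddbar\log\det(\Im Z)=\Ric(\omega),
\]
which is the claim.

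\textbf{The main point.} The only step that requires care is the covolume computation together with the verification that $dz_1\wedge\cdots\wedge dz_n$ is a holomorphic frame of $f_*K_{M_0/N_0}$ over $U$. The former is the elementary fact that the lattice spanned by the columns of $[\Delta_d,Z]$ in $\CC^n\cong\RR^{2n}$ has covolume $\det\Delta_d\cdot\det\Im Z$, obtained by computing the determinant of the corresponding block-triangular real $2n\times2n$ matrix and using $\Im Z>0$; the latter holds because $\check\Lambda$ is a holomorphic lattice subbundle of $T^{*(1,0)}N_0$, so the fibre coordinates $z_i$ are fibrewise holomorphic and vary holomorphically with $y\in U$. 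We note that this proposition can also be deduced from the more general \cite[Proposition 4.1]{To1}, but since only the local formula is needed here the direct argument above is shorter and self-contained.
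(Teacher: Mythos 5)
Your proposal is correct and follows essentially the same route as the paper: both compute $\Ric(\omega)=-\ddbar\log\det(\Im Z)$ from the local expression $\omega=\frac{\sqrt{-1}}{2}\sum\Im Z_{ij}\,dw_i\wedge d\ov w_j$, and identify $\omega_{WP}$ as $-\ddbar\log$ of the fiberwise integral of $(\sqrt{-1})^{n^2}\,dz_1\wedge\cdots\wedge dz_n\wedge\ov{dz_1\wedge\cdots\wedge dz_n}$, which is a constant multiple of the Euclidean covolume $(d_1\cdots d_n)\det\Im Z$ of the lattice with period matrix $[\Delta_d,Z]$. Your extra care about the holomorphic frame of $f_*K_{M_0/N_0}$ and the block-triangular covolume computation only makes explicit what the paper's proof leaves implicit.
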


\begin{proof} Locally on $N_0$ we have $${\rm Ric}(\omega)=-\sqrt{-1}\partial\overline{\partial}\log\det ({\rm Im} Z_{ij}).  $$ For a sufficiently small open subset $U\subset N_{0}$, $ f^{-1}(U)\cong U\times \mathbb{C}^{n}/\check{\Lambda}_{U} $, where $\check{\Lambda}_{U} ={\rm Span}_{\mathbb{Z}}({\rm diag} (d_{1}, \cdots , d_{n})| Z_{ij}) $,  and $f$ is the projection  to $U$. We denote $z_{1}, \cdots, z_{n}$ the coordinates on $\mathbb{C}^{n}$, and $\Theta=d z_{1}\wedge \cdots \wedge d z_{n}$.   The Weil-Petersson form (see e.g. \cite{To1,ST}) is defined by $$\omega_{WP}=-\sqrt{-1}\partial\overline{\partial}\log \int_{M_{y}}(-1)^{\frac{n^{2}}{2}} \Theta \wedge \overline{\Theta}=-\sqrt{-1}\partial\overline{\partial}\log V_{y},  $$ where $V_{y}$ is the Euclidean volume of $M_{y}\cong \mathbb{C}^{n}/\check{\Lambda}_{U,y}$.
 We obtain the conclusion by $V_{y}=\det ({\rm Im} Z_{ij}) \prod\limits_{k}d_{k} $.
\end{proof}

\subsection{Estimates from degenerations of Hodge structures}
Now we are ready to  prove Theorem \ref{prop1}.

\begin{proof}[Proof of Theorem \ref{prop1}]
 Let $\omega$  be the  special K\"{a}hler metric  on $N_{0}$ induced by the   algebraic  completely   integrable system $(f: M_{0} \rightarrow N_{0}, [\alpha], \Omega)$, as explained in previous subsection. Recall that $f$ comes from a map $f:M\to N$ as in the Introduction, with $M$ projective hyperk\"ahler, that $N_0=N\backslash S'$, where $S'$ is the discriminant locus of $f$, and that $\pi:\ti{N}\to N$ is a modification with $E=\pi^{-1}(S')$ a simple normal crossings divisor.

 Let $\nabla$ be the flat connection of the special K\"{a}hler structure, and let  $\check{\Lambda}$ and $\Lambda$ be the lattice bundles as in last subsection, i.e. $\Lambda \cong R^{1}f_{*}\mathbb{Z} $ and $\check{\Lambda} \cong \hom_{\mathbb{Z}}(\Lambda,\mathbb{Z})$.
   We have the canonical identifications $\check{\Lambda}_{\mathbb{R}}=\check{\Lambda}\otimes_{\mathbb{Z}}\mathbb{R} \cong T^{*}N_{0}$,   $\Lambda_{\mathbb{R}}=\Lambda\otimes_{\mathbb{Z}}\mathbb{R} \cong R^{1}f_{*}\mathbb{R}\cong TN_{0} $ and $H^{1,0}(M_{y})\cong T_{y}^{(1,0)}N_{0} $ for any $y\in N_{0}$.
 There is a  weight-one integral polarized variation of Hodge structures on $N_0$ given by the quadruple $$(\mathcal{F}^{1}=T^{(1,0)}N_{0}\subset \mathcal{F}^{0}=TN_{0}\otimes_{\mathbb{R}}\mathbb{C}, \Lambda\subset TN_{0}, \nabla, \omega)$$ (in fact this data is equivalent to an integral special K\"ahler structure, cf. \cite[8.4]{Da}, \cite[Section 3.3]{Her} or \cite[Section 3]{BM}). Since $\pi:\ti{N}\backslash E\to N_0$ is a biholomorphism, we can also view this as a variation of Hodge structures on $\ti{N}\backslash E$. To alleviate notation, we will denote $\pi^*\omega$ simply by $\omega$.

 Let $U$ be a local chart   near a point of $E$, and $\{t'_{1}, \cdots, t'_{n}\}$ be coordinates on $U$ such that  $E\cap U$ is given by $\{t'_{1}\cdots t'_{k}=0\}$, and $\{t'_i=0\}=E_{j_i}\cap U$ for some $1\leq j_i\leq\mu,$ and all $1\leq i\leq k$. For a fixed $y\in U\backslash E$,  the lattice bundles  $\check{\Lambda}$ and $\Lambda$ induce  monodromy representations $\check{\rho}: \pi_{1}(U\backslash E) \rightarrow {\rm Aut}\check{\Lambda}_{y}$ and  $\rho: \pi_{1}(U\backslash E) \rightarrow {\rm Aut}\Lambda_{y}$  respectively.
   For $\gamma \in \pi_{1}(U\backslash E)$, we denote  $T_{\gamma}: \Lambda_{y}\rightarrow \Lambda_{y}$ and $\check{T}_{\gamma}: \check{\Lambda}_{y}\rightarrow \check{\Lambda}_{y}$  the corresponding  monodromy operators of $\rho$ and $\check{\rho}$.    If $\langle\cdot,\cdot\rangle$ denotes the  pairing between $\check{\Lambda}$ and $\Lambda$, we have $$\langle \check{T}_{\gamma} \alpha,\beta\rangle=\langle\alpha,T_{\gamma^{-1}}\beta\rangle$$ for any $\alpha \in  \check{\Lambda}_{y}$ and $\beta \in  \Lambda_{y}$ by $\langle \check{T}_{\gamma} \alpha,\beta\rangle=\langle \check{P}_{\gamma^{-1}(t)}\check{T}_{\gamma} \alpha,P_{\gamma^{-1}(t)}\beta\rangle$, $t\in [0,1]$,  where  $P_{\gamma^{-1}(t)}$ and  $\check{P}_{\gamma^{-1}(t)}$ are parallel transports  of $\Lambda$ and $\check{\Lambda}$ respectively.

 \begin{lemma}\label{le01}  There are positive integers $m_i\in\mathbb{N},$ $1\leq i\leq\mu$, such that the following holds. Let $q:
\tilde{U} \rightarrow U$  be the branched covering given by  $$
q(t_1,\dots,t_n)=(t_1^{m_{j_1}},\dots,t_k^{m_{j_k}},t_{k+1},\dots,t_n),
$$ where $t_{1}, \cdots, t_{n}$ are coordinates on $\tilde{U}$. We
 denote still by $\Lambda$ and $\check{\Lambda}$ the pullbacks of the respective lattice bundles via $q$. Then we have
 \begin{itemize}
   \item[i)] For any $\gamma \in \pi_{1}(\tilde{U}\backslash q^{-1}(E))$,   the monodromy operators of $ \Lambda$ and $ \check{\Lambda}$, still denoted by $T_{\gamma}$ and $\check{T}_{\gamma}$, satisfy $$(T_{\gamma}-I)^{2}=0,  \  \  \  \  (\check{T}_{\gamma}-I)^{2}=0.$$
     \item[ii)]  There are multivalued sections $\check{v}_{1}, \cdots , \check{v}_{2n}$ of $ \check{\Lambda}\subset T^{*} (\tilde{U}\backslash q^{-1}(E))$ such that  $$q^{*}\omega=-\sum_{i=1}^{n} d_{i}^{-1}  \check{v}_{i}\wedge \check{v}_{n+i}.$$
          \item[iii)]  For any $\gamma \in \pi_{1}(\tilde{U}\backslash q^{-1}(E))$,  $\check{T}_{\gamma}\check{v}_{j}=\check{v}_{j}$,    $j=1, \cdots, n$.  Hence  $\check{v}_{1}, \cdots , \check{v}_{n}$ are in fact single-valued sections of $\check{\Lambda}$  over $\tilde{U}\backslash q^{-1}(E)$.
    \end{itemize}
 \end{lemma}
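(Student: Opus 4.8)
The plan is to deduce all three items from the monodromy theorem for variations of Hodge structure, applied to the weight-one integral polarized variation over $\ti{N}\backslash E$ introduced above, combined with the description of the special K\"ahler data in terms of the lattice $\check{\Lambda}$ from subsection \ref{4.2}.

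I would start by fixing the $m_i$. Since $E=\bigcup_j E_j$ has simple normal crossings, Borel's theorem on quasi-unipotence of local monodromy applies around each irreducible component $E_j$, and since $E_j$ is irreducible the local monodromy around it is a well-defined conjugacy class; let $m_j\in\mathbb{N}_{>0}$ be the order of its semisimple part, which is then constant along $E_j$. After the branched covering $q$ of \eqref{mapq}, ramified to order $m_{j_i}$ along each branch $\{t'_i=0\}$, the monodromy of the pulled-back variation around each $\{t_i=0\}\subset\ti{U}$ becomes unipotent; write it $\check{T}_i$ on $\check{\Lambda}$ and $T_i$ on $\Lambda$, and set $\check{N}_i=\log\check{T}_i$, $N_i=\log T_i$, which commute for different $i$. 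Restricting the variation to a generic one-parameter disc transverse to $\{t_i=0\}$ and invoking the monodromy theorem for a weight-one variation gives $N_i^2=0$ (and $\check{N}_i^2=0$); restricting instead to a disc of the form $\{t_i=t_j=s\}$ gives $(N_i+N_j)^2=0$, and since the $N_i$ commute this forces $N_iN_j=0$ for all $i,j$, and likewise $\check{N}_i\check{N}_j=0$. Finally $\pi_1(\ti{U}\backslash q^{-1}(E))\cong\mathbb{Z}^k$ is generated by the loops $\gamma_i$ around $\{t_i=0\}$, so for $\gamma=\sum_i n_i\gamma_i$ one has $\check{T}_\gamma-I=\sum_i n_i\check{N}_i$ (the cross terms vanish), whence $(\check{T}_\gamma-I)^2=\sum_{i,j}n_in_j\check{N}_i\check{N}_j=0$, and the same for $T_\gamma$; this gives i).

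For ii) I would run the construction of subsection \ref{4.2} over the non-simply-connected base $\ti{U}\backslash q^{-1}(E)$: choose a base point $y$, pick a frame of $\check{\Lambda}_y$ that is symplectic for $\omega_{\rm SF}$ in the sense of \eqref{eq4.2++}, and extend it by the flat connection $\nabla$ to multivalued flat sections $\check{v}_1,\dots,\check{v}_{2n}$ of $\check{\Lambda}$; since these are locally the Darboux frame of subsection \ref{4.2}, the identity $q^*\omega=-\sum_i d_i^{-1}\check{v}_i\wedge\check{v}_{n+i}$ holds throughout, the only multivaluedness being the action of the $\check{T}_\gamma$. The point of iii) is then to make this base-point frame so that $\check{v}_1,\dots,\check{v}_n$ span a monodromy-invariant subspace, and here is the construction I would use. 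Let $Q$ be the polarization form on $\check{\Lambda}$ (of type $(d_1,\dots,d_n)$, induced by $\omega_{\rm SF}$); the $\check{T}_\gamma$ preserve $Q$, so each $\check{N}_i$ is skew-adjoint for $Q$. Together with $\check{N}_i\check{N}_j=0$ this shows that $\check{V}:=\sum_i\operatorname{im}\check{N}_i$ is $Q$-isotropic, hence $\dim\check{V}\leq n$; moreover the identity $\check{T}_\gamma-I=\sum_i n_i\check{N}_i$ gives $\bigcap_\gamma\ker(\check{T}_\gamma-I)=\bigcap_i\ker\check{N}_i=\check{V}^{\perp_Q}$, which contains $\check{V}$ and has dimension $\geq n$. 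Since $Q$ induces a nondegenerate symplectic form on $\check{V}^{\perp_Q}/\check{V}$, I can pick a Lagrangian $L$ with $\check{V}\subseteq L\subseteq\check{V}^{\perp_Q}$; it is a monodromy-invariant Lagrangian subspace of $(\check{\Lambda}_y\otimes\mathbb{Q},Q)$. By the elementary theory of integral symplectic lattices, $L\cap\check{\Lambda}_y$ is a primitive sublattice of rank $n$ that extends to a symplectic basis of $\check{\Lambda}_y$ of type $(d_1,\dots,d_n)$; declaring the base-point values of $\check{v}_1,\dots,\check{v}_n$ to be such a basis of $L\cap\check{\Lambda}_y$ and completing, ii) still holds and, since $L$ is fixed by every $\check{T}_\gamma$, we get $\check{T}_\gamma\check{v}_j=\check{v}_j$ for $j\leq n$, so $\check{v}_1,\dots,\check{v}_n$ descend to single-valued sections over $\ti{U}\backslash q^{-1}(E)$; this is iii).

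The step I expect to be the main obstacle is iii), where one must simultaneously pin down the correct integral normalization of the whole frame $\check{v}_1,\dots,\check{v}_{2n}$ — so that ii) holds with the genuine polarization type $(d_1,\dots,d_n)$ — and the monodromy-invariance of its first half. The Hodge-theoretic input that makes this possible is the weight-one several-variable statement ``$\sum_i\operatorname{im}\check{N}_i$ is $Q$-isotropic'', which I would obtain from the one-variable monodromy theorem by restriction to coordinate discs as above rather than invoking the full Cattani--Kaplan--Schmid machinery; the remaining point — that a monodromy-invariant isotropic subspace containing the span of the vanishing cycles can be completed to an integral symplectic basis of the prescribed type — is linear algebra over $\mathbb{Z}$, but it must be carried out with some care since the polarizations $[\alpha_y]$ need not be principal.
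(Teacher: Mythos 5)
Your overall route is the same as the paper's: define the $m_i$ by quasi-unipotence of the local monodromy around each $E_i$, pass to the branched cover to make the $T_i$ unipotent with $(T_i-I)^2=0$ by the weight-one monodromy theorem, and then build a flat frame of $\check{\Lambda}$ adapted to a Lagrangian $L$ squeezed between $\check{V}=\sum_i\operatorname{im}\check{N}_i$ and its $Q$-orthogonal $\bigcap_i\ker\check{N}_i$, so that the first half of the frame is monodromy-invariant and ii) follows from the Darboux description of the special K\"ahler form. Two of your steps are in fact cleaner than the paper's: you get $\check{N}_i\check{N}_j=0$ (hence $(T_\gamma-I)^2=0$ for \emph{every} $\gamma\in\mathbb{Z}^k$, including mixed-sign words, a point the paper glosses) by restricting to transverse and diagonal discs $\{t_i=t_j=s\}$ and using commutativity, instead of invoking the monodromy weight filtration and \cite[Lemma 2.2]{CCK}; and you verify directly that $\check{V}$ is $Q$-isotropic with $\check{V}^{\perp_Q}=\bigcap_i\ker\check{N}_i$, which is exactly the paper's $W_0\subset W_1=W_0^{\perp}$. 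Parts i) and ii) are fine as you argue them.

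The gap is in the step you yourself flagged, and your proposed justification does not close it. It is \emph{not} true, as a statement of ``elementary theory of integral symplectic lattices,'' that a primitive rank-$n$ isotropic sublattice of a symplectic lattice of type $(d_1,\dots,d_n)$ extends to a symplectic basis of that type when the polarization is non-principal. For example, in the type-$(1,4)$ lattice with basis $e_1,e_2,f_1,f_2$, $Q(e_1,f_1)=1$, $Q(e_2,f_2)=4$ and all other pairings zero, the sublattice $L_0$ spanned by $u_1=2e_1+e_2$ and $u_2=f_2-2f_1$ is primitive and isotropic ($Q(u_1,u_2)=-4+4=0$), yet every lattice vector pairs evenly with $L_0$, so its induced elementary divisors are $(2,2)$ and no vector of $L_0$ can satisfy $Q(\check{v}_1,\check{v}_{n+1})=d_1=1$; such an $L_0$ cannot be the invariant half of a type-$(1,4)$ symplectic basis. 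So if $\check{V}$ happened to be (rationally) such a sublattice and were already Lagrangian (forcing $L=\check{V}$), your construction would have no valid integral completion; the existence of a suitable $L$ with induced type exactly $(d_1,\dots,d_n)$ therefore requires either an argument about which $\check{V}$ can actually occur for these degenerations, or a choice among the Lagrangians between $\check{V}$ and $\check{V}^{\perp_Q}$ that you have not supplied, or else a retreat to a $\mathbb{Q}$-symplectic frame of type $(d_1,\dots,d_n)$ adapted to $L$ (which always exists and suffices for the period estimates that follow, but then the $\check{v}_j$ are only rational combinations of lattice sections, not sections of $\check{\Lambda}$ as the lemma asserts). To be fair, the paper's own proof is equally terse at precisely this point --- it simply asserts the existence of the adapted symplectic basis $\{v_1,\dots,v_{2n}\}$ with $L=\mathbb{R}\{v_{n+1},\dots,v_{2n}\}$ and then treats the $v_i$ as sections of $\Lambda$ --- so you have correctly located the delicate step, but the lattice-theoretic fact you invoke to dispose of it is false in general and the step remains open in your write-up.
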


 \begin{proof} By shrinking $U$ if necessary, we may assume that  $\pi_{1}(U\backslash E)\cong \mathbb{Z}^{k}$, generated by loops which wrap once around each component of $E$ that intersects $U$.   The Monodromy theorem (cf. Chapter II of  \cite{Gr}) shows that for every $\gamma\in \pi_{1}(U\backslash E)$ the eigenvalues of $T_{\gamma}$ are roots of unity, and if we take $\gamma$ to be a loop which wraps once around the irreducible  component $E_i$ of $E$  then we get one eigenvalue which is an $m_i^{th}$ root of unity (and the others equal $1$), with $m_i$ being independent of which particular loop we choose (once we fix its orientation). This way we obtain the positive integers $m_i$, $1\leq i\leq \mu$, and we can locally define the branched covering $q:\tilde{U} \rightarrow U$ as in \eqref{mapq},  such that for any $\gamma \in \pi_{1}(\tilde{U}\backslash q^{-1}(E))$,   $(T_{\gamma}-I)^{2}=0$.   By  duality, $(\check{T}_{\gamma}-I)^{2}=0$, and thus both $T_{\gamma}$ and $\check{T}_{\gamma}$ are unipotent.

 If we  let $\mathcal{N}_{\gamma}=\log T_{\gamma}=T_{\gamma}-I$, then $\mathcal{N}_{\gamma}$  is an element in  the Lie algebra $$\mathfrak{g}_{0}=\{\mathcal{N}\in \hom_{\mathbb{R}} (\Lambda_{\mathbb{R},y},\Lambda_{\mathbb{R},y})| \omega (\mathcal{N} \cdot, \cdot)+ \omega (\cdot, \mathcal{N} \cdot)=0\}$$ (cf.  Chapter V of  \cite{Gr}). Let $\gamma_{i}$, $i=1, \cdots, k$, be the generators of  $\pi_{1}(\tilde{U}\backslash q^{-1}(E))$, and
denote   $T_{i}=T_{\gamma_{i}}$,
$\mathcal{N}_{i}=\log T_{i}$, $i=1, \cdots, k$.
The monomdromy cone is the open cone  $$\Sigma=\left\{\sum_{i} \nu_{i}\mathcal{N}_{i}|  \nu_{i}>0 \right\}\subset \mathfrak{g}_{0}. $$ Note that for any $\mathcal{N}\in \Sigma$, we have $\mathcal{N}^{2}=0$.   If $\mathcal{N}=\sum\limits_{i} \nu_{i}\mathcal{N}_{i}$, then $\mathcal{N}=\log \prod\limits_{i} T_{i}^{\nu_{i}}$. Since  $T=\prod\limits_{i} T_{i}^{\nu_{i}}$ is the monodromy operator of  the curve $\prod\limits_{i} \gamma_{i}^{\nu_{i}}$, we have $\mathcal{N}^{2}=(T-I)^{2}=0$.

For any $\mathcal{N}\in \Sigma$, we have  $W_{0}={\rm Im} \mathcal{N}=\oplus_{i=1}^{k} {\rm Im} \mathcal{N}_{i}$ and $W_{1}=\ker \mathcal{N}=\bigcap \limits_{i=1}^{k} \ker \mathcal{N}_{i}$ by \cite[Lemma 2.2]{CCK}, which give the
 monodromy weight filtration  $$ \{0\}\subset W_{0} \subset W_{1} \subset W_{2}=\Lambda_{\mathbb{R},y}. $$   Then $W_{0}$ is an isotropic subspace and $$W_{1}=W_{0}^{\bot}=\{u\in \Lambda_{\mathbb{R},y}| \omega(u,v)=0, \forall v \in W_{0}\},$$ by $\omega(\mathcal{N} \cdot,\cdot)+\omega(\cdot,\mathcal{N}\cdot)=0 $.   Thus we have a Lagrangian subspace $W_{0}\subset L \subset W_{1}$, and
  a symplectic basis $\{ v_{1}, \cdots, v_{2n}\}$ of $\omega$ such that  $\omega(v_{i},v_{j})=0$, $\omega(v_{i+n},v_{j+n})=0$ and  $\omega(v_{i},v_{j+n})=-d_{i}^{-1}\delta_{ij}$, $1\leq i,j\leq n$, and $L=\mathbb{R}\{v_{n+1}, \cdots, v_{2n}\}$.    By varying $y$ in $\tilde{U}\backslash q^{-1}(E)$, we regard   $v_{n+1}, \cdots, v_{2n}$ as (single-valued) sections of $\Lambda$ on $\tilde{U}\backslash q^{-1}(E)$ (since they lie in $W_1$ and so are monodromy invariant), and  $v_{1}, \cdots, v_{n}$ as multi-valued sections.
  If $\check{v}_{1}, \cdots, \check{v}_{2n}$ are dual multisections of $\check{\Lambda} $, we have
   $q^{*}\omega=-\sum\limits_{i} d_{i}^{-1}  \check{v}_{i}\wedge \check{v}_{n+i}$.

   Note that $T_{\gamma}v_{i+n}=v_{i+n}$, and $T_{\gamma}v_{i}- v_{i} \in W_{0}\subset L$, $i=1, \cdots, n$, for any $\gamma\in \pi_{1}(\tilde{U}\backslash q^{-1}(E))$. We have $\langle \check{T}_{\gamma}\check{v}_{i}, v_{j+n} \rangle= \langle \check{v}_{i}, v_{j+n} \rangle =0$, and $\langle \check{T}_{\gamma}\check{v}_{i}, v_{j} \rangle= \langle \check{v}_{i}, v_{j}+ (T_{\gamma^{-1}}v_{j}- v_{j}) \rangle =\delta_{ij}$, $1\leq i,j \leq n$.  Thus $\check{T}_{\gamma}\check{v}_{i}=\check{v}_{i}$ for any $\gamma$, and
    so $\check{v}_{1}, \cdots , \check{v}_{n}$ are single-valued sections of $\check{\Lambda}\subset T^{*} (\tilde{U}\backslash q^{-1}(E))$.
 \end{proof}

Note that $\Lambda_{\mathbb{R}}\cong T\tilde{U}\backslash q^{-1}(E)$, $\check{\Lambda}_{\mathbb{R}}\cong T^{*}\tilde{U}\backslash q^{-1}(E)$ and $\check{\Lambda}$ is identified as a lattice subbundle in $T^{*(1,0)}\tilde{U}\backslash q^{-1}(E)$ via $\check{v}\mapsto \check{v}^{(1,0)}= (1-\sqrt{-1}I)\check{v}$,  where $I$ is the complex structure of $\tilde{U}$.  The multisections $\check{v}_{j}^{(1,0)}$, $j=1, \cdots, 2n$, are therefore holomorphic (and those with $j=1,\cdots n$ are single-valued).

For $y\in U\backslash E$ the Abelian varieties $M_{y}\cong T^{*(1,0)}_{y}\tilde{U}/ \check{\Lambda}_{y}$, with $\check{\Lambda}_{y}\cong {\rm Span}_{\mathbb{Z}}(\check{v}_{1}^{(1,0)},\cdots, \check{v}_{2n}^{(1,0)} )$, have period matrices $[\Delta_{d}, Z]$, where $\Delta_{d}={\rm diag} (d_{1}, \cdots , d_{n})$, and as in the previous section $Z$ is a multi-valued holomorphic map from $U\backslash E$ to the Siegel upper half space $\mathfrak{H}_n$. Pulling back by $q$ we will also regard $Z$ as a multi-valued map from $\ti{U}\backslash q^{-1}(E)$, which has the property that
$\check{v}_{j+n}^{(1,0)}=\sum\limits_{i=1}^{n} Z_{ji}d_{i}^{-1}\check{v}_{i}^{(1,0)},$   $j=1, \cdots, n$. From Lemma \ref{le01} (ii) we then get
\begin{equation}\label{houzi}
q^{*}\omega=-\sum_{i=1}^{n} d_{i}^{-1}  \check{v}_{i}\wedge \check{v}_{n+i} =2\sqrt{-1}\sum_{i,j=1}^n d_{i}^{-1}d_j^{-1}{\rm Im}Z_{ij} \check{v}_{i}^{(1,0)}\wedge\ov{\check{v}_{j}^{(1,0)}}. \end{equation}

   The following lemma is a standard consequence of the  Nilpotent Orbit Theorem \cite[Theorem 4.12]{Sc}, see e.g. Chapter V of \cite{Gr}, and we present the brief  proof  for the sake of  completeness.

 \begin{lemma}\label{le02} There are rational matrices $\eta_{i}$, $i=1, \cdots, k$, and a holomorphic matrix valued function $Q(t)$ on $\tilde{U}$ such that   $$Z(t)=Q(t)+\sum_{i=1}^{k}\frac{\log t_{i}}{2\pi \sqrt{-1}}\Delta_{d}\eta_{i}.$$
Furthermore, there is a constant $\lambda>0$ such that all eigenvalues of $\mathrm{Im} Z(t)$ are bounded below by $\lambda$ as $t\in \tilde{U}\backslash q^{-1}(E)$ approaches $q^{-1}(E)$.
  \end{lemma}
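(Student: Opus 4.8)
The plan is to read this off from Schmid's Nilpotent Orbit Theorem applied to the weight-one integral polarized variation of Hodge structures carried by $\Lambda$ on $\ti{U}\backslash q^{-1}(E)$, where by Lemma \ref{le01}(i) the local monodromy around each component $\{t_i=0\}$ is unipotent with logarithm $\mathcal{N}_i=T_i-I$ and $\mathcal{N}_i^2=0$. First I would record how the period matrix transforms: by Lemma \ref{le01}(iii) the forms $\check v_1^{(1,0)},\dots,\check v_n^{(1,0)}$ are single-valued while $T_\gamma v_j-v_j\in L$ for $j\le n$, so in this monodromy-adapted symplectic frame $\mathcal{N}_i$ is represented by a matrix killing $W_1$ and carrying $\Lambda_{\mathbb{R},y}$ into $W_0\subset W_1$; consequently analytic continuation of $Z$ once around $\{t_i=0\}$ acts by the translation $Z\mapsto Z+\Delta_d\eta_i$ for some matrix $\eta_i$, and since $\Lambda$ is an integral local system and the polarization has type $(d_1,\dots,d_n)$, the $\eta_i$ are rational.

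Setting $Q(t)=Z(t)-\sum_{i=1}^k\frac{\log t_i}{2\pi\sqrt{-1}}\Delta_d\eta_i$, the monodromy of the factors $\frac{\log t_i}{2\pi\sqrt{-1}}$ cancels that of $Z$, so $Q$ is single-valued and holomorphic on $\ti{U}\backslash q^{-1}(E)$. The Nilpotent Orbit Theorem \cite[Theorem 4.12]{Sc} (see Chapter V of \cite{Gr}) asserts precisely that the untwisted period map $\exp\!\big(-\sum_i z_i\mathcal{N}_i\big)\cdot\widetilde{\Phi}$, which in the period-matrix realization of $\mathfrak{H}_n$ is exactly $Q$ with $z_i=\frac{\log t_i}{2\pi\sqrt{-1}}$, extends holomorphically across $q^{-1}(E)$; equivalently, its growth estimate shows $Q$ is bounded near $q^{-1}(E)$, and Riemann's extension theorem then promotes the single-valued holomorphic $Q$ to a holomorphic matrix-valued function on all of $\ti{U}$. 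This proves the first assertion.

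For the lower bound, write $\mathrm{Im}\,Z(t)=\mathrm{Im}\,Q(t)-\frac{1}{2\pi}\sum_{i=1}^k\log|t_i|\,\Delta_d\eta_i$, where $\mathrm{Im}\,Q$ is continuous up to $\ti{U}$ and each coefficient $-\frac{1}{2\pi}\log|t_i|\to+\infty$ as $t_i\to0$. Since the nilpotent orbit $Q(0)+\sum_i z_i\Delta_d\eta_i$ lies in $\mathfrak{H}_n$ for $\mathrm{Im}\,z_i\gg0$, letting $\mathrm{Im}\,z_{i_0}\to\infty$ with the remaining $z_i$ fixed and rescaling forces $\Delta_d\eta_{i_0}\geq0$ for every $i_0$; and for any boundary point $t^*\in q^{-1}(E)$, with $I=\{i\le k: t^*_i=0\}$, the limiting mixed Hodge structure there restricts on the common kernel $\bigcap_{i\in I}\ker(\Delta_d\eta_i)$ to a polarized pure Hodge structure of weight one, so $\mathrm{Im}\,Q$ is positive definite on that subspace near $t^*$. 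A compactness argument finishes: if $t_m\to q^{-1}(E)$ with the smallest eigenvalue of $\mathrm{Im}\,Z(t_m)$ tending to $0$, then along a subsequence the unit eigenvectors converge to a unit vector whose real and imaginary parts lie in the relevant common kernel, and evaluating $\mathrm{Im}\,Z(t_m)$ against it and passing to the limit contradicts the positive definiteness of $\mathrm{Im}\,Q$; hence $\mathrm{Im}\,Z(t)\geq\lambda I$ for a uniform $\lambda>0$ near $q^{-1}(E)$.

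The asymptotic expansion in the first half is routine once Lemma \ref{le01} supplies unipotent monodromy and a monodromy-adapted frame --- it is the direct content of the Nilpotent Orbit Theorem. The delicate point, which I expect to be the main obstacle, is the uniform lower bound on the eigenvalues of $\mathrm{Im}\,Z$: the Nilpotent Orbit Theorem by itself yields only the expansion and the semipositivity $\Delta_d\eta_i\geq0$, whereas ruling out eigenvalues of $\mathrm{Im}\,Z$ collapsing to zero requires the strict positivity of $\mathrm{Im}\,Q$ on the common kernels of the $\Delta_d\eta_i$, which comes from the fact that the degenerating variation has a polarized limiting mixed Hodge structure (equivalently, from the $SL_2$-orbit theorem, cf.\ \cite{CCK} and Chapter V of \cite{Gr}).
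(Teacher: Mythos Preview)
Your argument for the asymptotic expansion $Z(t)=Q(t)+\sum_i\frac{\log t_i}{2\pi\sqrt{-1}}\Delta_d\eta_i$ is essentially the paper's: both compute the monodromy translation $Z\mapsto Z+\Delta_d\eta_i$ in the symplectic frame produced by Lemma~\ref{le01}, subtract the logarithmic part to get a single-valued map, and invoke Schmid's Nilpotent Orbit Theorem to extend $Q$ holomorphically across $q^{-1}(E)$.

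For the eigenvalue lower bound the paper takes a more direct route. Rather than separating into the semipositivity $\Delta_d\eta_i\geq 0$ and the positivity of $\mathrm{Im}\,Q$ on kernels followed by a contradiction argument, it simply uses the second assertion of the Nilpotent Orbit Theorem: the nilpotent orbit $Q(0,\dots,0,t_{k+1},\dots,t_n)+\sum_{i=1}^k z_i\Delta_d\eta_i$ already lies in $\mathfrak{H}_n$ for $\mathrm{Im}\,z_i$ large, so its imaginary part has smallest eigenvalue $\geq 2\lambda>0$ uniformly in $(t_{k+1},\dots,t_n)$, and continuity of $Q$ on $\tilde U$ then gives $\mathrm{Im}\,Z(t)\geq\lambda$ once all of $|t_1|,\dots,|t_k|$ are small (which suffices after shrinking the chart). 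Your compactness-and-contradiction argument is valid and has the merit of handling approach to an arbitrary boundary stratum, where only a proper subset $I\subsetneq\{1,\dots,k\}$ of the $t_i$ tends to zero, without shrinking $\tilde U$. One small imprecision: what the limiting mixed Hodge structure at a point $t^*$ with $I=\{i:t^*_i=0\}$ actually yields is positivity on $\bigcap_{i\in I}\ker(\Delta_d\eta_i)$ of the imaginary part of the \emph{partial} untwist $Q_I(t^*)=Q(t^*)+\sum_{i\le k,\,i\notin I}\frac{\log t^*_i}{2\pi\sqrt{-1}}\Delta_d\eta_i$, not of $\mathrm{Im}\,Q(t^*)$ itself. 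This does not damage your argument, since in the limit of $\langle\mathrm{Im}\,Z(t_m)u_m,u_m\rangle$ the bounded terms coming from $i\le k$, $i\notin I$ combine with $\langle\mathrm{Im}\,Q(t^*)u,u\rangle$ to produce exactly $\langle\mathrm{Im}\,Q_I(t^*)u,u\rangle$, and the contradiction goes through.
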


\begin{proof}
   Note that   there are unique local sections $\theta_{1}, \cdots, \theta_{n}$ of $\mathcal{F}^{1}=T^{(1,0)}\tilde{U}\backslash q^{-1}(E)$ such that $\langle \theta_{i}, \check{v}_{j}\rangle =d_{i} \delta_{ij}$,  $ 1\leq i,j\leq n$,  and  equivalently $ \langle \theta_{i}, \check{v}_{j}^{(1,0)}\rangle = 2d_{i} \delta_{ij}$.   Hence $ \langle \theta_{i}, \check{v}_{j+n}^{(1,0)}\rangle = 2Z_{ji}$,  $\langle \theta_{i}, \check{v}_{j+n}\rangle = Z_{ji} $,  and  $$ \theta_{i}=d_{i} v_{i}+ \sum_{j=1}^{n}Z_{ij}v_{j+n}, \  \  \ i=1, \cdots, n.$$
Let $\mathfrak{D}$ be the classifying space of the polarized variation of Hodge structures $(T^{(1,0)}N_{0}\subset TN_{0}\otimes_{\mathbb{R}}\mathbb{C}, \Lambda, \omega)$, and
 $\mathcal{P}: \tilde{U}\backslash q^{-1}(E) \rightarrow \mathfrak{D}/ \Gamma$ be the the  period map, where $\Gamma$ is a discrete subgroup of $Sp(2n, \mathbb{R})$ depending on the polarization $\omega$. Note that
  $\mathfrak{D}$ can be identified as  the Siegel upper half space $$\mathfrak{H}_{n}=\{n\times n  \ {\rm complex  \ matrix} \ A |A^{T}=A,   \  \  {\rm Im}A >0\},$$ via $T^{(1,0)}N_{0}={\rm Span}_{\mathbb{C}}(\theta_{1}, \cdots, \theta_{n}) \mapsto Z(t)$.

The universal covering  $\bar{U}\rightarrow \tilde{U}\backslash q^{-1}(E) $ is given by $t_{i}=\exp 2\pi\sqrt{-1}q_{i}$, $i=1, \cdots, k$, and $t_{j}=q_{j}$, $j=k+1, \cdots, n$. If  $\tilde{\mathcal{P}}: \bar{U} \rightarrow \mathfrak{D}$ is the lifting of $\mathcal{P}$, we have $$ \tilde{\mathcal{P}}(\cdots,q_{i}+1, \cdots)=T_{i}\tilde{\mathcal{P}}(\cdots,q_{i}, \cdots),  \  \  \  \ i=1, \cdots, k.$$ Equivalently,   $$ \begin{bmatrix}
      \Delta_{d} ,Z(q_{i}+1)
     \end{bmatrix}=\begin{bmatrix}
      \Delta_{d} ,Z(q_{i})
     \end{bmatrix}\begin{bmatrix}
     I , \eta_{i} \\ 0, I
     \end{bmatrix},$$  where  $ \begin{bmatrix}
      I ,0           \\
      \eta^{T}_{i}, I
     \end{bmatrix}$  is the matrix of $T_{i}$ with  respect  to the basis $v_{1}, \cdots, v_{2n}$, which are  rational matrices,  and thus  $$   Z(q_{i}+1)=  Z(q_{i})+ \Delta_{d} \eta_{i} .$$
If we define  $$\tilde{\mathcal{Q}}=\exp\left(-\sum_{i=1}^{k}q_{i}\mathcal{N}_{i}\right)\tilde{\mathcal{P}},$$ where  $\mathcal{N}_{i}=\log T_{i}=T_{i}-I$,  then $\tilde{\mathcal{Q}}(\cdots,q_{i}+1, \cdots)=\tilde{\mathcal{Q}}(\cdots,q_{i}, \cdots)$, and $\tilde{\mathcal{Q}}$ descends to a holomorphic  map $\mathcal{Q}: \tilde{U}\backslash  q^{-1}(E)  \rightarrow \mathfrak{D}$.

   By the Schmid's  Nilpotent Orbit Theorem \cite[Theorem 4.12]{Sc} (see also Chapter V of  \cite{Gr}),  $\mathcal{Q}$ extends to a holomorphic  map $\mathcal{Q}: \tilde{U} \rightarrow \check{\mathfrak{D}}$, where as in \cite{Sc} $\check{\mathfrak{D}} $ denotes the compact dual of $\mathfrak{D}$, which implies that there is a symmetric-matrices valued  holomorphic function $Q(t)$ on $\tilde{U} $ such that $$  \begin{bmatrix}
      \Delta_{d} , Q(t)
     \end{bmatrix}=\begin{bmatrix}
      \Delta_{d} , Z(t)
     \end{bmatrix}\prod_{i=1}^{k}\begin{bmatrix}
     I , - q_{i}\eta_{i} \\ 0, \  \   \  I
     \end{bmatrix}. $$
      Thus $$Z(t)=Q(t)+ \sum_{i=1}^{k}\frac{\log t_{i}}{2\pi \sqrt{-1}}\Delta_{d}\eta_{i}.$$
To estimate the eigenvalues of $\mathrm{Im}Z(t)$, we note as in \cite[Chapter V]{Gr} that each of the matrices $\eta_i$ is of the form
$\begin{bmatrix} \eta_i^1 \ 0\\ 0 \ \ 0\end{bmatrix}$ with $\eta_i$ a $\nu\times\nu$ positive definite symmetric matrix, where $\nu=\dim W_0\leq n$. Positive definiteness follows from the Nilpotent Orbit Theorem, which also implies that the imaginary part of
$$Q(0,\dots,0,t_{k+1},\dots,t_n)+\sum_{i=1}^{k}\frac{\log t_{i}}{2\pi \sqrt{-1}}\Delta_{d}\eta_{i},$$
is strictly positive definite (say with smallest eigenvalue $2\lambda>0$) whenever $t_1,\dots,t_k$ are sufficiently small while $t_{k+1},\dots,t_n$ remain in a fixed small polydisc. From this, and the fact that $Q$ is holomorphic on all of $\ti{U}$, it follows easily that the smallest eigenvalue of $\mathrm{Im} Z(t)$ is at least $\lambda$ as $t$ approaches $q^{-1}(E)$.
      \end{proof}

\begin{lemma}\label{extend}
The sections $\check{v}^{(1,0)}_{1}, \cdots , \check{v}^{(1,0)}_{n}$ of $\check{\Lambda}$ over $\tilde{U}\backslash q^{-1}(E)$ extend to holomorphic $1$-forms on $\ti{U}$.
\end{lemma}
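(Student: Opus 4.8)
The plan is to feed the nilpotent‑orbit normal form for $Z$ from Lemma \ref{le02} into an untwisting of the flat frame, to identify the relevant canonical extension, and then to rule out logarithmic poles using the monodromy‑invariance of Lemma \ref{le01}(iii).

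First I would untwist. Since $v_{n+1},\dots,v_{2n}$ span $L\subset W_1=\bigcap_l\ker\mathcal N_l$, they are already single‑valued on $\tilde U\setminus q^{-1}(E)$; setting $q_l=\tfrac{\log t_l}{2\pi\sqrt{-1}}$ and $\tilde v_j=\exp\!\bigl(-\textstyle\sum_l q_l\mathcal N_l\bigr)v_j$, the sections $\tilde v_1,\dots,\tilde v_{2n}$ form a holomorphic frame over $\tilde U$ of the Deligne canonical extension $\overline{\mathcal V}$ of the flat bundle $\mathcal V=\Lambda\otimes_{\mathbb Z}\mathcal O_{N_0}$, and $\tilde v_{j+n}=v_{j+n}$. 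Because $\mathcal N_l\mathcal N_{l'}v_i=0$ (as $\mathcal N_{l'}v_i\in W_0\subset W_1$), we have $v_i=\tilde v_i+\sum_l q_l\mathcal N_l\tilde v_i$; substituting this together with $Z_{ij}=Q_{ij}(t)+\sum_l q_l(\Delta_d\eta_l)_{ij}$ into $\theta_i=d_iv_i+\sum_j Z_{ij}v_{j+n}$ and using that $\theta_i$ is single‑valued to force the (a priori multivalued) $q_l$‑terms to cancel, one obtains the clean identity $\theta_i=d_i\tilde v_i+\sum_j Q_{ij}(t)\tilde v_{j+n}$. Hence $\mathcal F^1=\operatorname{span}(\theta_1,\dots,\theta_n)$ extends to the holomorphic subbundle $\overline{\mathcal F^1}\subset\overline{\mathcal V}$, globally framed over $\tilde U$ by $\theta_1,\dots,\theta_n$ (their coefficient matrix $[\Delta_d\mid Q(t)]$ in the frame $\tilde v_\bullet$ has rank $n$ at every point). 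Since the Gauss--Manin holomorphic structure on $\mathcal F^1$ is, by construction of the completely integrable system, the Dolbeault structure on $T^{(1,0)}N_0$, the bundle $\overline{\mathcal F^1}$ is a locally free extension of the tangent sheaf of $N_0$ across $q^{-1}(E)$.

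Next, the pairings $\langle\theta_i,\check v_j^{(1,0)}\rangle=2d_i\delta_{ij}$ say exactly that $\check v_i^{(1,0)}=2d_i\,\theta^i$, where $\theta^1,\dots,\theta^n$ is the coframe dual to $\theta_1,\dots,\theta_n$; therefore $\check v_i^{(1,0)}$ extends over all of $\tilde U$ to a holomorphic section of $(\overline{\mathcal F^1})^{\vee}$, a locally free extension of $\Omega^1_{N_0}$. It then remains to upgrade ``section of $(\overline{\mathcal F^1})^{\vee}$'' to ``honest holomorphic $1$‑form''. By Schmid's nilpotent orbit theorem the residue endomorphisms of $\overline{\mathcal V}$ along each $\{t_l=0\}$ are $-\tfrac1{2\pi\sqrt{-1}}\mathcal N_l$, so $(\overline{\mathcal F^1})^{\vee}$ is squeezed between $\Omega^1_{\tilde U}$ and $\Omega^1_{\tilde U}(\log q^{-1}(E))$, and $\check v_i^{(1,0)}$ lies in $\Omega^1_{\tilde U}$ precisely when its Poincar\'e residue along each $\{t_l=0\}$ vanishes. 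This residue is read off, for instance by differentiating $\check v_{j+n}^{(1,0)}=\sum_i d_i^{-1}Z_{ji}\check v_i^{(1,0)}$ and using the explicit shape of $Z$ from Lemma \ref{le02}, to be the action of $\check{\mathcal N}_l$ on $\check v_i$, which is $0$ by Lemma \ref{le01}(iii). (Alternatively, a nonzero residue would produce a $\tfrac{dt_l}{t_l}\wedge\tfrac{d\bar t_l}{\bar t_l}$ term in $q^{*}\omega=2\sqrt{-1}\sum d_i^{-1}d_j^{-1}\,{\rm Im}\,Z_{ij}\,\check v_i^{(1,0)}\wedge\overline{\check v_j^{(1,0)}}$ whose coefficient is bounded below thanks to ${\rm Im}\,Z\geq\lambda>0$, forcing $\int_{N_0}\omega^n=\infty$ and contradicting $\int_{N_0}\omega^n=c\int_{f^{-1}(N_0)}\omega_M^m<\infty$ from \eqref{eq3}.) Thus $\check v_i^{(1,0)}$ extends to a holomorphic $1$‑form on $\tilde U$.

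The step I expect to be the main obstacle is this last one: pinning down the canonical extension $(\overline{\mathcal F^1})^{\vee}$ inside the logarithmic cotangent sheaf and executing the residue computation cleanly. The delicate point is that logarithmic poles genuinely do occur in the $\check v_{j+n}^{(1,0)}$ — that is exactly where the $\log t_l$ of $Z$ lives — so one must verify that they are confined to those and do not contaminate the $\check v_i^{(1,0)}$ with $i\leq n$; this is precisely the role played by the monodromy‑invariance of $\check v_1,\dots,\check v_n$ from Lemma \ref{le01}(iii) together with the positivity of the $\eta_l$ provided by Lemma \ref{le02}.
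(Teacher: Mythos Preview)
Your route through the Deligne canonical extension is quite different from the paper's, and it has a genuine gap at exactly the step you flagged. The paper's proof is a short analytic argument: writing $\check v_j^{(1,0)}=\sum_k h_j^k\,dt_k$, the lower bound $\mathrm{Im}\,Z\geq\lambda>0$ from Lemma~\ref{le02} together with \eqref{houzi} gives
\[
q^*\omega\;\geq\;C^{-1}\sqrt{-1}\sum_{i,k,\ell}h_i^k\,\overline{h_i^\ell}\,dt_k\wedge d\bar t_\ell.
\]
Since $q^*\omega$ extends across $q^{-1}(E)$ as a closed positive $(1,1)$-current (it has a locally bounded psh potential), its coefficients are Radon measures with locally finite mass; hence each $h_i^k$ is locally $L^2$ on $\tilde U$ and therefore extends holomorphically across the analytic set $q^{-1}(E)$.

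The gap in your argument is the assertion that $(\overline{\mathcal F^1})^\vee$ sits between $\Omega^1_{\tilde U}$ and $\Omega^1_{\tilde U}(\log q^{-1}(E))$. The canonical extension $\overline{\mathcal F^1}$ is an \emph{abstract} locally free sheaf extending $\mathcal F^1\cong T^{(1,0)}N_0$; it is built purely from the flat (Gauss--Manin) structure and carries no a priori comparison map to the tangent sheaf $T^{(1,0)}\tilde U$, which is the \emph{other} natural extension of $T^{(1,0)}N_0$. The residues $-\tfrac{1}{2\pi\sqrt{-1}}\mathcal N_l$ you cite act on the fibers of $\overline{\mathcal V}$ and say nothing about how sections of $(\overline{\mathcal F^1})^\vee$ look when expressed in the frame $dt_1,\dots,dt_n$. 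Relating these two extensions is essentially the content of the lemma itself, so invoking the squeeze here is circular. Your parenthetical alternative does not rescue this: it already presupposes that $\check v_i^{(1,0)}$ has at worst a simple pole (so that ``residue'' is meaningful), which is precisely what has not been established; and even then, finiteness of $\int_{N_0}\omega^n$ only bounds $|\det(h_i^k)|^2$ in $L^1$, not the individual $|h_i^k|^2$. It is the local mass of the $(1,1)$-current $q^*\omega$ (not of the volume form $\omega^n$) that yields the needed $L^2$ control on each coefficient.
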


\begin{proof}
Writing $\check{v}^{(1,0)}_{j}(t)=\sum_{k=1}^n h_j^k(t)dt_k$ for some holomorphic functions $h_j^k(t)$ on $\ti{U}\backslash q^{-1}(E)$, \eqref{houzi} gives us
\[\begin{split}
q^{*}\omega&=2\sqrt{-1}\sum_{i,j,k,\ell=1}^n d_{i}^{-1}d_j^{-1}{\rm Im}Z_{ij}(t) h_i^k(t) \ov{h_j^\ell(t)}dt_k\wedge d\ov{t}_\ell\\
&\geq C^{-1}\sqrt{-1}\sum_{i,k,\ell=1}^nh_i^k(t) \ov{h_i^\ell(t)}dt_k\wedge d\ov{t}_\ell,
\end{split}\]
using Lemma \ref{le02}. Since the coefficients of the closed positive current $q^*\omega$ on $\ti{U}$ are Radon (complex) measures, they have locally finite mass near any point in $q^{-1}(E)$, and so by the previous inequality all the holomorphic functions $h_i^k$ are $L^2$ integrable on $\ti{U}\backslash q^{-1}(E)$ (up to shrinking $\ti{U}$), and hence they extend holomorphically across $q^{-1}(E)$ (see e.g. \cite[Proposition 1.14]{Oh}).
\end{proof}
  Since $\check{v}_{j}^{(1,0)}$, $j=1, \cdots, n$, are holomorphic  Lagrangian sections of $T^{*(1,0)}\tilde{U}\backslash q^{-1}(E)$ with respect to the canonical holomorphic  symplectic form $\Omega_{can}$,   the  extensions $\check{v}_{1}^{(1,0)}, \cdots, \check{v}_{n}^{(1,0)}$  are holomorphic   Lagrangian sections of $T^{*(1,0)}\tilde{U}$.  Then $d \check{v}_{j}^{(1,0)}= (\check{v}_{j}^{(1,0)})^{*}\Omega_{can}=0$, $j=1, \cdots, n$,  (where the $(1,0)$ form $\check{v}_j^{(1,0)}$ gives a holomorphic map $\ti{U}\to T^{*(1,0)}\tilde{U}$, under which we pull back the canonical holomorphic symplectic form), and   there are holomorphic functions $w_{1}, \cdots, w_{n}$ on $\tilde{U}$ such that $\check{v}_{j}=d_{j} {\rm Re} dw_{j}$, $j=1, \cdots, n$, which give  special holomorphic coordinates on $\tilde{U}\backslash q^{-1}(E)$.

Given any point in $\tilde{U}\backslash q^{-1}(E)$, we can define the conjugate coordinates $w_{1}^{*}, \cdots, w_{n}^{*}$ in a neighborhood of the point by ${\rm Re} dw_{j}^{*}=\check{v}_{j+n}$. This way we obtain that the holomorphic multivalued matrix  function $Z=[Z_{ij}]$ on $\tilde{U}\backslash q^{-1}(E)$ is in fact equal to $Z_{ij}=\frac{\partial w_{j}^{*}}{\partial w_{i}}$.

Lemma \ref{le02} implies   that there are $b^{p}_{ij}\in\mathbb{Q}$, $p=1, \cdots, k$, $1\leq i,j\leq n$, such that $$Z_{ij}(t)=Q_{ij}(t)+\sum_{p=1}^{k}\frac{\log t_{p}}{2\pi \sqrt{-1}}b^{p}_{ij}.$$
If we denote  $G_{ij}=\sum\limits_{p}Z_{ip}\frac{\partial w_{p}}{\partial t_{j}}=\frac{\partial w_{i}^{*}}{\partial t_{j}}$,  then $dw^{*}_{i}=\sum\limits_{j}G_{ij}dt_{j}$, and $$G_{ij}(t)=A_{ij}(t)+\sum_{p=1}^{k}\frac{\log t_{p}}{2\pi \sqrt{-1}}B^{p}_{ij}(t),$$ where $A_{ij}(t)$ and $B^{p}_{ij}(t)$ are holomorphic functions on $\tilde{U}$.
 By $0=ddw^{*}_{i}=d\left(\sum\limits_{j}G_{ij}dt_{j}\right)$, we have
 $$\frac{\partial G_{ij}}{\partial t_{\ell}}=\frac{\partial G_{i\ell}}{\partial t_{j}},$$ for any $j \neq \ell$, and
  $$ \frac{\partial A_{ij}}{\partial t_{\ell}}+\frac{\varepsilon (\ell) B^{\ell}_{ij}}{2\pi \sqrt{-1} t_{\ell}}+\sum_{p=1}^{k}\frac{\log t_{p}}{2\pi \sqrt{-1}}\frac{\partial B^{p}_{ij}}{\partial t_{\ell}}=\frac{\partial A_{i\ell}}{\partial t_{j}}+\frac{\varepsilon (j)B^{j}_{i\ell}}{2\pi \sqrt{-1} t_{j}}+\sum_{p=1}^{k}\frac{\log t_{p}}{2\pi \sqrt{-1}}\frac{\partial B^{p}_{i\ell}}{\partial t_{j}},  $$ where $\varepsilon(x)=1$ if $1\leq x \leq k$, and $\varepsilon(x)=0$ if $k+1\leq x \leq n$.
  For any given $1\leq \ell\leq k$, and $j\neq \ell$ and any $i$, we use this equation near a point where $t_\ell=0$ and $t_j\neq 0$, and see that $B^\ell_{ij}/t_\ell$ blows up at worst logarithmically near $t_\ell=0$.
  Therefore  $B^{\ell}_{ij}=t_{\ell}\tilde{B}^{\ell}_{ij}$ and  $B^{j}_{i\ell}=t_{j}\tilde{B}^{j}_{i\ell}$,  for any $j \neq \ell\leq k$, where $\tilde{B}^{\ell}_{ij}$ are holomorphic functions.  We obtain that
  \begin{equation}\label{log}
  G_{ij}(t)=A_{ij}(t)+\frac{\ve(j)\log t_{j}}{2\pi \sqrt{-1}}B^{j}_{ij}(t)+\sum_{p\neq j}\frac{t_p\log t_{p}}{2\pi \sqrt{-1}}\ti{B}^{p}_{ij}(t),
  \end{equation}
 where the summation is for $1\leq p\leq k$ and $p\neq j$. Since $|t\log t|\to 0$ when $t$ approaches  $0$ (for any given branch of $\log$), we get
  $$G_{ij}(t)=\frac{\varepsilon(j)\log t_{j}}{2\pi \sqrt{-1}}B^{j}_{ij}+O(1).$$    Since
$$q^{*}\omega=\frac{1}{4}\sum_{ij} (Z_{ij}-\overline{Z}_{ij}) dw_{i}\wedge d\bar{w}_{j}=\frac{1}{4}\sum_{i}(dw_{i}^{*}\wedge d\bar{w}_{i}-dw_{i}\wedge d\bar{w}^{*}_{i}), $$ we have
\begin{equation}\label{lab}
g_{i\bar{j}}=\frac{-\sqrt{-1}}{4}\sum_{p}\left(\frac{\partial w_{p}^{*}}{\partial t_{i}}\overline{\frac{\partial w_{p}}{\partial t_{j}}}-\frac{\partial w_{p}}{\partial t_{i}}\overline{\frac{\partial w_{p}^{*}}{\partial t_{j}}}\right)=\frac{-\sqrt{-1}}{4}\sum_{p}\left(G_{pi}\overline{\frac{\partial w_{p}}{\partial t_{j}}}-\frac{\partial w_{p}}{\partial t_{i}}\overline{G_{pj}}\right).
\end{equation}
We obtain the conclusion that $$ |g_{i\bar{j}}|\leq C(1-\varepsilon(i)\log |t_{i}|-\varepsilon(j)\log |t_{j}|), \  \ \ i,j=1, \cdots, n,$$ for a uniform constant $C>0$. Furthermore the
(single-valued) functions $g_{i\ov{j}}$ with $i,j>k$ extend continuously to $\ti{U}$ since in \eqref{lab} only the (multi-valued) functions $G_{pi}$ with $i>k$ are involved, so we can use \eqref{log} and again that $|t\log t|\to 0$ as $t\to 0$, for any given branch of $\log$.
\end{proof}

\begin{remark}
In the case when $f:M\to N$ is an elliptic surface, then Kodaira's classification of the possible singular fibers \cite{Ko} gives explicit formulas for the (multivalued) period function $\tau(y)$, such that $\check{\Lambda}_y=\mathrm{Span}_{\mathbb{Z}}(1,\tau(y))$, from which one can explicitly see that, after a branched covering $y=t^k$, the only possible singularities of $\tau(t)$ are of the form $\log t$ (see e.g. \cite[p.377]{He}). Lemma \ref{le02} is a (well-known) higher-dimensional generalization of this observation. Very similar (and more precise) estimates were obtained by Hwang-Oguiso \cite{HO} under more restrictive assumptions.
\end{remark}

\section{Applications to  SYZ for hyperk\"ahler manifolds}\label{sectsyz}
In this section we  apply   Theorem \ref{theorem1} to a refined SYZ conjecture due to  Gross-Wilson (\cite[Conjecture 6.2]{GW}), Kontsevich-Soibelman (\cite[Conjectures 1 and 2]{KS}) and Todorov (\cite[p. 66]{Man}) (see also \cite{Fuk}).

\subsection{Metric SYZ} Let   $X$ be   a Calabi-Yau $n$-manifold, and  $\mathfrak{M}_{X}$ be the moduli space of complex deformations of  $X$. If $\overline{\mathfrak{M}}_{X}$ denotes a certain  compactification, then  a large complex limit point  $p\in \overline{\mathfrak{M}}_{X}$ is a point representing the `worst possible degeneration' of the complex structures, which can be formulated via Hodge  theory (cf. \cite{Morr}).  Mirror symmetry predicts that for any large complex limit point $p\in \overline{\mathfrak{M}}_{X}$, there is an  another Calabi-Yau manifold $\check{X}$, called the mirror,  and an isomorphism between a neighborhood of $p$ in $\overline{\mathfrak{M}}_{X}$ and a neighborhood of a large radius limit in the complexified K\"{a}hler moduli space of $\check{X}$, which preserves some additional structures such as Yukawa couplings. Here a large radius limit point means the limit of $\exp 2\pi \sqrt{-1}(\mathbf{B}+s\sqrt{-1}\check{\omega})$, when $s\rightarrow \infty$,  in a certain compactification of $H^{2}(\check{X}, U(1))+\sqrt{-1}\mathbb{K}_{\check{X}}$, where $\mathbb{K}_{\check{X}}$ is the K\"{a}hler cone, $\check{\omega}$ is a K\"{a}hler metric,  and $\mathbf{B}\in H^{2}(\check{X}, U(1))$ is called a B-field.

 In  \cite{SYZ}, Strominger, Yau and Zaslow proposed  a  conjecture, the so called SYZ conjecture,  for constructing mirror Calabi-Yau manifolds via dual special Lagrangian fibrations. More precisely,
    the SYZ conjecture says  that near a large complex structure limit point $p$, the corresponding Calabi-Yau manifolds should admit a special Lagrangian torus fibration  such that the mirror $\check{X}$ should be obtained as a compactification of the dual torus fibration, after suitable instanton corrections induced from  the singular fibers. This has generated an immense amount of work, and we refer the reader to the surveys \cite{ABC,gross,GHJ} and references therein for more information.

 Later, a metric version of the SYZ conjecture was proposed by  Gross, Wilson, Kontsevich,  Soibelman and Todorov \cite{GW,KS,KS2,Man} by using   the collapsing of Ricci-flat K\"{a}hler metrics, which is also related to non-Archimedean geometry (cf. \cite{BJ}). Let $X_{t}$, $t\in (0,1]$, be a family of  $n$-dimensional Calabi-Yau manifolds such that the complex structures of $X_{t}$ converge  to a large complex limit point $p$ in $\overline{\mathfrak{M}}_{X}$.  The metric  SYZ conjecture \cite[Conjecture 6.2]{GW}, \cite[Conjecture 1]{KS}, asserts that there are  Ricci-flat K\"ahler metrics  $\ti{\omega}_{t}$ on $X_{t}$,   for $t\neq 0$, such that
       $(X_{t}, {\rm diam}_{\ti{\omega}_{t}}^{-2}(X_{t}) \ti{\omega}_{t})$ converges to a compact metric space $(Y,d_{Y})$  in the Gromov-Hausdorff sense, when $t\rightarrow 0$.
 Furthermore, there is an open and dense subset $Y_0\subset Y$ which is a smooth real $n$-dimensional Riemannian manifold $(Y_0,g)$,  and   admits    a real  affine structure. The singular locus $S_{Y}=Y\backslash Y_{0}$ is of Hausdorff codimension at least $2$.  The metric space  $(Y, d_{Y})$ is the metric completion of $(Y_{0}, g) $, and $ g$ is  a Monge-Amp\`ere  metric  on $Y_{0}$, i.e.  in local  affine coordinates $(y_{1},  \cdots, y_{n})$,  there is a potential function $\phi$ such that $$g= \sum_{ij} \frac{ \partial^{2} \phi}{ \partial y_{i}  \partial y_{j}} dy_{i} dy_{j},    \   \  {\rm and}  \  \   \det \Big(\frac{\partial^{2}\phi}{ \partial y_{i}  \partial y_{j} }\Big ) =c,$$
 for some $c\in\mathbb{R}_{>0}$.   When $\ti{\omega}_t$ have holonomy $SU(n)$ (resp. hyperk\"ahler),
 $Y$
 should  be homeomorphic to  an $n$-sphere (resp.  $\mathbb{CP}^n$).
 It is not hard to see that the conjecture is true when $X_t$ are Abelian varieties (see e.g. \cite{Od}).
This conjecture was verified by Gross and Wilson for elliptically fibered  K3 surfaces with only type $I_{1}$ singular fibers  in \cite{GW}, for large complex structure limits which arise as hyperk\"ahler rotations from our setup in the Introduction.  In  \cite{GTZ2},   Gross-Wilson's result was extended  to all elliptically fibered K3 surfaces, and
a partial results for higher dimensional hyperk\"ahler manifolds were obtained in \cite{GTZ, GTZ2}. As mentioned in the Introduction, Corollary \ref{ksthm} proves this conjecture for all large complex structure limits of projective hyperk\"ahler manifolds which arise from our setup via hyperk\"ahler rotation.  An  analogue   of this
conjecture was proved for canonically polarized manifolds  in \cite{Zh2}.

The next step in the SYZ program is to  construct  the mirror $ \check{X}$ as a certain compactification of $TY_{0}/\Lambda$, for a lattice subbundle $\Lambda$ of $TY_{0}$.  This is the so called the reconstruction problem, and is of great interest in mirror symmetry
(see  \cite{gross,KS2,GS}). The reconstruction problem suggests a more explicit behaviour of the Ricci-flat K\"{a}hler metrics $\ti{\omega}_{t}$ near the collapsing limit \cite[Conjecture 2]{KS}, which asserts that  $\ti{\omega}_{t}$ is asymptotic to certain  semi-flat Ricci-flat K\"{a}hler metrics.

\subsection{Semi-flat  hyperk\"{a}hler structures}  In this subsection, we recall the construction of   semi-flat hyperk\"{a}hler structures on algebraic  completely   integrable systems \cite{Fr}, and the semi-flat  SYZ construction  studied in \cite{Hi,Hi1}. The next subsection applies Theorem \ref{theorem1} to the metric  version of SYZ conjecture  for compact  hyperk\"{a}hler manifolds, and shows that the hyperk\"{a}hler structures approach  such semi-flat hyperk\"{a}hler structures near the limit.   We use the same notations  as in subsection \ref{4.2}.

Let  $(f: M_{0} \rightarrow N_{0}, [\alpha], \Omega)$ be an  algebraic  completely   integrable system, $\omega$ be the induced  special K\"{a}hler metric on $N_{0}$, and $g$ be the corresponding Riemannian metric.  The fibers $M_y=f^{-1}(y),y\in N_0$, are Abelian varieties  of type $(d_1,\dots,d_n)$.
Assume furthermore that there is a holomorphic Lagrangian  section $\sigma:N_0\to M_0$, which as mentioned in subsection \ref{4.2} implies that $M_0\cong T^{*(1,0)}N_0/\check{\Lambda}$ for a lattice subbundle $\check{\Lambda}$, and as before let $p: T^{*(1,0)}N_{0} \rightarrow M_{0}$ be the holomorphic covering map, which satisfies  $\Omega_{can}=p^{*}\Omega$, and $\check{\Lambda}=\ker p$.

For any $ t\in (0,1]$, we define a family of semi-flat  K\"{a}hler metrics  on $M_{0}$ by \begin{equation}\label{eq6.7}\omega_{\mathrm{SF},t}=t^{-\frac{1}{2}}f^{*}\omega+t^{\frac{1}{2}}\omega_{\rm SF},\end{equation} where $\omega_{\rm SF}$ is given by Theorem \ref{semiflat}, which satisfies  $\omega_{\mathrm{SF},t}|_{M_{y}}=\sqrt{t}\omega_{\rm SF}|_{M_{y}}\in \sqrt{t}[\alpha_{y}]$,  for any $y\in N_{0}$, and we denote by $g_{\mathrm{SF},t}$ the corresponding Riemannian  metric.  Note that $t^{\frac{1}{2}}\omega_{\mathrm{SF},t}$ is the semi-flat K\"{a}hler metric constructed in Section 2 of \cite{GTZ}. For any fiber $M_{y}$,  the diameter  $${\rm diam}(M_{y}, g_{\mathrm{SF},t}|_{M_{y}})\leq Ct^{\frac{1}{4}} \rightarrow 0,  \  \ {\rm when} \  \   t\rightarrow 0,$$ and thus $(M_{0}, g_{\mathrm{SF},t})$ collapses the torus fibers.

For an open subset $U\subset N_{0}$, let   $y_{1}, \cdots, y_{2n}$ be   the flat  Darboux coordinates such that  $dy_{i}$ satisfy (\ref{eq4.2++}).
   For the local trivialization $ T^{*}N_{0}|_{U}\cong U\times \mathbb{R}^{2n}$ by $\sum\limits_{i} x_{i}dy_{i} \mapsto (y_{1}, \cdots, y_{2n}, x_{1}, \cdots, x_{2n})$,  $dx_{1}, \cdots, dx_{2n}$ are well-defined closed  1-forms on $f^{-1}(U)$, and we have   \begin{equation}\label{eq6.6}\omega_{\rm SF}=- \sum_{i=1}^{n}dx_{i}\wedge dx_{i+n}=\frac{\sqrt{-1}}{2}\sum\limits_{ij}{\rm Im}Z_{ij}^{-1}\vartheta_{i}\wedge \bar{\vartheta}_{j}, \end{equation} where $\vartheta_{i}=d x_{i}- \sum\limits_{j=1}^{n}Z_{ij}d x_{j+n}$, $i=1, \cdots, n$, which may not be closed (cf. \cite[Lemma 3.3]{He}).
  Thus  $$\omega_{\mathrm{SF},t}=t^{-\frac{1}{2}}\sum_{i=1}^{n}dy_{i}\wedge dy_{i+n}-t^{\frac{1}{2}}\sum_{i=1}^{n}dx_{i}\wedge dx_{i+n}.$$
In particular, we see that $p^{*}\omega_{\mathrm{SF},t}^{2n}= \Omega_{can}^{n}\wedge \overline{\Omega}_{can}^{n}$, which shows that,
by changing variables if necessary, $(p^{*}\omega_{\rm SF,t}, \Omega_{ can}=p^{*} \Omega)$ is the hyperk\"ahler structure on $T^{*(1,0)}N_{0}$ constructed in Section 2 of \cite{Fr} (see also \cite[Section 3.2]{He}), and
$(\omega_{\mathrm{SF},t}, \Omega, g_{\mathrm{SF},t})$, $ t\in (0,1]$,  is a family of hyperk\"{a}hler structures on $M_{0}$.

   By hyperk\"{a}hler rotation, we define a family of complex structures $J_{t}$ with hyperk\"{a}hler structures \begin{equation}\label{eq6.8}\omega_{J_{t}}={\rm Re}\Omega,  \  \  \ \Omega_{J_{t}}={\rm Im}\Omega+\sqrt{-1}\omega_{\mathrm{SF},t},\end{equation}  and the fibration $f:M_{0}\rightarrow N_{0}$ is a special Lagrangian fibration with respect to $(\omega_{J_{t}}, \Omega_{J_{t}}^{n})$.

  Note that $\omega= \sum\limits_{i}dy_{i}\wedge dy_{i+n}$ under the flat  Darboux coordinates $y_{1}, \cdots, y_{2n}$, and $g$ is a  Monge-Amp\`ere  metric with the local potential $\phi$, i.e.  $\phi_{ij}=\frac{\partial^{2} \phi}{\partial y_{i} \partial y_{j}}=g_{ij}$ as shown in subsection \ref{4.1}.   The Legendre transform of the local potential function  $\phi$ gives the dual affine structure, which  is defined  by the local  dual affine  coordinates $\phi_{1}=\frac{\partial \phi}{\partial y_{1} }, \cdots, \phi_{2n}=\frac{\partial \phi}{\partial y_{2n} }$.

 \begin{lemma}\label{lem6.0} The K\"{a}hler form  $\omega_{J_{t}}$ is induced by the canonical symplectic form on $T^{*}N_{0}$, i.e.  $$\omega_{J_{t}}  =\sum_{i=1}^{2n}dy_{i}\wedge dx_{i}.  $$ If we let   $$\chi_{t,i}=\exp 2\pi\sqrt{-1}(x_{i}+\sqrt{-1}t^{-\frac{1}{2}}\phi_{i}), \  \  \  i=1, \cdots, 2n, $$   then $\chi_{t,1}, \cdots, \chi_{t,2n}$ are holomorphic Darboux coordinates on $f^{-1}(U)$  with respect to $J_{t}$, and
$$ \Omega_{J_{t}}=\frac{t^{\frac{1}{2}}}{4 \pi^{2}\sqrt{-1}}\sum_{i=1}^{n} \frac{d \chi_{t,i}}{ \chi_{t,i}}\wedge \frac{d \chi_{t,i+n}}{ \chi_{t,i+n}}.$$
\end{lemma}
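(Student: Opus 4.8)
The plan is to work in the local trivialization $T^*N_0|_U \cong U \times \mathbb{R}^{2n}$ with coordinates $(y_1,\dots,y_{2n},x_1,\dots,x_{2n})$ adapted to the flat Darboux coordinates, exactly as set up just before the statement. First I would compute $\omega_{J_t} = \operatorname{Re}\Omega$ directly. Recall $\Omega_{can} = p^*\Omega = \sum_{i=1}^n dw_i \wedge dz_i$ in the holomorphic trivialization, and that under the identification $T^{*(1,0)}N_0 \cong T^*N_0$ the fiber coordinates $z_i$ are expressed through $x_1,\dots,x_{2n}$ via $\sum z_i dw_i = \sum x_j dy_j$ (using $dy_i = \operatorname{Re}dw_i$, $dy_{i+n} = -\operatorname{Re}dw_i^*$ and $w_i^* = \sum_j Z_{ij}$-type relations). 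Taking real parts of $\sum dw_i \wedge dz_i$ and using that this is just the canonical (real) symplectic form on the cotangent bundle written in the conjugate pair $(y_i, x_i)$, one gets $\omega_{J_t} = \sum_{i=1}^{2n} dy_i \wedge dx_i$; this is independent of $t$, as it should be since $\operatorname{Re}\Omega$ does not see the rotation. The only mild care needed is bookkeeping of the block structure ($i$ versus $i+n$) and the factors $d_i$ built into $dy_i$, $dx_i$; these are designed precisely so that the pairing is the standard one.

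Next I would verify that the $\chi_{t,i}$ are holomorphic with respect to $J_t$. The complex structure $J_t$ is the one whose $(1,0)$-forms are spanned by the $\operatorname{Re}dw_i$ together with the forms $\vartheta_i = dx_i - \sum_j Z_{ij}\,dx_{j+n}$ from \eqref{eq6.6}, rotated so that $\Omega_{J_t} = \operatorname{Im}\Omega + \sqrt{-1}\,\omega_{\mathrm{SF},t}$; concretely, after hyperkähler rotation the holomorphic coordinates on the base become the Legendre-dual affine coordinates $\phi_i$ combined with the fiber coordinates $x_i$. The claim is that $x_i + \sqrt{-1}\, t^{-1/2}\phi_i$ is $J_t$-holomorphic, so that its exponential is too (the exponential being used to pass from the universal cover to $f^{-1}(U)$, where $x_i$ is only defined mod $\mathbb{Z}$). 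To check holomorphicity I would show $\bar{\partial}_{J_t}(x_i + \sqrt{-1}t^{-1/2}\phi_i) = 0$, i.e. that $d(x_i + \sqrt{-1}t^{-1/2}\phi_i)$ is of type $(1,0)$ for $J_t$. This reduces, via $d\phi_i = \sum_j \phi_{ij}\,dy_j = \sum_j g_{ij}\,dy_j$ and the identification of $g_{ij}$ with $\operatorname{Im}Z$ (from \eqref{eq6.06}, after the $\pm$ and $d_i$ adjustments), to the standard fact that $dx_i - \sqrt{-1}\sum_j(\text{metric})\,dy_j$ spans the $(1,0)$-cotangent space in the rotated structure — essentially \eqref{eq6.6} read backwards. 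The factor $t^{-1/2}$ is exactly what matches the $t^{1/2}$ scaling of $\omega_{\mathrm{SF}}$ in $\omega_{\mathrm{SF},t}$.

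Finally, with the $\chi_{t,i}$ in hand, the formula for $\Omega_{J_t}$ is a computation: $\frac{d\chi_{t,i}}{\chi_{t,i}} = 2\pi\sqrt{-1}\,d(x_i + \sqrt{-1}t^{-1/2}\phi_i)$, so
\[
\frac{t^{1/2}}{4\pi^2\sqrt{-1}}\sum_{i=1}^n \frac{d\chi_{t,i}}{\chi_{t,i}}\wedge\frac{d\chi_{t,i+n}}{\chi_{t,i+n}}
= -\sqrt{-1}\,t^{1/2}\sum_{i=1}^n d(x_i+\sqrt{-1}t^{-1/2}\phi_i)\wedge d(x_{i+n}+\sqrt{-1}t^{-1/2}\phi_{i+n}),
\]
and expanding the right-hand side into real and imaginary parts, the real part assembles into $\operatorname{Im}\Omega$ (using $\omega = \sum dy_i \wedge dy_{i+n}$ and $d\phi_i = \sum g_{ij}dy_j$ to recognize the $\phi$–$\phi$ and cross terms) while the imaginary part assembles, using \eqref{eq6.6}, into $\omega_{\mathrm{SF},t} = t^{-1/2}\sum dy_i\wedge dy_{i+n} - t^{1/2}\sum dx_i\wedge dx_{i+n}$. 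Matching against \eqref{eq6.8} gives the claim. The main obstacle is not any single hard estimate — everything is explicit — but rather getting all the sign conventions, the block indices $i \leftrightarrow i+n$, the polarization constants $d_i$, and the placement of the $t^{\pm 1/2}$ factors mutually consistent with the normalizations of $\omega$, $\omega_{\mathrm{SF}}$, $\Omega$ fixed earlier in \eqref{eq6.06}–\eqref{eq6.6}; I would organize the proof around first nailing down $\omega_{J_t}$, then the $(1,0)$-forms of $J_t$, and only then multiplying out $\Omega_{J_t}$, so that each convention is pinned before it is used.
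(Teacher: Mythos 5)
Your plan is correct and follows essentially the same route as the paper: the paper likewise works in the flat Darboux coordinates and their Legendre duals, derives $dw_i=d(y_i+\sqrt{-1}\phi_{i+n})$, $dw_i^*=-d(y_{i+n}-\sqrt{-1}\phi_i)$ and $z_i=x_i-\sum_j Z_{ij}x_{j+n}$, expands $\Omega=-d\big(\sum_i z_i\,dw_i\big)$ to read off $\operatorname{Re}\Omega=\sum_i dy_i\wedge dx_i$ and $\operatorname{Im}\Omega$, rewrites $\omega_{\mathrm{SF},t}$ via $\omega=\sum_i d\phi_i\wedge d\phi_{i+n}$, and concludes by matching against $\frac{d\chi_{t,i}}{\chi_{t,i}}=2\pi\sqrt{-1}(dx_i+\sqrt{-1}\,t^{-1/2}d\phi_i)$. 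The only (cosmetic) difference is that you make the $J_t$-holomorphicity of the $\chi_{t,i}$ an explicit separate step, which the paper leaves implicit in the same computation.
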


\begin{proof} Denote by $I$ the complex structure on $N_{0}$.
By $g(\cdot,\cdot)=\omega ( \cdot, I \cdot)$, if    $I(\frac{\partial}{\partial y_{i}})=\sum\limits_{j}\frac{\partial}{\partial y_{j}}I_{ji}$, then  $$\begin{bmatrix}
     0, - {\rm id} \\ {\rm id}, \    0
     \end{bmatrix} [\phi_{ij}] =[I_{ij}],$$  i.e.
     $  I_{ij}=-\phi_{i+n,j},  \ \ I_{i,j+n}=-\phi_{i+n,j+n},  \ \ I_{i+n,j}=\phi_{i,j},  I_{i+n,j+n}=\phi_{i,j+n},$     for $1\leq i\leq n$. We have $$I(dy_{i})=\sum\limits_{j}I_{ij}dy_{j}=-d\phi_{i+n}, \  \  \   I(dy_{i+n})=\sum\limits_{j}I_{i+n,j}dy_{j}=d\phi_{i},$$
     $$dw_{i}=d(y_{i}+\sqrt{-1} \phi_{i+n}),\  \  \   dw_{i}^{*}=-d(y_{i+n}-\sqrt{-1} \phi_{i}),$$ for $i=1, \cdots, n$, where the special coordinates $w_1,\dots,w_n$ and their conjugates $w_1^*,\dots,w_n^*$ are defined as in subsection \ref{4.2}.
     By $\omega(\cdot,\cdot)=\omega(I\cdot,I\cdot)$,  $$\omega= \sum\limits_{i}dy_{i}\wedge dy_{i+n}=\sum\limits_{i}I(dy_{i})\wedge I(dy_{i+n})=\sum\limits_{i}d\phi_{i}\wedge d\phi_{i+n},\ \  \ {\rm and}$$
    $$\omega_{\mathrm{SF},t}= \sum_{i}(t^{-\frac{1}{2}}d\phi_{i}\wedge d\phi_{i+n}-t^{\frac{1}{2}} dx_{i}\wedge dx_{i+n}). $$

Under  the  local trivialization $ T^{*(1,0)}N_{0}|_{U}\cong U\times \mathbb{C}^{n}$ by $\sum\limits_{i} z_{i}dw_{i} \mapsto \\ (w_{1}, \cdots, w_{n}, z_{1}, \cdots, z_{n})$, we have  $$\Omega_{can}=\sum\limits_{i} d w_{i}\wedge dz_{i}, \  \  \  {\rm and  }  \  \   z_{i}=x_{i}- \sum\limits_{j=1}^{n}Z_{ij}x_{j+n}$$  by  $\left(\sum\limits_{i=1}^{2n} x_{i}dy_{i}\right)^{(1,0)}=\sum\limits_{i=1}^{n} \left(x_{i}-\sum\limits_{j=1}^{n}Z_{ij}x_{j+n}\right)dw_{i}, \  \  1\leq i\leq n.$
      Then   \[\begin{split}\Omega & =-d(\sum_{i}z_{i}dw_{i}) =
-d(\sum_{i}(x_{i}dw_{i} -x_{i+n}dw_{i}^{*})) \\ & =\sum_{i=1}^{2n}dy_{i}\wedge dx_{i} +\sqrt{-1}\sum_{i=1}^{n}(d\phi_{i+n}\wedge dx_{i}-d\phi_{i}\wedge dx_{i+n}). \end{split}\]

Thus $$\omega_{J_{t}}  =\sum_{i=1}^{2n}dy_{i}\wedge dx_{i}, \  \  \  {\rm and}$$
$$ \Omega_{J_{t}}= \sum_{i=1}^{n} (d\phi_{i+n}\wedge dx_{i}-d\phi_{i}\wedge dx_{i+n}+\sqrt{-1}(t^{-\frac{1}{2}}d\phi_{i}\wedge d\phi_{i+n}-t^{\frac{1}{2}} dx_{i}\wedge dx_{i+n})).  $$ We obtain the conclusion by using $\frac{d \chi_{t,i}}{ \chi_{t,i}}=2\pi \sqrt{-1} (dx_{i}+t^{-\frac{1}{2}}\sqrt{-1}d \phi_{i}). $
\end{proof}

 We remark that on $f^{-1}(U)$, $f$ is the logarithmic map $2\pi \phi_{i}=-t^{\frac{1}{2}}\log |\chi_{t,i}|$, $i=1, \cdots, 2n$, with respect to the dual affine structure, which converts algebro geometric objects in $f^{-1}(U)$ into tropical geometric objects on $U$ when $t\rightarrow 0$.

Now we recall the  semi-flat SYZ construction of $(M_{0}, \omega_{J_{t}}, \Omega_{J_{t}})$ (cf.  \cite{Hi,Hi1}). We ignore B-fields in the following discussion.  Note that $J_{t}\frac{\partial}{\partial x_{i}}=t^{\frac{1}{2}}\frac{\partial}{\partial \phi_{i}}$ and
        $$\omega_{J_{t}}  =\sum_{i=1}^{2n}dy_{i}\wedge dx_{i}  =\sum_{ij}\frac{\partial y_{i}}{\partial \phi_{j}}d\phi_{j}\wedge dx_{i}=\sum_{ij} \phi_{ij}^{-1}d\phi_{j}\wedge dx_{i}. $$
  Thus   for any   $y\in N_{0}$,  we obtain that    $$ g_{\mathrm{SF},t}|_{M_{y}}=t^{\frac{1}{2}} \sum_{ij}\phi_{ij}^{-1}(y)dx_{i}dx_{j}.  $$

Following \cite{SYZ}, we now construct the semi-flat SYZ mirror of $(M_0,\omega_{J_t},J_t)$, as follows. Let   $\check{M}_{0}=TN_{0}/\Lambda$, and $\check{f}: \check{M}_{0} \rightarrow N_{0}$ be the fibration induced by $TN_{0} \rightarrow N_{0}$.
  For any $y\in N_{0}$, the fiber $\check{M}_{y}=T_{y}N_{0}/\Lambda_{y}$ is  the dual Abelian variety of $M_{y}$, which is of type $(d_{n}/d_{n}, \cdots, d_{1}/d_{n})$.
On $TN_{0}$, there is a natural complex structure induced by the flat affine structure on $N_{0}$, which gives a complex structure $\check{J} $ on $\check{M}_{0}$.
 Under a  local trivialization $ TN_{0}|_{U}\cong U\times \mathbb{R}^{2n}$ by $\sum\limits_{i} \check{x}_{i}\frac{\partial}{\partial y_{i}} \mapsto (y_{1}, \cdots, y_{2n}, \check{x}_{1}, \cdots, \check{x}_{2n})$, the complex structure $\check{J} $ is given by the holomorphic coordinates $\xi_{i}=\exp 2 \pi \sqrt{-1 }(\check{x}_{i}+\sqrt{-1}y_{i})$,  $  i=1, \cdots, 2n$.  Note that if  $y_{i}'$, $i=1, \cdots, 2n$, are another flat  Darboux coordinates, and $\check{x}_{i}'$ are induced coordinates with  $\sum\limits_{i} \check{x}_{i}\frac{\partial}{\partial y_{i}}=\sum\limits_{i} \check{x}_{i}'\frac{\partial}{\partial y_{i}'} $, then  $y_{i}=\sum\limits_{j}a_{ij}y_{j}'+b_{i}$ and $\check{x}_{i}=\sum\limits_{j}a_{ij}\check{x}_{j}'$, where $(a_{ij})\in Sp(2n, \mathbb{R})$ and $b_{i}\in \mathbb{R}$.  Therefore $$\check{\Omega}=\frac{-1}{4\pi^{2}}\sum_{i=1}^{n}\frac{d \xi_{i}}{\xi_{i}}\wedge \frac{d \xi_{i+n}}{\xi_{i+n}}$$ is a well-defined holomorphic symplectic form on $\check{M}_{0}$.

  A   natural K\"{a}hler metric on $\check{M}_{0}$ is    \begin{equation}\label{eq6.80}\check{\omega}_{t}=t^{-\frac{1}{2}}\sum_{i=1}^{2n}d \phi_{i}\wedge d \check{x}_{i}=t^{-\frac{1}{2}}\sum_{ij}\frac{\partial^{2} \phi}{\partial y_{i}\partial y_{j}}d y_{i} \wedge d \check{x}_{j}=t^{-\frac{1}{2}}\check{\omega}, \end{equation} which gives a hyperk\"{a}hler structure $(\check{\omega}_{t}, t^{-\frac{1}{2}}\check{\Omega})$  on $\check{M}_{0}$ since $ \det (\phi_{ij}) \equiv {\rm const}$.
If $\check{g}_{\mathrm{SF},t}$ denotes  the Riemannian metric determined by $\check{\omega}_{t}$ and $\check{J}$, then
    $$ \check{g}_{\mathrm{SF},t}|_{\check{M}_{y}}=t^{-\frac{1}{2}} \sum_{ij}\phi_{ij}(y)d \check{x}_{i}d \check{x}_{j}, $$
  by $\check{J}\frac{\partial}{\partial \check{x}_{i}}=\frac{\partial}{\partial y_{i}}$, for a  $y\in N_{0}$.
The semi-flat SYZ mirror of $(M_{0}, \omega_{J_{t}}, J_{t})$ is  $(\check{M}_{0}, \check{\omega}_{t}, \check{J})$ in the sense of T-duality (cf. \cite{SYZ} and Chapter 1.3 in \cite{ABC}), i.e. $(\check{M}_{y}, \check{g}_{\mathrm{SF},t}|_{\check{M}_{y}})$ is the dual torus of $(M_{y}, g_{\mathrm{SF},t}|_{M_{y}})$ for any $y\in N_{0}$.   When  $t\rightarrow 0$, we say that the complex structures $J_{t}$ tends to a {\rm large complex limit}, while the symplectic structure $\omega_{J_{t}}$ is fixed, in the sense that its semi-flat SYZ mirror has the symplectic structures $\check{\omega}_{t}=t^{-\frac{1}{2}}\check{\omega}$ tending to a {\rm large radius limit} while keeping the complex structure $\check{J}$ fixed.

\subsection{Collapsing hyperk\"ahler metrics are close to semi-flat}
Now  we show  how   Theorem \ref{theorem1}, together with \cite{GTZ,GTZ2},   fits  into this refined version of SYZ conjecture  for hyperk\"{a}hler manifolds. The setup is now the same as in Theorem \ref{theorem1},  so $f:M^{2n}\to N^n\cong\mathbb{CP}^n$ is a holomorphic fiber space with $M$ projective hyperk\"ahler, with the extra assumption that there is a holomorphic Lagrangian  section $\sigma:N_0\to M_0$. We denote by $\Omega$ the holomorphic symplectic form on $M$, the fibration $f$ is then an algebraic completely integrable system over $N_0$, the complement of the discriminant locus of $f$ in $N$, and $[\alpha]$ is an integral K\"ahler class on $M$. The fibers $M_y=f^{-1}(y),y\in N_0$, are Abelian varieties, and the polarization $[\alpha_{y}]=[\alpha|_{M_{y}}]$ is of type $(d_1,\dots,d_n)$. As we mentioned in subsection \ref{4.2}, the existence of $\sigma$ implies that $M_0\cong T^{*(1,0)}N_0/\check{\Lambda}$, and let then $p: T^{*(1,0)}N_{0} \rightarrow M_{0}$ be the holomorphic covering map, which satisfies  $\Omega_{can}=p^{*}\Omega$, and $\check{\Lambda}=\ker p$.

 Let $[\alpha_{0}]$ be the  ample class on $N$ such that $$[\alpha]^{n}\cdot [f^{*}\alpha_{0}]^{n}= \int_{M}\Omega^{n}\wedge \bar{\Omega}^{n},  $$
  and $\tilde{\omega}_{t}$ be the unique Ricci-flat hyperk\"ahler metric on $M$ in the class $f^{*}[\alpha_{0}]+t[\alpha], 0<t\leq 1,$ which satisfies the complex Monge-Amp\`ere equation $$\tilde{\omega}_{t}^{2n}=c_{t}t^{n}\Omega^{n}\wedge \bar{\Omega}^{n} $$ with $c_{t} \rightarrow 1$ when $t\rightarrow 0$.
   Therefore, $(c_{t}^{-\frac{1}{2n}}t^{-\frac{1}{2}}\tilde{\omega}_{t}, \Omega)$ is a hyperk\"{a}hler structure, and
   we denote $\hat{g}_{t}$ the corresponding  hyperk\"{a}hler metric of $c_{t}^{-\frac{1}{2n}}t^{-\frac{1}{2}}\tilde{\omega}_{t}$.   By hyperk\"{a}hler rotation,  we have  a family of complex structures $\tilde{J}_{t}$ with hyperk\"{a}hler structures $$\omega_{\tilde{J}_{t}}={\rm Re}\Omega,  \  \  \ \Omega_{\tilde{J}_{t}}={\rm Im}\Omega +\sqrt{-1}c_{t}^{-\frac{1}{2n}}t^{-\frac{1}{2}}\tilde{\omega}_{t}.$$
    A well-known simple calculation shows that the fibration $f:M\rightarrow N$ becomes a  special Lagrangian fibration  with respect to $(\omega_{\tilde{J}_{t}}, \Omega_{\tilde{J}_{t}}^{n})$, and $\sigma$ becomes  a special Lagrangian section.

By \cite[Theorem 1.2]{GTZ} and $c_{t} \rightarrow 1$,  we have that by passing to  subsequences,  $(M, t^{\frac{1}{2}}\hat{g}_{t}, \tilde{\omega}_{t})$  converges to a compact metric space $(X, d_{X})$ in the Gromov-Hausdorff sense, and there is a locally isometric embedding $(N_{0}, \omega)\hookrightarrow (X, d_{X})$, and  \cite[Theorem 1.2]{GTZ2} asserts that $\omega$ is a special K\"{a}hler metric on $N_{0}$. Furthermore, Lemma 4.1 in  \cite{GTZ2} shows that
 $\omega $ is  the special K\"{a}hler metric induced by the algebraic   completely   integrable system $(f: M_{0} \rightarrow N_{0}, [\alpha], \Omega)$, where $M_{0}=f^{-1}(N_{0})$.
Now Theorem \ref{theorem1}  shows that $(X, d_{X})$ is the metric completion of $(N_{0}, \omega)$, it is homeomorphic to $\mathbb{CP}^n$, its singular set $X\backslash N_{0}$ has Hausdorff codimension at least $2$, and there is no need to pass to any subsequence in   the convergence, i.e.  $$(M,  t^{\frac{1}{2}}\hat{g}_{t}, \tilde{\omega}_{t})\rightarrow (X, d_{X}),
       \  \  {\rm when }\  \ t \rightarrow 0. $$  Furthermore, as predicted by \cite[Conjecture 2]{KS}, we claim that $t^{\frac{1}{2}}\hat{g}_{t}$ approaches some semi-flat metrics in a certain sense that we now explain.

As in \eqref{eq6.7}, for any $ t\in (0,1]$ we define a family of semi-flat  K\"{a}hler metrics  on $M_{0}=f^{-1}(N_0)$ by \begin{equation}\label{eq6}\omega_{\mathrm{SF},t}=t^{-\frac{1}{2}}f^{*}\omega+t^{\frac{1}{2}}\omega_{\rm SF},\end{equation}
where $\omega_{\rm SF}$ is given by Theorem \ref{semiflat}, and we denote by $g_{\mathrm{SF},t}$ the corresponding Riemannian  metric.
Following \cite{GTZ}, we define the dilation map  $\lambda_{t}: T^{*(1,0)}N_{0} \to T^{*(1,0)}N_{0}$ by $\lambda_{t}(y,z)=(y, t^{-\frac{1}{2}}z)$ and  the  covering map $p: T^{*(1,0)}N_{0} \rightarrow M_{0}$ such that  $\Omega_{can}=p^{*}\Omega$, as in subsection \ref{4.2}.  Thanks to Proposition \ref{conver} we have that $$\lambda_{t}^{*}p^{*} \tilde{\omega}_{t} \to p^{*}\omega_{\mathrm{SF},1},$$ smoothly on compact sets. Direct calculations show that  $$\lambda_{t}^{*}p^{*} \sqrt{t}\Omega = \Omega_{ can}, \  \  {\rm and} \ \ \lambda_{t}^{*}p^{*}t^{\frac{1}{2}} \omega_{\mathrm{SF},t} = \omega_{\mathrm{SF},1} $$  (cf. Section 4 in \cite{GTZ2}), which implies $$\|c_{t}^{-\frac{1}{2n}}\tilde{\omega}_{t} -t^{\frac{1}{2}}\omega_{\mathrm{SF},t}\|_{C^{\infty}_{\rm loc}(M_0,t^{\frac{1}{2}}g_{\mathrm{SF},t})}\rightarrow 0.$$

 Note that $(\omega_{\mathrm{SF},t}, \Omega, g_{\mathrm{SF},t})$, $ t\in (0,1]$,  is a family of semi-flat  hyperk\"{a}hler structures on $M_{0}$.
By hyperk\"{a}hler rotation, we define a family of complex structures $J_{t}$ with hyperk\"{a}hler structures $$\omega_{J_{t}}={\rm Re}\Omega=\omega_{\tilde{J}_{t}},  \  \  \ \Omega_{J_{t}}={\rm Im}\Omega+\sqrt{-1}\omega_{\mathrm{SF},t}.$$ By Lemma \ref{lem6.0}, the K\"{a}hler form  $\omega_{J_{t}}$ is induced by the canonical symplectic form on $T^{*}N_{0}$.

 From this discussion together with Theorem \ref{theorem1}, we obtain the following theorem:

\begin{theorem}\label{syzthm}In the above setup we have \begin{itemize}
    \item[i)]On $M_{0}$, when $t\rightarrow 0$, we have $\omega_{J_{t}}=\omega_{\tilde{J}_{t}}$,
     $$\|t^{\frac{1}{2}}(\hat{g}_{t}-g_{\mathrm{SF},t})\|_{C^{\infty}_{\rm loc}(M_0,t^{\frac{1}{2}}g_{\mathrm{SF},t})}\rightarrow 0, \  \  {\rm and} \ \ \|\tilde{J}_{t}-J_{t}\|_{C^{\infty}_{\rm loc}(M_0,t^{\frac{1}{2}}g_{\mathrm{SF},t})}\rightarrow 0.$$
      \item[ii)]     There is a special K\"{a}hler metric $\omega$ on $N_{0}$ such that the metric completion $\overline{(N_{0}, g)}$ is compact, and   $$(M, t^{\frac{1}{2}}\hat{g}_{t})\rightarrow \overline{(N_{0}, g)}, $$ in the Gromov-Hausdorff sense, where $ g$ denotes the corresponding Riemannian metric of $\omega$ on $N_{0}$.
       \item[iii)] The singular set $S_{N_{0}}=\overline{(N_{0}, g)}\backslash N_{0}$ has the  Hausdorff  codimension at least $2$.
       \item[iv)] The space $\overline{(N_{0}, g)}$ is homeomorphic to $\mathbb{CP}^n$.
         \item[v)]  $g$  is  a real  Monge-Amp\`ere  metric with respect to the real affine structure  determined by the special  K\"{a}hler metric $\omega$.
\end{itemize}
\end{theorem}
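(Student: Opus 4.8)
The plan is to assemble the statement from the results already in place, the only genuinely new ingredient being the semi-flat comparison in part i). For part i) I would pull everything back to $T^{*(1,0)}N_0$ via the covering map $p$ and the dilations $\lambda_t$. By definition $t^{\frac12}\hat{g}_t$ is the Riemannian metric of the K\"ahler form $c_t^{-\frac{1}{2n}}\ti{\omega}_t$ (with respect to the fixed complex structure of $M$), while $t^{\frac12}g_{\mathrm{SF},t}$ is that of $t^{\frac12}\omega_{\mathrm{SF},t}$; on compact subsets of $T^{*(1,0)}N_0$ we have the exact identity $\lambda_t^*p^*(t^{\frac12}\omega_{\mathrm{SF},t})=\omega_{\mathrm{SF},1}$, a fixed form, whereas Proposition \ref{conver} together with $c_t\to 1$ gives $\lambda_t^*p^*(c_t^{-\frac{1}{2n}}\ti{\omega}_t)\to p^*\omega_{\mathrm{SF},1}$ smoothly on compact sets. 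Hence the difference of the two K\"ahler forms, and therefore of the associated Riemannian metrics, tends to $0$ in $C^\infty_{\mathrm{loc}}$ measured against the fixed background $g_{\mathrm{SF},1}$; undoing the dilation this is exactly $\|t^{\frac12}(\hat{g}_t-g_{\mathrm{SF},t})\|_{C^\infty_{\mathrm{loc}}(M_0,t^{\frac12}g_{\mathrm{SF},t})}\to 0$. The equality $\omega_{J_t}=\omega_{\ti{J}_t}$ is immediate since both are $\Re\,\Omega$, and for $\ti{J}_t\to J_t$ I would use that hyperk\"ahler rotation expresses the rotated complex structure pointwise as an algebraic function of the triple $(\Re\,\Omega,\Im\,\Omega,\cdot)$, with third slot $c_t^{-\frac{1}{2n}}t^{-\frac12}\ti{\omega}_t$ for $\ti{J}_t$ and $\omega_{\mathrm{SF},t}$ for $J_t$; feeding the $C^\infty_{\mathrm{loc}}$ convergence of the K\"ahler forms into this algebraic expression and its derivatives yields $\|\ti{J}_t-J_t\|_{C^\infty_{\mathrm{loc}}}\to 0$.

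For parts ii), iii) and iv) I would invoke Theorem \ref{theorem1}. Since $c_t\to 1$, the rescaled metrics $t^{\frac12}\hat{g}_t$ (the Riemannian metrics of $c_t^{-\frac{1}{2n}}\ti{\omega}_t$) have the same Gromov--Hausdorff behaviour as $\ti{\omega}_t$, so by Theorem \ref{theorem1} they converge, with no need to pass to a subsequence, to the metric completion of $(N_0,\omega)$, which is compact, has singular set of Hausdorff codimension at least $2$, and is homeomorphic to $N$. That $N\cong\mathbb{CP}^n$ is \cite{Hw}, and that the limiting metric $\omega$ on $N_0$ is the special K\"ahler metric induced by the algebraic completely integrable system $(f:M_0\to N_0,[\alpha],\Omega)$ is \cite[Lemma 4.1]{GTZ2}. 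Writing $g$ for the Riemannian metric of $\omega$, this gives ii), iii) and iv) with $\overline{(N_0,g)}$ the Gromov--Hausdorff limit.

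Part v) follows from the local structure recalled in subsection \ref{4.1}: in flat Darboux coordinates $y_1,\dots,y_{2n}$ --- precisely the coordinates defining the real affine structure of the special K\"ahler manifold $N_0$ --- the metric is $g=\sum_{ij}\frac{\partial^2\phi}{\partial y_i\partial y_j}\,dy_i\,dy_j$ for the K\"ahler potential $\phi$, and the relation $\sqrt{\det(\partial_i\partial_j\phi)}\,dy_1\wedge\cdots\wedge dy_{2n}=\frac{1}{n!}\omega^n$ together with $\nabla\omega=0$ forces $\det(\partial_i\partial_j\phi)$ to be locally constant, i.e. $g$ is a real Monge--Amp\`ere metric for that affine structure. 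I expect the main (though still routine) obstacle to be making the $C^\infty_{\mathrm{loc}}$ statement about $\ti{J}_t$ fully precise: keeping careful track of which background metric the norms are taken with respect to, and verifying that the fibrewise hyperk\"ahler-rotation map, as a map of linear-algebra data, depends smoothly enough on its arguments for convergence of the K\"ahler forms to transfer to the complex structures at the level of all derivatives.
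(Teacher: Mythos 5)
Your proposal is correct and follows essentially the same route as the paper: part i) via Proposition \ref{conver} together with the scaling identities $\lambda_t^*p^*\sqrt{t}\,\Omega=\Omega_{can}$ and $\lambda_t^*p^*t^{\frac12}\omega_{\mathrm{SF},t}=\omega_{\mathrm{SF},1}$ (the paper likewise leaves the transfer from K\"ahler forms to the rotated complex structures as a pointwise algebraic consequence), parts ii)--iv) by combining Theorem \ref{theorem1} with \cite[Theorem 1.2]{GTZ} and \cite[Lemma 4.1]{GTZ2}, and part v) from the Monge--Amp\`ere property of special K\"ahler metrics in flat Darboux coordinates recalled in subsection \ref{4.1}. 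No gaps beyond the bookkeeping of scalings and background metrics, which you correctly flag and which the paper handles the same way.
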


Note that the semi-flat symplectic form $\omega_{J_{t}}$ on $M_{0}$ can be extended to a symplectic form on $M$, which equals $\omega_{\tilde{J}_{t}}$. However, the complex structure $J_{t}$ usually cannot be extended to a complex structure on $M$.  In order to extend $J_{t}$, one must add to it certain additional terms called  instanton corrections (these equal  $\tilde{J}_{t}-J_{t}$ in the present case), which  are determined by certain tropical geometric  objects on $N_{0}$  constructed inductively from the initial information of the singularities $S_{N_{0}}$. See \cite{Fuk} in the analytic setting, \cite{KS2,GS} in the algebro geometric setting, and \cite{GMN,N,KS3} for the current case of hyperk\"{a}hler manifolds.

Let us also remark that,   as shown in the last subsection,    the complex structures $J_{t}$ (therefore also $\tilde{J}_{t}$) tends to a  large complex limit, when  $t\rightarrow 0$, in the sense that its semi-flat SYZ mirror $\check{M}_{0}$  has the symplectic structures $\check{\omega}_{t}=t^{-\frac{1}{2}}\check{\omega}$ tending to a  large radius limit.    Furthermore,   we expect that $\check{\omega}_{t}$ extends to a symplectic form on  a certain compactification of $\check{M}_{0}$, if the SYZ mirrors of $M$ indeed exist, for example the case of Section 2 in \cite{GTZ}.

Lastly, let us mention that the conjecture of Gross-Wilson, Kontsevich-Soibelman and Todorov has directly inspired a purely algebro-geometric conjecture, which is as follows: let $\mathfrak{X}\to C$ be a projective family of Calabi-Yau manifolds over a quasiprojective curve, smooth over $C\backslash \{o\}$, such that $X_o$ is a large complex structure limit. After applying semistable reduction and a relative MMP, the dual intersection complex of the new central fiber is denoted by $Sk(\mathfrak{X})$, the essential skeleton of $\mathfrak{X}$. It is a connected $n$-dimensional simplicial complex, whose topological type does not depend on the choices we made. The conjecture is then that $Sk(\mathfrak{X})$ should topologically be an $n$-sphere when $\ti{\omega}_t$ (the Ricci-flat K\"ahler metric on $X_t$ in the polarization class) have holonomy $SU(n)$, and topologically $\mathbb{CP}^n$ when $\ti{\omega}_t$ are hyperk\"ahler. In particular, it should be homeomorphic to the Gromov-Hausdorff limit $(X,d_X)$ of the collapsing Ricci-flat K\"ahler metrics (normalized to have unit diameter). See \cite{BJ,MN,NX} for more details, and \cite{KLSV,KX} for very recent progress on these questions.

\end{document}